\newtheorem{theo}{{Theorem}}[section]
\newtheorem{coro}[theo]{{Corollary}}
\newtheorem{lemma}[theo]{{Lemma}}
\newtheorem{prop}[theo]{Proposition}
\newtheorem{rk}[theo]{{Remark}}
\theoremstyle{definition}
\newtheorem{defn}[theo]{Definition}
\renewcommand{\phi}{\varphi}
\newcommand{\QQ}{\mathbf{Q}}
\newcommand{\ZZ}{\mathbf{Z}}
\newcommand{\RR}{\mathbf{R}}
\newcommand{\cR}{\mathcal{R}}
\newcommand{\cH}{\mathcal{H}}
\newcommand{\cA}{\mathcal{A}}
\newcommand{\fA}{\mathfrak{A}}
\newcommand{\cZ}{\mathcal{Z}}
\newcommand{\fa}{\mathfrak{a}}
\newcommand{\fX}{\mathfrak{X}}
\newcommand{\fU}{\mathfrak{U}}
\newcommand{\fY}{\mathfrak{Y}}
\newcommand{\cO}{\mathcal{O}}
\newcommand{\cM}{\mathcal{M}}
\newcommand{\fM}{\mathfrak{M}}
\newcommand{\cF}{\mathcal{F}}
\newcommand{\CC}{\mathbf{C}}
\newcommand{\TT}{\mathbf{T}}
\newcommand{\GG}{\mathbf{G}}
\newcommand{\la}{\lambda}
\newcommand{\ra}{\rightarrow}
\newcommand{\lra}{\longrightarrow}
\DeclareMathOperator{\tr}{Tr}
\DeclareMathOperator{\Spec}{Spec}
\DeclareMathOperator{\Ker}{Ker}
\DeclareMathOperator{\Sp}{Sp}
\DeclareMathOperator{\Spf}{Spf}
\DeclareMathOperator{\End}{End}
\DeclareMathOperator{\Hom}{Hom}
\DeclareMathOperator{\GSp}{GSp}
\DeclareMathOperator{\GL}{GL}
\DeclareMathOperator{\Lie}{Lie}
\DeclareMathOperator{\Res}{Res}
\numberwithin{equation}{subsection}
\begin{document}
\author{Chung Pang Mok, Fucheng Tan}
\title{Overconvergent family of Siegel-Hilbert modular forms}

\date{}

\maketitle

\thispagestyle{plain}

 \begin{abstract}
 
We construct one-parameter families of overconvergent Siegel-Hilbert modular forms. This result has applications to construction of Galois representations for automorphic forms of non-cohomological weights, for instance, in the work  \cite{Mok}.
 
 \end{abstract}

\tableofcontents

 \section{Introduction}

The study of $p$-adic families of automorphic forms has been carried out in many works. In the case of elliptic modular forms, the overconvergent modular eigenforms of \emph{finite slope} (i.e. with non-zero Hecke eigenvalue at $p$) are interpolated to be points on a rigid analytic curve, which is known as the Coleman-Mazur eigencurve \cite{CM}.  Before this seminal work, the family of ordinary eigenforms was obtained by Hida \cite{Hi86}.  

Among all the approaches to the construction of eigenvarieties for more general algebraic groups, the work of Kisin-Lai \cite{KL} on overconvergent Hilbert modular forms is most closely related to ours.  Their method is a generalization of that of Coleman-Mazur. In both cases, the key point for interpolating modular forms is the \emph{complete continuity} (cf. P. 425 \cite{Co} for definition) of the Atkin-Lehner operator on certain spaces of overconvergent forms.  
In the case of elliptic modular forms,  Coleman-Mazur interpolate modular forms by twisting by $p$-adic analytic families of Eisenstein series. However, in the more general (Siegel-)Hilbert modular case such a theory of Eisenstein series is not yet available. Instead, one lifts (a certain power of) the Hasse invariant in characteristic $p$ to be a global section of certain automorphic line bundle over the integral model of the Shimura variety. 

We would like to mention certain differences between our method and that of \cite{KL}, which are mainly caused by the generality of the Siegel-Hilbert moduli space.

In the Hilbert modular case of \cite{KL}, they glue the toroidal compactfication of the Rapoport model \cite{Ra} with the Deligne-Pappas model \cite{DP}, because the Rapoport model may not be proper at the places which are ramified in the totally real field. Fortunately, Rapoport's toroidal compactification can be used because the Lie algebra condition,  which causes non-properness at finite distance, is automatic in the boundary. In the Siegel-Hilbert case, one has to do more to take care of the ramified places. There exists the canonical integral model of Pappas-Rapoport \cite{PR2} in the Siegel-Hilbert modular case, which has a moduli interpretation. Its toroidal compactifications are, however, not completely understood.  Fortunately, the (partial) toroidal compactifications and minimal compactification of the ordinary locus is  constructed in \cite{La12} successfully, which will be enough for our use.

Furthermore, we follow the idea of Hida \cite{Hi} to form the formal Igusa tower over the formal completion of the (compactified) moduli space with level structure away from $p$, instead of using the ``unramified $\Gamma_{00}(Np^n)$ cusps" in \cite{KL}.  This seems more convenient in the general Siegel-Hilbert case.

Write $G=\mathrm{Res}_{\cO_F/\ZZ}\GSp_{2g}$. The moduli space above is actually for its subgroup $G'=G\times_{\mathrm{Res}_{F/\QQ}\GG_m}\GG_m$. Finally, with these strategies and results, we construct one-dimensional families of eigenforms on $G'$,  for any totally real field $F$ and $g \geq 1$. More precisely, we obtain, for each classical weight $\kappa$, a reduced rigid analytic curve $\mathcal{E}_{\kappa}$, whose points are in one-to-one correspondence with systems of Hecke eigenvalues of overconvergent automorphic  forms on $G'$, whose weights ``differ" from that of $\kappa$ by parallel weights. One of the key properties of the rigid curve $\mathcal{E}_{\kappa}$ is that the canonical map to the weight space given by  weights of modular forms is, locally in the domain, finite flat. We refer the reader to Theorem \ref{curve} for more details.  

Essentially due to the (local) finite flatness of the weight map on $\mathcal{E}_{\kappa}$ (and the argument in Section \ref{des}),  we have 

\begin{theo}[Theorem \ref{accum}]\label{1}
Let $f$ be a (classical) Siegel-Hilbert modular eigenform on $G=\mathrm{Res}_{\cO_F/\ZZ}\GSp_{2g}$ of weight $\kappa$ with some tame level 
and level $p^n$ at $p$. For any positive integers $t$ with large enough $p$-adic valuation, there exist Siegel-Hilbert modular
eigenforms $f_t$ of the same level and of varying weights, whose Hecke eigenvalues converge $p$-adically to that of
$f$ when $t$ goes to zero $p$-adically.
\end{theo}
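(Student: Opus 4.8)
The plan is to obtain the theorem as an essentially formal consequence of the two inputs emphasized in the Introduction: the construction of the reduced rigid curve $\mathcal{E}_\kappa$ together with the \emph{local} finite flatness of its weight map $w\colon\mathcal{E}_\kappa\to\mathcal{W}$ (Theorem \ref{curve}), and the comparison between eigenforms on $G$ and on $G'$ carried out in Section \ref{des}. Here $\mathcal{W}$ denotes the one-dimensional weight space of parallel-weight twists of $\kappa$. First I would feed $f$ through the descent of Section \ref{des}: passing from $G$ to $G'$ attaches to $f$ a finite-slope overconvergent eigenform on $G'$ of weight $\kappa$, of the prescribed tame level and level $p^n$ at $p$, hence a classical point $x_0\in\mathcal{E}_\kappa$ lying over the weight $\kappa_0\in\mathcal{W}$ attached to the trivial parallel twist of $\kappa$ (we assume $f$ has finite slope, so that it defines a point of $\mathcal{E}_\kappa$ at all; if its level at $p$ is prime to $p$, replace $f$ by a finite-slope $p$-stabilization). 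By construction the relevant Hecke algebra $\mathbb{T}$ maps to $\cO(\mathcal{E}_\kappa)$, and the image $a_T\in\cO(\mathcal{E}_\kappa)$ of a Hecke operator $T$ satisfies $a_T(x_0)=$ (the $T$-eigenvalue of $f$).

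Next, by local finite flatness choose an affinoid neighbourhood $U\ni x_0$ in $\mathcal{E}_\kappa$ and an affinoid disc $V\ni\kappa_0$ in $\mathcal{W}$ with $w|_U\colon U\to V$ finite flat, so that $\cO(U)$ is a finite free $\cO(V)$-module. The classical weights $\kappa_t$ appearing in the statement are, by their very definition, parallel twists of $\kappa$ indexed by the positive integers $t$ and chosen so that $\kappa_t\to\kappa_0$ in $\mathcal{W}$ as $v_p(t)\to\infty$; fix $N_0$ so large that $\kappa_t\in V$ whenever $v_p(t)\ge N_0$. For such $t$ one degenerates the finite free $\cO(V)$-algebra $\cO(U)$ along $\kappa_t\to\kappa_0$: decomposing the fibre $\cO(U)\otimes_{\cO(V)}\CC_p$ at $\kappa_0$ into its local Artinian factors, lifting idempotents along the complete local ring of $V$ at $\kappa_0$, and isolating the factor $B_0$ (finite free over that complete local ring, with special fibre the local Artinian ring at $x_0$), one finds that the fibres of $\Spec B_0$ over $\kappa_t$ consist of nonempty finite families of $\CC_p$-points $x_t\in w|_U^{-1}(\kappa_t)$ that accumulate at $x_0$ as $v_p(t)\to\infty$. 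Since each $a_T$ is continuous on $U$, the full system of Hecke eigenvalues of the overconvergent eigenform $g_t$ on $G'$ attached to $x_t$ converges $p$-adically to that of $x_0$, hence to that of $f$. Moreover $g_t$ has weight $\kappa_t$, the level of $\mathcal{E}_\kappa$ (the same level as $f$), and $U_p$-slope $v_p(a_{U_p}(x_t))\to v_p(a_{U_p}(x_0))$, so the slopes of the $g_t$ are bounded independently of $t$.

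Finally, since $t$ is a positive integer with $v_p(t)\ge N_0$ it is in particular $\ge p^{N_0}$, so $\kappa_t$ is a parallel twist of $\kappa$ by a large integer and lies well inside the classical (dominant) range of $\mathcal{W}$; with the $U_p$-slope of $g_t$ bounded while the weight $\kappa_t$ grows, the small-slope classicality criterion for overconvergent Siegel-Hilbert modular forms applies and shows that $g_t$ is classical. Running the comparison of Section \ref{des} in reverse then attaches to $g_t$ a classical Siegel-Hilbert eigenform $f_t$ on $G$ of the same level and of weight $\kappa_t$, whose Hecke eigenvalues are those of $g_t$ at the operators tracked by $\mathcal{E}_\kappa$ and hence converge $p$-adically to those of $f$ as $t\to0$ $p$-adically. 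The genuinely hard content is already packaged into the two quoted inputs — the local finite flatness of the weight map (Theorem \ref{curve}) and the $G$-versus-$G'$ descent of Section \ref{des}; beyond these, the $p$-adic bookkeeping with the finite flat algebra $\cO(U)$ is routine, and the point I would expect to require the most care is the small-slope classicality criterion in the Siegel-Hilbert setting (together with checking that the parallel twists $\kappa_t$ really remain dominant classical weights).
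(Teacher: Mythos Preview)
Your approach has a genuine gap at exactly the step you flag as delicate: you invoke a small-slope classicality criterion for overconvergent Siegel--Hilbert modular forms, but no such criterion is proved or cited in this paper, and for $g>1$ over a general totally real field it was not available. Your argument produces points $x_t\in\mathcal{E}_\kappa$ accumulating at $x_0$, hence \emph{overconvergent} eigenforms $g_t$ whose Hecke eigenvalues converge to those of $f$; but the statement demands \emph{classical} eigenforms $f_t$, and the passage from $g_t$ to a classical form is the whole difficulty. Bounded slope and large dominant weight are not enough without a classicality theorem to appeal to.

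The paper sidesteps this entirely. Rather than finding points on $\mathcal{E}_\kappa$ and then arguing they are classical, it starts from the classical form $E^t\cdot f$ (where $E$ is the fixed characteristic-zero lift of a power of the Hasse invariant, itself a classical form of parallel weight $(p-1)k_0$), decomposes this in the finite-dimensional space of classical forms of weight $\kappa\cdot\mathrm{Nm}^{(p-1)k_0t}$ into classical eigenforms, and uses an explicit idempotent $e$ built from $U_{(p)}$ and the local structure of the Fredholm hypersurface $\mathcal{Z}_\alpha$ to show that at least one of these classical eigen-constituents has its eigensystem landing in the component $\mathcal{Z}_0$ through $r_\alpha(x)$. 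Proposition \ref{sep} (the diagram relating specialization at $Y\mapsto (p-1)k_0t$ to multiplication by $E^t$) is what guarantees $e(E^t\cdot f)\neq 0$ for $v_p(t)\gg 0$. Classicality is thus built in from the start, and no control theorem is needed.

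A secondary point: Section \ref{des} is not about descent between $G$ and $G'$; it is a refinement of Theorem \ref{accum} controlling the Bernstein component of $f_t$ at a fixed prime $v\nmid p$. The $G$ versus $G'$ issue is handled by the remark in Section 4.2 (taking $U_F^+/U_{F,N}^2$-invariants), and plays no real role in the proof.
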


This theorem is sufficient for some applications. For example, Theorem \ref{1} is one of the main ingredients for attaching Galois representations to automorphic forms $\pi$ on $\GL_2$ over arbitrary CM fields,  the work of one of us \cite{Mok}. More precisely, in order to construct such a $2$-dimensional representation, one first lifts $\pi$ to an automorphic form $\Pi$ (of non-cohomological type!) on $\GSp_{4}(\mathbf{A}_F)$. Then the Galois representation $\rho_{\Pi}$ associated to $\Pi$ is obtained by interpolating Galois representations associated to forms on $\GSp_{4}(\mathbf{A}_F)$ of cohomological type, with the family of cohomological forms supplied by Theorem \ref{1}. As is mentioned in \cite{Mok}, the use of $p$-adic analytic family of automorphic forms, compared to the use of congruence relations between them, has the advantage that this (less elementary) method allows us to prove local-global compatibility.

We would like to mention that  the eigenvariety for Siegel modular forms (not necessarily with parallel weights) was recently developed in \cite{AIP}, for $F=\QQ$.  Our result, on the other hand, applies to all totally real fields.

 The paper is organized as follows. 
 
 In Section $2$, we recall the results on  integral models of PEL Shimura varieties and their compactificatoins.  In the next section,  we  use the idea of Hida to form the formal Igusa tower. In the last section, we form the spaces of overconvergent Siegel-Hilbert modular eigenforms and then prove that the $U_{(p)}$-operator  is completely continuous on the spaces. Finally by the machinery in \cite{CM}, we construct the rigid curves interpolating these overconvergent forms, and then show Theorem \ref{1}.

\section*{Notation}

\begin{itemize}

\item $F$ is a totally real field of degree $d$ over $\QQ$, and $\cO=\cO_F$ is the ring of integers. We denote by $\mathbf{A}=\mathbf{A}_F$ the ring of adeles of $F$, and by $\mathbf{A}_{f}$ the ring of finite adeles of $F$.   

\item For a maximal torus $T=T_{G}$ of a reductive group $G$ over $\ZZ$, $\mathrm{Nm}: \mathrm{Res}_{\ZZ}^{\cO}T  \ra T$ is the norm map, i.e. for any ring $R$, $\mathrm{Nm}(R): T(\cO\otimes_{\ZZ}R)\ra T(R)$ is given by the norm $N_{F/\QQ}$ on $F$. 

\item $p\geq 2$ is a fixed rational prime.

\item If $K/\QQ_p$ is a finite extension, $K_0$ is the maximal unramified extension of $\QQ_p$ in $K$, and $[K:K_0]=e$. $\bar{\QQ}_p$ is a fixed algebraic closure of $K$, and $\CC_p$ is the completion of $\bar{\QQ}_p$ for the $p$-adic topology.

\item If $H \subset G(\mathbf{A}_f)$ denotes an open compact subgroup, then we write it in the form $H = H_pH^p$, where $H_p \subset G(\QQ_p)$,  $H^p \subset G(\mathbf{A}_f^p)$, for $\mathbf{A}_f^p$ the ring of finite adeles over $F$ with trivial $p$-component.

\end{itemize}

 \section{Siegel-Hilbert moduli spaces}

 \subsection{PEL datum}

 \subsubsection{The general integral PEL data}
 
 We recall the (integral) PEL datum $(\cO_B,*, L, \psi, h)$, whose rational part $(B,*,L_{\QQ}, \psi_{\QQ},h)$ can give rise to a Shimura datum  by 4.1 \cite{Ko}.
 
 \begin{itemize}
\item  $B$ is a finite dimensional  semisimple $\QQ$-algebra whose center is denoted by $F$, and is equipped with a positive involution $\ast$: \[ (ab)^{*}=b^*a^*, b^{**}=b, \quad \forall a,b\in B,\] \[\tr_{B/\QQ}(bb^*)>0, \quad \forall b\neq 0.\] $\cO_B$ is an order of $B$ stabilized by the involution above. 
 
\item  $(L,\psi)$ is a symplectic $(\cO_B,*)$-module over $\ZZ$, i.e. $L$ is a finite free $\ZZ$-module carrying an alternating form $\psi: L\times L\ra \ZZ$, such that \[\psi(bx,y)=\psi(x,b^*y), \qquad \forall x,y\in L, b\in \cO_B.\]

Let  $G$ be the group over $\ZZ$ so that for any $\ZZ$-algebra $R$, 
 \[G(R)=\{g\in \GL_{(\cO_B)_R}(L_R)|\psi(gx,gy)=\nu(g)\psi(x,y), \nu(g)\in R\}.\]

 \item Let \[\tilde{h}: \CC\ra \End_{(\cO_B)_{\RR}}(L_{\RR})\] be an $\RR$-algebra homomorphism  that gives a Hodge structure of type $(1,0),(0,1)$ on $L_{\RR}$, such that $\psi(x,\tilde{h}(\sqrt{-1})y)$ is a symmetric positive definite bilinear form on $L_{\RR}$. The restriction $\tilde{h}|_{\CC^{\times}}$ can be viewed as a morphism of $\RR$-algebraic groups \[h: \Res_{\CC/\RR}\mathbf{G}_{m,\CC}\lra G_{\RR}.\]

 \end{itemize}

 The action of $h$ gives a decomposition
 \begin{equation}\label{v0}
 L_{\CC}=V_{0,\CC}\oplus V_{1,\CC}
 \end{equation} where $h$ acts on the first factor by the character $z\mapsto \bar{z}$ and on the second one by $z\mapsto z$. The Shimura field is then by definition 
 \[E=F[\tr_{\CC}(b|V_{0,\CC}),b\in B].\] The decomposition (\ref{v0}) is then defined over the subfield $E$ of $\CC$.

  \subsubsection{PEL data for symplectic groups}\label{pelsym}

  Let $B=F$ be a totally real field of degree $d$. Let $\cO_B=\cO$ and $*=\mathrm{Id}$ be the trivial involution.  Let $L$ be a finite free $\ZZ$-module of rank $2dg$ equipped with an $\cO$-module structure, together  with the standard symplectic form \[\phi: L\times L\ra \cO\] given by the  antisymmetric matrix $J=\left(
                       \begin{array}{cc}
                           & -I_{dg} \\
                          I_{dg} &   \\
                       \end{array}
                     \right)$.
Set \[\psi=\mathrm{Tr}_{\cO/\ZZ}\circ\phi.\]             

The $\CC$-algebra homomorphism $\tilde{h}$ is \[a+bi\mapsto \begin{pmatrix}
aI_{dg}&-bI_{dg}\\
bI_{dg}& aI_{dg}\end{pmatrix}.\]

We have the PEL datum $(\cO, \mathrm{Id}, L, \psi,h=\tilde{h}|_{\CC^{\times}})$. In this case \[G=\mathrm{Res}_{\cO/\ZZ}\GSp_{2g},\] where $\GSp_{2g}$ is the
split reductive group of symplectic similitudes respecting the matrix $J$.                     
                     
The Shimura field in this case is $E=\QQ$.    
 
\subsection{The Siegel-Hilbert moduli space over the Shimura field}

Keep the Shimura datum $(\cO, \mathrm{Id}, L, \psi,h)$ as above.  Let $H\subset G(\hat{\ZZ})$ be an open compact subgroup. We recall the moduli problem from Section 5 \cite{Ko} and 1.4.1.4 \cite{La}.

Let $\cM_H$ be the functor that assigns to a $\QQ$-scheme $S$ the isomorphism classes of the tuples $(A, i, \la, \alpha_H)$ of the following kind
 
\begin{itemize}
\item $A$ is an abelian scheme over $S$ of relative dimension $dg$, equipped with an $\cO$-action called the \emph{real multiplication}: $i: \cO\hookrightarrow \mathrm{End}_S(A)$.

\item The requirement of Kottwitz determinant condition \[\mathrm{det}_{\cO_S}(b|\Lie A)=\mathrm{det}_{E}(b|V_0), \quad\forall b\in F \]as \emph{polynomial functions}, for which both sides of the equality are considered as morphisms of $S$-schemes.  (cf. Section 5 \cite{Ko}  for details.)

\item $\la: A\ra A^{\vee}$ is a polarization. Recall that a symmetric homomorphism $A\ra A^{\vee}$ is a \emph{polarization} if (locally for the \'etale topology) it comes from a line bundle over $A$ which is ample over $S$. (cf. 6.2 \cite{GIT}.)

\item $\alpha_H$ is an $H$-level structure of type $(L_{\hat{\ZZ}}, \psi)$ analogous to that  defined in Sec. \ref{leveln}. (cf. 1.3.7.6 \cite{La} for more details.) 
\end{itemize}

The functor $\cM_H$ is represented by a separated smooth algebraic stack of finite type over $E=\QQ$, by Artin's theory and Grothendieck's theory of Hilbert schemes. We denote the moduli stack by $\cM_H$ again. If $H$ is neat, then $\cM_H$ is  a smooth quasi-projective  scheme over $\QQ$, by \cite{GIT} (and the theory of Hilbert schemes). As a special case of the construction of $\cM_H$, we have the functor $\cM_{H^p}$ with the level structure $\alpha_H$ being the prime to $p$ level structure on $H^p$. 

We denote the universal abelian scheme over $\cM_H$ by $\cA$, and denote by $\omega$ the pull-back along the unit section  of the relative differentials $\Omega_{\cA/\cM_H}^1$. 
 
\begin{rk}\label{1118}
Let $X$ denote the $G(\RR)$-conjugacy classes of $\tilde{h}$. The complex manifold $G(\QQ)\backslash X\times G(\mathbf{A}^{\infty})/H$ descents to a quasi-projective scheme $Sh_H$ over $\QQ$, which is commonly called the Shimura variety. We have a canonical open and closed immersion \[Sh_H\hookrightarrow [\cM_H]\] of the Shimura variety into the coarse moduli space of the algebraic stack $\cM_H$.  The moduli $[\cM_H]$ is in fact the Shimura variety for the group $G'=G\times_{\mathrm{Res}_{F/\QQ}\GG_m}\GG_m$, the subgroup of $G$ whose determinants lie in $\GG_m$. 
 \end{rk}

 \subsection{Integral models and compactifications}\label{moduli2}
 
 In \cite{La12}, Lan constructs  a normal and flat algebraic stack $\vec{\cM}_H$ over $\ZZ_{(p)}$ which comes with a canonical isomorphism 
 \[\vec{\cM}_H\times_{\Spec\ZZ_{(p)}}\Spec\QQ\simeq \cM_H.\] We recall the construction briefly.

One first  finds an auxiliary  Shimura datum which can provide the canonical integral model and toroidal compactification. In fact, one can embed the $\ZZ$-module $L$ into another finite free $\ZZ$-module $L_{\rm aux}$ which comes with an alternating pairing $\psi_{\rm aux}$ whose restriction to $L$ is $\psi$. The $\RR$-algebra homomorphism $\tilde{h}$ then induces another $\RR$-algebra homomorphism $\tilde{h}_{\rm aux}$, whose restriction to $\CC^{\times}$ is denoted by $h_{\rm aux}$. Moreover, we have a subring $\cO_{\rm aux}\subset\cO$ for which the embedding $L\hookrightarrow L_{\rm aux}$ is $\cO_{\rm aux}$-linear.  The point is that, for the auxiliary Shimura datum $(\cO_{\rm aux}, \mathrm{Id},L_{\rm aux}, \psi_{\rm aux}, h_{\rm aux})$, the prime $p$ is a \emph{good} prime to which the main results of \cite{La} apply.

Now we have an induced homomorphism of algebraic groups over $\ZZ$ \[t: G\lra G_{\rm aux}\] where the second group is defined by the auxiliary Shimura datum in the same way as before. The auxiliary Shimura datum provides a moduli stack $\cM_{G_{\rm aux}(\hat{\ZZ}^p)}$ which is separated smooth and of finite type over $\ZZ_{(p)}$. By the fact that $p$ is a good prime for $\cM_{G_{\rm aux}(\hat{\ZZ}^p)}$, one can show that there is a canonical isomorphism \[\cM_{G_{\rm aux}(\hat{\ZZ}^p)\times G_{\rm aux}(\ZZ_p)}\simeq \cM_{G_{\rm aux}(\hat{\ZZ}^p)}\otimes_{\ZZ_{(p)}}\QQ.\] 

More generally, for any open compact subgroup $H_{\rm aux}=H_{\rm aux}^pG_{\rm aux}(\ZZ_p)\subset G_{\rm aux}(\hat{\ZZ})$ such that $H^p$ is mapped to $H_{\rm aux}^p$ under the morphism $t: G(\ZZ^p)\ra G_{\rm aux}(\ZZ^p)$, we have similarly a moduli stack $\cM_{H_{\rm aux}^p}$ for which $p$ is  a good prime and a morphism 
\begin{equation}\label{418}
\cM_{H}\ra \cM_{H_{\rm aux}^p}\otimes_{\ZZ_{(p)}}\QQ,\end{equation}
compatible with the map between the two PEL data, which is finite on the coarse moduli spaces.  
 
 \begin{prop}
 
 The normalization $\vec{\cM}_H$ of $\cM_{H_{\rm aux}^p}$ in $\cM_H$ is a normal flat algebraic stack over $\ZZ_{(p)}$ whose generic fibre is canonically isomorphic to $\cM_H$.  The normalization of $[\cM_{H_{\rm aux}^p}]$ in $[\cM_H]$ under the map of coarse moduli spaces  induced by (\ref{418}) is canonically isomorphic to $[\vec{\cM}_H]$, which is a quasi-projective normal flat scheme over $\ZZ_{(p)}$. Hence $\vec{\cM}_H\simeq [\vec{\cM}_H]$ is a scheme if $H$ is neat.
  
 \end{prop}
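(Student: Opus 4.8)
The plan is to deduce everything from a single nontrivial input --- that for the auxiliary datum $p$ is a \emph{good} prime, so that $\cM_{H_{\rm aux}^p}$ is \emph{smooth}, hence normal and flat, over $\ZZ_{(p)}$ by the cited results of \cite{La} --- together with the formalism of relative normalization. So I would define $\vec{\cM}_H$ to be the relative normalization of $\cM_{H_{\rm aux}^p}$ in $\cM_H$ along the morphism (\ref{418}); this is legitimate because (\ref{418}) is finite, so \'etale-locally on $\cM_{H_{\rm aux}^p}$ one is normalizing an affine normal scheme in a finite algebra over its generic fibre. Since $\ZZ_{(p)}$ is excellent, $\cM_{H_{\rm aux}^p}$ is Nagata, so the relative normalization $\vec{\cM}_H \to \cM_{H_{\rm aux}^p}$ is \emph{finite}; thus $\vec{\cM}_H$ is a separated algebraic stack of finite type over $\ZZ_{(p)}$, and it is normal because $\cM_H$ is (being smooth over $\QQ$).

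Flatness and the generic fibre are then immediate. As $\ZZ_{(p)}$ is a discrete valuation ring, $\ZZ_{(p)}$-flatness is just $p$-torsion-freeness, and by construction $\cO_{\vec{\cM}_H}$ is a subsheaf of the pushforward of $\cO_{\cM_H}$, a sheaf of $\QQ$-algebras on which $p$ is invertible; hence $\vec{\cM}_H$ is $\ZZ_{(p)}$-flat. Relative normalization commutes with the flat localization $\ZZ_{(p)} \to \QQ$, so $\vec{\cM}_H \times_{\Spec\ZZ_{(p)}}\Spec\QQ$ is the relative normalization of $\cM_{H_{\rm aux}^p}\otimes_{\ZZ_{(p)}}\QQ$ in $\cM_H$, which equals $\cM_H$ itself since $\cM_H$ is already finite over $\cM_{H_{\rm aux}^p}\otimes\QQ$ via (\ref{418}); this gives the asserted canonical isomorphism with $\cM_H$.

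For the coarse spaces, $[\cM_{H_{\rm aux}^p}]$ is a quasi-projective $\ZZ_{(p)}$-scheme (GIT, as in \cite{La}), normal because it is \'etale-locally a quotient of the normal scheme $\cM_{H_{\rm aux}^p}$ by a finite group and invariants of a normal ring are normal, and flat over $\ZZ_{(p)}$. Let $Z$ be the relative normalization of $[\cM_{H_{\rm aux}^p}]$ in $[\cM_H]$: by the same reasoning it is a normal, flat, quasi-projective $\ZZ_{(p)}$-scheme, finite over $[\cM_{H_{\rm aux}^p}]$, with generic fibre $[\cM_H]$. The canonical factorization $\cM_H \to \vec{\cM}_H \to \cM_{H_{\rm aux}^p}$ descends to coarse spaces, and the universal property of relative normalization yields a canonical morphism $[\vec{\cM}_H] \to Z$ over $[\cM_{H_{\rm aux}^p}]$ compatible with the maps from $[\cM_H]$. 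This morphism is finite (a morphism of schemes both finite over $[\cM_{H_{\rm aux}^p}]$) and restricts to the identity on generic fibres; since both sides are normal and $\ZZ_{(p)}$-flat, so that the generic fibre is schematically dense, it is an isomorphism, because a finite birational morphism onto a normal scheme is an isomorphism. Hence $[\vec{\cM}_H]\simeq Z$ is a normal flat quasi-projective $\ZZ_{(p)}$-scheme. Finally, if $H$ is neat one may arrange the auxiliary datum so that $H_{\rm aux}^p$ is neat as well, in which case $\cM_{H_{\rm aux}^p}$ is a quasi-projective scheme; then $\vec{\cM}_H$, being finite over it, is also a scheme, and therefore coincides with its coarse space $[\vec{\cM}_H]\simeq Z$.

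I expect the delicate points to be bookkeeping rather than conceptual: making the relative normalization of these algebraic stacks legitimate and \emph{finite} (which rests on excellence of $\ZZ_{(p)}$-stacks of finite type), and the compatibility $[\vec{\cM}_H]\simeq Z$ between normalizing at the stack level and at the coarse-space level, for which the finite-birational-morphism argument above is the key. One should also check that (\ref{418}) meets exactly the components of $\cM_{H_{\rm aux}^p}\otimes\QQ$ that it ought to, so that the normalization neither loses nor creates components.
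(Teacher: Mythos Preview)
The paper does not actually prove this proposition: it is stated as a recollection of results from \cite{La12} (see the opening of Section~\ref{moduli2}, ``We recall the construction briefly''), and no proof environment follows the statement. Your argument supplies precisely the standard justification that underlies Lan's construction --- relative normalization over an excellent base, hence finite; $p$-torsion-freeness giving flatness; compatibility with the flat localization $\ZZ_{(p)}\to\QQ$ giving the generic fibre; and a finite-birational argument to match the coarse-space normalization with the coarse space of the normalized stack --- and it is correct.

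One minor remark on the final step. You deduce that $\vec{\cM}_H$ is a scheme when $H$ is neat by arranging $H_{\rm aux}^p$ to be neat, so that $\cM_{H_{\rm aux}^p}$ is already a scheme and anything finite over it is too. This is fine, but note the paper's implicit logic is slightly different: $[\vec{\cM}_H]$ is shown to be a quasi-projective scheme \emph{regardless} of neatness (via the coarse moduli $[\cM_{H_{\rm aux}^p}]$, which is always a scheme by GIT), and then neatness of $H$ ensures $\cM_H$ --- hence $\vec{\cM}_H$ --- has trivial inertia, so the stack coincides with its coarse space. Either route works; yours requires the extra (true, but worth noting) observation that one can shrink $H_{\rm aux}^p$ to be neat while still containing the image of $H^p$ under $t$.
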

 
  From now on, we always assume $H$ is neat.

Let $\cM_H^{\rm tor}$ be the toroidal compactification of $\cM_H$ for a fixed admissible smooth rational polyhedral cone decomposition datum $\Sigma$ for $\cM_H$. 

\begin{prop}\label{cptchar0}

(1) There is an admissible smooth rational polyhedral cone decomposition datum $\Sigma_{\rm aux}$ for $\cM_{H_{\rm aux}^p}$ (hence the toroidal compactification $\cM_{H_{\rm aux}^p}^{\rm tor}$ of  $\cM_{H_{\rm aux}^p}$), which is compatible with $\Sigma$ in a natural way, and induces a canonical morphism  
\begin{equation}\label{4225}
\cM_H^{\rm tor}\lra \cM_{H_{\rm aux}^p}^{\rm tor}\otimes_{\ZZ_{(p)}}\QQ,
\end{equation}which is compatible with the stratifications on both sides (in particular, extending (\ref{418})) and the pull-back of universal objects. 

(2) Let $\cM_{H}^{\rm min}$ and $\cM_{H_{\rm aux}^p}^{\rm min}$ be the corresponding minimal compactifications.  Then the morphism (\ref{4225}) induces a natural morphism \[\cM_H^{\rm min}\lra \cM_{H_{\rm aux}^p}^{\rm min}\otimes_{\ZZ_{(p)}}\QQ,\]which is compatible with the stratifications on both sides.  The normalization $\vec{\cM}_H^{\rm min}$ of $\cM_{H_{\rm aux}^p}^{\rm min}$ in $\cM_H^{\rm min}$ is a projective normal flat scheme over $\ZZ_{(p)}$ whose generic fibre is canonically isomorphic to $\cM_H^{\rm min}$. It contains $\vec{\cM}_H$ as an open dense subscheme.  
 
(3) In the case that $\Sigma$ is projective, there is an integral model $\vec{\cM}_H^{\rm tor}$ for the toroidal compactication $\cM_H^{\rm tor}$, which is by construction the normalization of the blow-up of certain coherent ideal sheaf on $\vec{\cM}_H^{\rm min}$. It is a projective normal flat scheme over $\ZZ_{(p)}$, such that $\vec{\cM}_H^{\rm tor}\otimes_{\ZZ_{(p)}}\QQ\simeq \cM_H^{\rm tor}$ in a canonical way. If $H'\subset H$ is an open compact subgroup, then there is a canonical map $\vec{\cM}_{H'}^{\rm tor}\ra \vec{\cM}_H^{\rm tor}$,  compatible with the canonical map $\cM_{H'}\ra \cM_H$. 

 \end{prop}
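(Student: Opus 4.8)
The plan is to obtain all three parts from the functoriality of Lan's toroidal and minimal compactifications (\cite{La}, \cite{La12}) applied to the morphism $t\colon G\to G_{\rm aux}$ and the induced morphism (\ref{418}) of PEL moduli; the only genuinely new point is to arrange the auxiliary data so that the hypotheses of that functoriality apply. For part (1), recall that a toroidal compactification is determined by an admissible smooth rational polyhedral cone decomposition datum, that is, a collection of cone decompositions indexed by the cusp labels and compatible under the natural maps between them. The morphism $t$ induces a map of cusp labels for $\cM_H$ and $\cM_{H_{\rm aux}^p}$ together with a compatible linear map of the associated rational polyhedral cone complexes. I would choose $\Sigma$ and $\Sigma_{\rm aux}$ together -- fix a smooth admissible $\Sigma_{\rm aux}$ first and then take $\Sigma$ to be a smooth refinement of the pullback of $\Sigma_{\rm aux}$ along $t$ -- so that every cone of $\Sigma$ maps into a cone of $\Sigma_{\rm aux}$; the existence of such compatible decompositions is one of the standard combinatorial facts underlying \cite{La}. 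With this choice, the extension property of the toroidal compactification $\cM_H^{\rm tor}$ over $\QQ$ -- the input being that $\cM_H^{\rm tor}$ carries the universal degenerating family extending $\cA$ and that (\ref{418}) is compatible with the two PEL structures -- produces the morphism (\ref{4225}), and Lan's functoriality package guarantees that it is compatible with the stratifications and pulls back the universal degenerating objects (semi-abelian schemes with their $\cO$-action, polarizations, and prime-to-$p$ level structures).

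For part (2), the minimal compactification is $\Proj\bigoplus_{k\geq0}\Gamma(\cM_H^{\rm tor},\omega^{\otimes k})$, where $\omega$ is the Hodge bundle, and likewise on the auxiliary side. The morphism (\ref{4225}) pulls $\omega_{\rm aux}$ back to a suitable power of $\omega$ -- compatibly with $L\hookrightarrow L_{\rm aux}$ -- hence induces a morphism $\cM_H^{\rm min}\to\cM_{H_{\rm aux}^p}^{\rm min}\otimes_{\ZZ_{(p)}}\QQ$, compatible with the stratifications by part (1) and finite because (\ref{418}) and the induced map on the finitely many boundary strata are. Now $\cM_{H_{\rm aux}^p}^{\rm min}$ is a projective normal flat $\ZZ_{(p)}$-scheme since $p$ is a good prime for the auxiliary datum (\cite{La}). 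Therefore its normalization $\vec{\cM}_H^{\rm min}$ in $\cM_H^{\rm min}$ is finite over it, hence projective over $\ZZ_{(p)}$; it is normal by construction; and it is flat over the discrete valuation ring $\ZZ_{(p)}$ because it is reduced and each of its irreducible components surjects onto a (flat) component of $\cM_{H_{\rm aux}^p}^{\rm min}$. Its generic fibre is $\cM_H^{\rm min}$, since $\cM_H^{\rm min}$ is already normal and finite over $\cM_{H_{\rm aux}^p}^{\rm min}\otimes_{\ZZ_{(p)}}\QQ$ and hence is its own normalization; and since $\cM_{H_{\rm aux}^p}$ is open in $\cM_{H_{\rm aux}^p}^{\rm min}$ while $\cM_H$ is open dense in $\cM_H^{\rm min}$, the part of $\vec{\cM}_H^{\rm min}$ lying over $\cM_{H_{\rm aux}^p}$ is exactly $\vec{\cM}_H$, and it is dense.

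For part (3), when $\Sigma$ is projective, $\cM_H^{\rm tor}\to\cM_H^{\rm min}$ is the blow-up of a coherent ideal sheaf $\mathcal{J}$ on $\cM_H^{\rm min}$ (\cite{La}). I would take for $\vec{\mathcal{J}}$ the coherent ideal sheaf on $\vec{\cM}_H^{\rm min}$ supplied by \cite{La12} -- equivalently, since $\ZZ_{(p)}$ is a discrete valuation ring and $\cM_H^{\rm min}$ is the open generic fibre of $\vec{\cM}_H^{\rm min}$, one may take the intersection of $\mathcal{O}_{\vec{\cM}_H^{\rm min}}$ with the pushforward of $\mathcal{J}$ -- and define $\vec{\cM}_H^{\rm tor}$ to be the normalization of $\mathrm{Bl}_{\vec{\mathcal{J}}}\,\vec{\cM}_H^{\rm min}$. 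This is projective over $\ZZ_{(p)}$ (a blow-up followed by a finite morphism, over a projective scheme), normal by construction, and flat over $\ZZ_{(p)}$ for the same reason as in part (2); and since blow-up commutes with the flat base change $\ZZ_{(p)}\to\QQ$ and the generic fibre is already normal, $\vec{\cM}_H^{\rm tor}\otimes_{\ZZ_{(p)}}\QQ\simeq\mathrm{Bl}_{\mathcal{J}}\cM_H^{\rm min}\simeq\cM_H^{\rm tor}$ canonically. For $H'\subset H$ one chooses $\Sigma'$ refining the pullback of $\Sigma$; the resulting maps $\cM_{H'}^{\rm tor}\to\cM_H^{\rm tor}$ and $\cM_{H'}^{\rm min}\to\cM_H^{\rm min}$ are compatible with the ideal sheaves, so the universal property of the blow-up together with the normality of $\vec{\cM}_{H'}^{\rm tor}$ yields the canonical map $\vec{\cM}_{H'}^{\rm tor}\to\vec{\cM}_H^{\rm tor}$ over $\cM_{H'}\to\cM_H$.

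The main obstacle is part (1): one must match the combinatorial boundary data (cusp labels and cone complexes) of the two PEL data under $t$ and verify that the resulting morphism of toroidal compactifications respects all the degeneration data, not merely the morphism of open Shimura varieties. All of this is contained in the functoriality results of \cite{La} and \cite{La12}, so the real content is checking that their hypotheses -- admissibility and smoothness of the chosen $\Sigma_{\rm aux}$, compatibility of level structures at $p$ under $t$, and neatness of $H$ and $H_{\rm aux}$ -- hold in the case at hand; once that is done, parts (2) and (3) are formal manipulations with normalizations and blow-ups over the discrete valuation ring $\ZZ_{(p)}$.
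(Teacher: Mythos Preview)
Your proposal is correct and matches the paper's approach: the paper states this proposition without proof, treating it as a summary of results from \cite{La12} (and \cite{La}), and your sketch correctly identifies that all three parts follow from Lan's functoriality package for toroidal and minimal compactifications applied to the morphism $t\colon G\to G_{\rm aux}$, together with standard properties of normalization and blow-up over the discrete valuation ring $\ZZ_{(p)}$. Your discussion of the combinatorial matching of cone data and the formal manipulations in parts (2) and (3) is exactly the content being invoked.
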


 For the integral model $\vec{\cM}_{H^p}$ with prime to $p$ level, we have the following stronger result:

\begin{theo}[\cite{PR2}]\label{prmod}
The canonical map $\vec{\cM}_{H^p}\ra \cM_{H_{\rm aux}^p}$ is a closed embedding.

In particular, we have a moduli interpretation for $\vec{\cM}_{H^p}$, with PEL data as part of the moduli problem. 
\end{theo}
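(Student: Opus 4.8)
The plan is to reduce the statement to two facts about the canonical morphism $f\colon\vec{\cM}_{H^p}\ra\cM_{H_{\rm aux}^p}$: that $f$ is finite, and that $f$ is a monomorphism. A finite monomorphism is a closed immersion, and the asserted moduli description of $\vec{\cM}_{H^p}$ will then be read off from the corresponding closed subfunctor of $\cM_{H_{\rm aux}^p}$. Finiteness is the soft part: $\cM_{H_{\rm aux}^p}$ is of finite type over the excellent ring $\ZZ_{(p)}$, hence Nagata, and by (\ref{418}) the induced extension of function fields is finite, so the normalization of $\cM_{H_{\rm aux}^p}$ in $\cM_{H^p}$ --- which is $\vec{\cM}_{H^p}$ by definition --- is finite over $\cM_{H_{\rm aux}^p}$. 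One may equivalently phrase the whole assertion as follows: the scheme-theoretic closure $\bar{\cM}$ of $\cM_{H^p}$ in $\cM_{H_{\rm aux}^p}$ --- a flat, reduced $\ZZ_{(p)}$-scheme with generic fibre $\cM_{H^p}$ --- is already normal. Indeed, if $\bar{\cM}$ is normal then $\vec{\cM}_{H^p}\simeq\bar{\cM}$ over $\cM_{H_{\rm aux}^p}$; conversely a finite birational surjection onto $\bar{\cM}$ that is simultaneously a closed immersion into $\cM_{H_{\rm aux}^p}$ must be an isomorphism onto $\bar{\cM}$, forcing $\bar{\cM}$ to be normal.

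The substance is therefore the monomorphism property (equivalently, the normality of $\bar{\cM}$), and this is where the results of \cite{PR2} come in. The first step is to realize $\vec{\cM}_{H^p}$ as the moduli functor $\cR$ over $\ZZ_{(p)}$ parametrizing tuples $(A,i,\la,\alpha_{H^p})$ --- an abelian scheme with $\cO$-action, a polarization, and a prime-to-$p$ level structure of type $(L,\psi)$, subject to the Kottwitz determinant condition --- together with the refined Pappas--Rapoport datum: a step filtration of $\Lie A$ (equivalently, of the Hodge filtration inside $H^1_{\mathrm{dR}}(A/S)$) by locally free $(\cO\otimes_{\ZZ}\cO_S)$-modules, with prescribed graded ranks, compatible with the action of a uniformizer at each place of $F$ above $p$. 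I would then invoke from \cite{PR2}: (i) that, via Grothendieck--Messing deformation theory, $\cR$ is \'etale-locally isomorphic to the associated splitting local model, which is flat and normal over $\ZZ_{(p)}$, so that $\cR$ is representable by a normal flat $\ZZ_{(p)}$-scheme --- canonically $\vec{\cM}_{H^p}$, since its generic fibre is $\cM_{H^p}$ and the refined condition is automatic in characteristic $0$; and (ii) that on this local model the refined filtration is uniquely determined by the coarser data. Granting (i) and (ii), $f$ is a monomorphism of functors: for an $S$-point of $\cM_{H_{\rm aux}^p}$ lying in the image, the abelian scheme $A$ and the polarization $\la$ are recovered through the embedding $L\hookrightarrow L_{\rm aux}$ together with the auxiliary level structure; the $\cO$-action $i$ is then forced, since any $\cO$-endomorphism of $A$ is determined by its action on the prime-to-$p$ Tate module ($\End(A)$ is torsion-free and embeds into $\End(T_\ell A)$ for any $\ell\neq p$, while $H^p\subset G(\hat{\ZZ}^p)$ commutes with the $\cO$-action); and the Pappas--Rapoport filtration is forced by (ii). Being finite and a monomorphism, $f$ is a closed immersion, and $\vec{\cM}_{H^p}$ is thereby identified with the closed subscheme of $\cM_{H_{\rm aux}^p}$ representing those auxiliary objects whose structure refines --- necessarily uniquely --- to a full Siegel--Hilbert PEL datum of level type $(L,\psi)$ satisfying the Pappas--Rapoport condition. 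This is the asserted moduli interpretation.

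The hard part is concentrated entirely in inputs (i) and (ii): the analysis in \cite{PR2} of the splitting local model for $G=\mathrm{Res}_{\cO_F/\ZZ}\GSp_{2g}$ at hyperspecial level when $p$ ramifies in $F$, establishing its flatness and normality and the rigidity of the refinement. This is genuinely delicate --- even the naive local model fails to be flat in the ramified case --- and is the technical core of the theorem; everything else above is formal. Accordingly, I would import Theorem \ref{prmod} directly from \cite{PR2}, the role of the present discussion being only to package it in the form needed in the sequel.
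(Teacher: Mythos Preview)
Your reduction is the same as the paper's: both recognize that the assertion is equivalent to the normality of the scheme-theoretic closure $\bar{\cM}$ of $\cM_{H^p}$ inside $\cM_{H_{\rm aux}^p}$, and both defer entirely to \cite{PR2} for this. The paper's proof is even terser than yours: it simply cites Theorem~12.2 of \cite{PR2}, which asserts directly that this flat closure is normal, hence coincides with its normalization $\vec{\cM}_{H^p}$; the moduli interpretation is then referred to Section~15 of \cite{PR2}.

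Your packaging via the splitting model and the two inputs (i)--(ii) is more elaborate, and deserves a caution. The splitting model of \cite{PR2} carries genuinely additional data (the filtration), and the forgetful map from the splitting model to the naive or canonical local model is in general a proper modification, not a monomorphism---so your (ii), uniqueness of the refinement over the integral base, is not the form in which \cite{PR2} states its result and is not something one should expect to hold for splitting models in general. What Theorem~12.2 of \cite{PR2} actually delivers is the normality of the flat closure directly (the splitting model enters only as an auxiliary device in the proof). Since you already isolated ``$\bar{\cM}$ is normal'' as the equivalent target and explicitly say you would import the theorem wholesale from \cite{PR2}, the discrepancy is one of exposition rather than substance; but if you were writing this out, the correct citation hook is normality of the flat closure, not rigidity of the splitting structure.
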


\begin{proof}

By Theorem 12.2 of \cite{PR2}, the flat scheme-theoretic image in $\vec{\cM}_{H_{\rm aux}^p}$ of the generic fibre $\cM_{H^p}$ is normal, hence is canoically isomorphic to $\vec{\cM}_{H^p}$. In another word, the integral model $\vec{\cM}_{H^p}$ defined above coincides with the canonical integral model of Pappas-Rapoport \cite{PR2}.

For the last claim, the reader is referred to Section 15 \cite{PR2} for more details.
 \end{proof}

\subsection{Ordinary loci and partial compactifications}

\subsubsection{Level structures prime to $p$} \label{leveln}

We recall certain results from Ch. 3 \cite{La12}. Let $S$ be a scheme over $\ZZ_{(p)}$. Let $A$ be an abelian scheme over $S$, equipped with polarization $\lambda$ and $\cO$-endomorphism $i$ as before. Let $H^p\subset G(\hat{\ZZ}^p)$ be an open compact. Let $N$ be a natural number prime to $p$  such that $H^p\supset U(N)$, the principal mod $N$ congruence subgroup. A principal level $N$ structure of $(A,\lambda, i)$ of type $(L_{\hat{\ZZ}^p},\psi)$ is the pair $(\alpha_N,\nu_N)$ defined as follows:

\begin{itemize}

\item $\alpha_N: L/NL\stackrel{\sim}{\ra}A[N]$ is an $\cO$-linear isomorphism of group schemes over $S$, such that 

(1) the symplectic pairing  \[L/NL \times  L/NL \lra \ZZ/N\ZZ\]  and the $\lambda$-Weil pairing \[A[N]\times A[N]\lra \mu_N\] induced by the polarization $\lambda$ are compatible for a chosen isomorphism of group schemes \[\nu_N:\ZZ/N\ZZ\stackrel{\sim}{\lra} \mu_N\] with respect to a fixed primitive $N$-th root of unity $\zeta_N$. 
 
 (2) $\alpha_H$ is symplectic liftable: there is a tower of finite \'etale surjections \[(S_M\twoheadrightarrow S_N=S)_{N\mid M, p\nmid M}\] and $\cO$-linear isomorphisms $\alpha_M: L/ML\stackrel{\sim}{\ra}A[M]$ with respect to an isomorphism $\nu_M:\ZZ/M\ZZ\stackrel{\sim}{\ra} \mu_M$ such that for any valid indices $M'|M''$  \[(\alpha_{M'},\nu_{M'})=(\alpha_{M''},\nu_{M''}) \text{ mod } M'.\] (This condition is required so that $\alpha_N$ lifts, at any geometric point $s$ of $S$, to an $\cO$-linear symplectic isomorphism between $L_{\hat{\ZZ}^p}$ and the Tate module of $A_s$.)

\end{itemize}

Consider all natural numbers $N$ such that $p\nmid N$ and $H^p\supset U(N)$. A level $H^p$ structure of $(A,\lambda, i)$ of type $(L_{\hat{\ZZ}^p},\psi)$ is a collection of $H^p/U(N)$-orbits of principal level $N$ structures  $(\alpha_N,\nu_N)$ for all $N$ as above.

\subsubsection{Ordinary level structures at $p$}

Let \[0=D^1\subset D^0\subset D^{-1}=L_{\ZZ_p}\] be a filtration of $\cO\otimes_{\ZZ}\ZZ_p$-modules, such that $\mathrm{Gr}_D^{-1}:=D^{-1}/D^0$ is torsion  free as a $\ZZ_p$-module, and under the pairing $\psi$ $D^0$ is totally isotropic and is its own annihilator.   Such a filtration determines a filtration \[0=D^{\vee,1}\subset D^{\vee,0}\subset D^{\vee,-1}=L_{\ZZ_p}^{\vee}\] on the dual lattice $L_{\ZZ_p}^{\vee}$. We have the natural map \[\phi_D^0:D^0\ra D^{\vee,0}\] whose reduction mod $p^n$ is denoted by $\phi_{D,p^n}^0$. 

Let $P_D\subset G_{\ZZ_p}$ be  the stabilizer of $D$. Let $M_D$ be the group over $\ZZ_p$, whose $R$-points, for any $\ZZ_p$-algebra $R$, are 
$(g,c)\in \GL_{\cO\otimes_{\ZZ}R}(\mathrm{Gr}_D\otimes_{\ZZ_p}R)\times \mathbf{G}_m(R)$ such that $\psi(gx,gy)=c\psi(x,y)$. We denote by $U_D$ the kernel of the natural morphism from $P_D$ to $M_D$. Now for any integer $n\in \ZZ_{\geq 0}$, we set 
\[U_{p,0}(p^n)=(G(\ZZ_p)\ra G(\ZZ/p^n\ZZ))^{-1}P_D(\ZZ/p^n\ZZ),\]
\[U_{p,1}^{\rm bal}(p^n)=(G(\ZZ_p)\ra G(\ZZ/p^n\ZZ))^{-1}U_D(\ZZ/p^n\ZZ).\]

Let $S$ be a scheme over $\ZZ$. Let $A$ be an abelian scheme together with a polarization $\lambda$ and an $\cO$-endomorphism $i$. An ordinary principal level $p^n$ structure of $(A,\lambda,i)$ of type $(L_{\ZZ_p},\psi, D)$ is the following data:

\begin{itemize}

\item  An $\cO$-linear homomorphism $\alpha_{p^n}^0: (D^0/p^nD^0)^{\rm mult}\ra A[p^n]$ of group schemes over $S$.

\item  An $\cO$-linear homomorphism $\alpha_{p^n}^{\vee,0}: (D^{\vee,0}/p^nD^{\vee,0})^{\rm mult}\ra A^{\vee}[p^n]$  of group schemes over $S$.

\item A section $\nu_{p^n}$ of $(\ZZ/p^n\ZZ)^{\times}\simeq \mathrm{Isom}_S(\mu_{p^n},\mu_{p^n})$ so that the homomorphism of multiplicative group schemes \[\nu_{p^n}\circ (\phi_{D,p^n}^0)^{\rm mult}: (D^0/p^nD^0)^{\rm mult}\ra (D^{\vee,0}/p^nD^{\vee,0})^{\rm mult}\] is compatible with $\lambda$ under $\alpha_{p^n}^0$ and $\alpha_{p^n}^{\vee,0}$, and such that the scheme theoretic images  $\mathrm{Im} (\alpha_{p^n}^0)$ and $\mathrm{Im}(\alpha_{p^n}^{\vee,0})$ kill each other under the $\lambda$-Weil pairing on $A[p^n]\times A^{\vee}[p^n]$.

\item The requirement that  $\alpha_{p^n}$ is symplectic liftable: there is a tower of quasi-finite \'etale surjections \[(S_{p^{n'}}\twoheadrightarrow S_{p^n}=S)_{n'\geq n}\] and triples $(\alpha_{p^{n'}}^0, \alpha_{p^{n'}}^{\vee,0}, \nu_{p^{n'}})$ as above such that for any $n''\geq n'$, \[(\alpha_{p^{n''}}^0, \alpha_{p^{n''}}^{\vee,0}, \nu_{p^{n''}}) \text{ mod } p^{n'}=(\alpha_{p^{n'}}^0, \alpha_{p^{n'}}^{\vee,0}, \nu_{p^{n'}}).\]  
\end{itemize}

Let $H_p\subset G(\ZZ_p)$ be an open compact subgroup such that $U_{p,1}^{\rm bal}(p^n)\subset H_p\subset U_{p,0}(p^n)$ for some integer $n\geq 0$. An ordinary level $H_p$ structure of $(A,\lambda,i)$ of type $(L_{\ZZ_p},\psi, D)$ is an $H_p/U_{p,1}^{\rm bal}(p^n)$-orbit of ordinary principal level $p^n$ structure of $(A,\lambda,i)$ of type $(L_{\ZZ_p},\psi, D)$.

\subsubsection{Integral models with ordinary level structures}

Let $H=H^pH_p\subset G(\hat{\ZZ})$ be an open compact subgroup such that $U_{p,1}^{\rm bal}(p^n)\subset H_p\subset U_{p,0}(p^n)$ for some integer $n\geq 0$. Let $\cM_H^{\rm ord, naive}$ be the functor that assigns to a $\ZZ_{(p)}$-scheme $S$ the isomorphism classes of the tuples $(A, i, \la, \alpha_{H^p}, \alpha_{H_p})$ as follows:
 
\begin{itemize}
\item $A$ is an abelian scheme over $S$ of relative dimension $dg$, equipped with an $\cO$-endomorphism $i: \cO\hookrightarrow \mathrm{End}_S(A)$.
\item $\la: A\ra A^{\vee}$ is a polarization.  

\item $\alpha_{H^p}$ is a level $H^p$ structure of type $(L_{\hat{\ZZ}^p}, \psi)$.

\item $\alpha_{H_p}$ is a level  $H_p$ structure of type $(L_{\ZZ_p}, \psi,D)$.
\end{itemize}

The functor $\cM_H^{\rm ord, naive}$ is represented by a scheme of finite type over $\ZZ_{(p)}$.

Let $r_{H}$ be the fixed nonnegative integer as in \cite{La12} which is determined by the PEL data and the filtration $D$ of $L_{\ZZ_p}$. One can check that, over any $\QQ[\zeta_{p^{r_H}}]$-scheme $S$, there is a natural assignment from the level $H$ structures of $(A,i,\lambda)_S$ to the pairs of   $H^p$-level structure of type $(L_{\hat{\ZZ}^p}, \psi)$ and $H_p$-level structure of type $(L_{\ZZ_p}, \psi)$, which is in fact injective. As a consequence, we have an open and closed immersion \[
\cM_H\otimes_{\QQ}\QQ[\zeta_{p^{r_H}}]\lra \cM_{H}^{\rm ord, naive}\otimes_{\ZZ}\QQ[\zeta_{p^{r_H}}],\]whose image is denoted by $\cM_H^{\rm ord}$, which is an open and closed subscheme of $\cM_{H}^{\rm ord, naive}\otimes_{\ZZ}\QQ[\zeta_{p^{r_H}}]$.

\begin{prop}

The normalization $\vec{\cM}_H^{\rm ord}$ of  $\cM_{H}^{\rm ord, naive}$ in $\cM_H^{\rm ord}$ under the natural morphism $\cM_H^{\rm ord}\ra \cM_{H}^{\rm ord, naive}$ is a scheme smooth quasi-projective  separated  of finite type over $\ZZ_{(p)}[\zeta_{p^{r_H}}]$, which is an open subscheme of $\vec{\cM}_H\otimes_{\ZZ_{(p)}}\ZZ_{(p)}[\zeta_{p^{r_H}}]$.

\end{prop}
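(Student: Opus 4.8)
The plan is to follow the strategy of Chapter~3 of \cite{La12}, in three stages.

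\emph{Stage 1 (formal properties).} I would first record that, since $\cM_H^{\rm ord, naive}$ is of finite type over the excellent (Nagata) ring $\ZZ_{(p)}$ and $\cM_H^{\rm ord}$ is quasi-compact, the relative normalization $\vec{\cM}_H^{\rm ord}\to\cM_H^{\rm ord, naive}$ is defined; it factors through $\cM_H^{\rm ord, naive}\otimes_{\ZZ}\ZZ_{(p)}[\zeta_{p^{r_H}}]$ and is the finite normalization of the scheme-theoretic closure of $\cM_H^{\rm ord}$ therein. Hence $\vec{\cM}_H^{\rm ord}$ is a normal scheme, separated and of finite type over $\ZZ_{(p)}[\zeta_{p^{r_H}}]$; it is flat over this ring — a discrete valuation ring — because it is reduced with $\QQ[\zeta_{p^{r_H}}]$-dense generic fibre and hence torsion-free. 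Its generic fibre $\cM_H^{\rm ord}\simeq\cM_H\otimes_{\QQ}\QQ[\zeta_{p^{r_H}}]$ is smooth and quasi-projective over $\QQ[\zeta_{p^{r_H}}]$, so it remains to prove (i) smoothness of $\vec{\cM}_H^{\rm ord}$ over $\ZZ_{(p)}[\zeta_{p^{r_H}}]$ and (ii) the identification of $\vec{\cM}_H^{\rm ord}$ with an open subscheme of $\vec{\cM}_H\otimes_{\ZZ_{(p)}}\ZZ_{(p)}[\zeta_{p^{r_H}}]$; quasi-projectivity will then follow from that of $[\vec{\cM}_H]$, since a quasi-compact open subscheme of a quasi-projective scheme is quasi-projective.

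\emph{Stage 2 (the ordinary locus and an \'etale cover).} I would work over the prime-to-$p$ integral model $\vec{\cM}_{H^p}$, which by Theorem~\ref{prmod} carries a universal tuple $(\cA,i,\la,\alpha_{H^p})$ satisfying the Kottwitz determinant condition. Let $\vec{\cM}_{H^p}^{\rm ord}\subset\vec{\cM}_{H^p}$ be the open subscheme over which $\cA[p^\infty]$, with its $\cO\otimes\ZZ_p$-action, is ordinary of the type dictated by the filtration $D$ (its multiplicative part having the multi-rank of $D^0$); its generic fibre is all of $\cM_{H^p}$. The geometric input I would take from \cite{La12} — ultimately the Serre--Tate and Grothendieck--Messing deformation theory of ordinary $p$-divisible groups with PEL structure — is that $\vec{\cM}_{H^p}^{\rm ord}$ is smooth over $\ZZ_{(p)}$; this is where the failure of smoothness of the Pappas--Rapoport model at primes ramified in $F$ becomes irrelevant. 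Over $\vec{\cM}_{H^p}^{\rm ord}$ the group $\cA[p^\infty]$, and dually $\cA^\vee[p^\infty]$, carries a canonical filtration adapted to $D$ with multiplicative bottom step, and I would check that prescribing an ordinary level $H_p$ structure of type $(L_{\ZZ_p},\psi,D)$ on $(\cA,\la,i)$ amounts, after the base change $-\otimes_{\ZZ_{(p)}}\ZZ_{(p)}[\zeta_{p^{r_H}}]$, to choosing trivializations of the Cartier duals of these multiplicative pieces together with the similitude datum $\nu_{p^n}$, each forming a torsor under a finite \emph{constant} group scheme once $\zeta_{p^{r_H}}$ is available. Thus this moduli problem is represented by a scheme $\cX_H$ finite \'etale over $\vec{\cM}_{H^p}^{\rm ord}\otimes_{\ZZ_{(p)}}\ZZ_{(p)}[\zeta_{p^{r_H}}]$, hence smooth over $\ZZ_{(p)}[\zeta_{p^{r_H}}]$ and in particular normal, with generic fibre canonically $\cM_H\otimes_{\QQ}\QQ[\zeta_{p^{r_H}}]=\cM_H^{\rm ord}$ (in characteristic~$0$ every $H_p$-level structure is of ordinary type).

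\emph{Stage 3 (identifications, and the main obstacle).} Forgetting the determinant condition and the auxiliary data gives a finite morphism $\cX_H\to\cM_H^{\rm ord, naive}$ restricting on generic fibres to the given open and closed immersion into $\cM_H^{\rm ord, naive}\otimes_{\ZZ}\QQ[\zeta_{p^{r_H}}]$; since $\cX_H$ is normal with generic fibre $\cM_H^{\rm ord}$, the universal property of relative normalization yields a canonical isomorphism $\cX_H\simeq\vec{\cM}_H^{\rm ord}$, which proves (i) (and, with Stage~1, quasi-projectivity). For (ii), by transitivity of relative normalization together with Theorem~\ref{prmod}, $\vec{\cM}_H$ is the normalization of $\vec{\cM}_{H^p}$ in $\cM_H$; the same moduli-theoretic analysis shows that the part of $\vec{\cM}_H$ lying over $\vec{\cM}_{H^p}^{\rm ord}$ is already a finite \'etale cover of $\vec{\cM}_{H^p}^{\rm ord}$, hence remains normal after $-\otimes_{\ZZ_{(p)}}\ZZ_{(p)}[\zeta_{p^{r_H}}]$ and therefore coincides there with its own normalization $\cX_H$; so $\vec{\cM}_H^{\rm ord}\simeq\cX_H$ is the open preimage of $\vec{\cM}_{H^p}^{\rm ord}$ inside $\vec{\cM}_H\otimes_{\ZZ_{(p)}}\ZZ_{(p)}[\zeta_{p^{r_H}}]$. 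The step I expect to be the main obstacle is the moduli-theoretic core of Stage~2: establishing smoothness of the ordinary locus of the Pappas--Rapoport model over $\ZZ_{(p)}$ at ramified primes, and checking that ordinary level structures at $p$ — in particular the similitude datum $\nu_{p^n}$, which is exactly what forces the cyclotomic extension by $\zeta_{p^{r_H}}$ and makes $\cM_H$ (defined over $E=\QQ$) agree integrally with the ordinary moduli only after adjoining $\zeta_{p^{r_H}}$ — give rise to a finite \emph{\'etale}, rather than merely quasi-finite or ramified, cover. Everything else is formal manipulation of relative normalizations, or may be quoted directly from \cite{La12}.
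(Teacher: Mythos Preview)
The paper does not actually prove this proposition: it is stated without proof as one of the results recalled from Chapter~3 of \cite{La12} (see the sentence ``We recall certain results from Ch.~3 \cite{La12}'' opening the subsection). So there is no in-paper argument to compare your proposal against; your three-stage outline is essentially a sketch of how Lan's proof goes, and in that sense it matches the paper's ``proof'' (namely the citation) perfectly well.

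One small point to tighten in your Stage~2/3: you assert that the generic fibre of $\cX_H$ is canonically $\cM_H\otimes_{\QQ}\QQ[\zeta_{p^{r_H}}]$, justified by ``in characteristic~$0$ every $H_p$-level structure is of ordinary type.'' But the paper is careful to say only that the assignment from full $H$-level structures to pairs (prime-to-$p$ level, ordinary $H_p$-level) is \emph{injective}, and that $\cM_H^{\rm ord}$ is its image, an open and closed subscheme of $\cM_H^{\rm ord,naive}\otimes\QQ[\zeta_{p^{r_H}}]$. So a priori your $\cX_H$ could have extra components in its generic fibre; you should pass to the open and closed subscheme whose generic fibre is $\cM_H^{\rm ord}$ before invoking the universal property of relative normalization. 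This is a routine fix and does not affect the rest of your argument.
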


 \subsubsection{Partial compactifications of ordinary loci}
 
 Keep the data as before.
 
 \begin{theo}\label{ordtor}
 
 There is a scheme $\vec{\cM}_H^{\rm ord,tor}$, quasi-projective smooth separated of finite type over $\ZZ_{(p)}[\zeta_{p^{r_H}}]$, containing $\vec{\cM}_H^{\rm ord}$ as an open dense  subscheme.  
 The universal tuple $(\cA,i,\lambda, \alpha_{H^p},\alpha_{H_p})$ on $\vec{\cM}_H^{\rm ord}$  extends to  $\vec{\cM}_H^{\rm ord,tor}$. The boundary $\vec{\cM}_H^{\rm ord,tor}\backslash \vec{\cM}_H^{\rm ord}$ is a relative Cartier divisor with normal crossing.

 \end{theo}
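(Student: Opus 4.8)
The plan is to deduce the theorem from the analogous statement for the auxiliary Shimura datum of \S\ref{moduli2}, for which $p$ is a \emph{good} prime, by passing to normalizations --- precisely the strategy used above to construct $\vec{\cM}_H$, $\vec{\cM}_H^{\rm tor}$ and $\vec{\cM}_H^{\rm ord}$. First I would invoke the theory of \cite{La12} for the auxiliary datum: fixing a suitable admissible smooth rational polyhedral cone decomposition $\Sigma^{\rm ord}_{\rm aux}$ for the ordinary locus $\cM_{H_{\rm aux}}^{\rm ord, naive}$, there is a partial toroidal compactification $\vec{\cM}_{H_{\rm aux}}^{\rm ord, tor}$, quasi-projective smooth separated of finite type over $\ZZ_{(p)}$, containing the auxiliary ordinary locus as an open dense subscheme, carrying an extension of the universal semi-abelian scheme with its $\cO$-action, polarization and level structures, and whose boundary is a relative Cartier divisor with normal crossings. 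I would then choose an admissible cone decomposition $\Sigma^{\rm ord}$ for the ordinary locus of $\cM_H$ compatibly with $\Sigma^{\rm ord}_{\rm aux}$, as in Proposition \ref{cptchar0}, so as to obtain a toroidal compactification $\cM_H^{\rm ord, tor}$ of the generic fibre together with a morphism $\cM_H^{\rm ord, tor}\ra \vec{\cM}_{H_{\rm aux}}^{\rm ord, tor}\otimes_{\ZZ_{(p)}}\QQ[\zeta_{p^{r_H}}]$ extending the morphism on interiors and compatible with the stratifications.

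I would then \emph{define} $\vec{\cM}_H^{\rm ord, tor}$ to be the normalization of $\vec{\cM}_{H_{\rm aux}}^{\rm ord, tor}\otimes_{\ZZ_{(p)}}\ZZ_{(p)}[\zeta_{p^{r_H}}]$ in the generic fibre $\cM_H^{\rm ord, tor}$. By construction this is a normal scheme, finite over a quasi-projective $\ZZ_{(p)}[\zeta_{p^{r_H}}]$-scheme --- hence itself quasi-projective --- separated and of finite type, with generic fibre canonically $\cM_H^{\rm ord, tor}$; and because normalization is compatible with the open immersion $\cM_H^{\rm ord}\hookrightarrow\cM_H^{\rm ord, tor}$ and with restriction over the open subscheme $\cM_{H_{\rm aux}}^{\rm ord, naive}\subset\vec{\cM}_{H_{\rm aux}}^{\rm ord, tor}$, the scheme $\vec{\cM}_H^{\rm ord}$ of the previous proposition is recovered as an open dense subscheme.

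The crux, and the step I expect to be hardest, is smoothness over $\ZZ_{(p)}[\zeta_{p^{r_H}}]$; this is classical on the generic fibre, so all the content is at primes ramified in $F$. The key observation is that near the (toroidal boundary of the) ordinary locus one does \emph{not} need the good-reduction local model theory: the relevant truncated Barsotti--Tate groups are extensions of an \'etale part by a multiplicative part, so the Pappas--Rapoport splitting data on their de Rham (or Dieudonn\'e) realizations are automatically present and deform without obstruction. Concretely, I would combine Grothendieck--Messing and Serre--Tate deformation theory in the interior with Mumford's theory of degeneration data along the boundary --- the ``good'' formal completions along the boundary strata being, up to \'etale localization, products of a Mumford family with a formal torus --- in order to identify the complete local rings of $\vec{\cM}_H^{\rm ord, tor}$ with those of $\vec{\cM}_{H_{\rm aux}}^{\rm ord, tor}\otimes_{\ZZ_{(p)}}\ZZ_{(p)}[\zeta_{p^{r_H}}]$, which are regular, possibly after a finite flat cover that is again formally smooth because the extra level data at $p$ is of multiplicative/\'etale type. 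This is exactly what Lan carries out in \cite{La12}; the remaining work is to match it up with the auxiliary choices made here.

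Finally, the universal tuple $(\cA, i, \lambda, \alpha_{H^p}, \alpha_{H_p})$ extends: the semi-abelian scheme extends from the auxiliary partial toroidal compactification by pull-back; the $\cO$-action $i$ and the polarization $\lambda$ extend over the normal scheme $\vec{\cM}_H^{\rm ord, tor}$ because homomorphisms of semi-abelian schemes over a normal base extend across a normal-crossings divisor once they extend generically; and the prime-to-$p$ and ordinary level structures extend because they are \'etale-, resp.\ multiplicative/\'etale-, locally defined and the ordinary hypothesis controls how the relevant finite flat subgroup schemes degenerate along the boundary. The last assertion --- that $\vec{\cM}_H^{\rm ord, tor}\setminus\vec{\cM}_H^{\rm ord}$ is a relative Cartier divisor with normal crossings --- then follows from the smoothness established above together with the smoothness of $\Sigma^{\rm ord}$: \'etale-locally the boundary is the vanishing locus of a product of coordinates in a smooth chart, exactly as in the good-reduction case.
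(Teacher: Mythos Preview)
The paper gives no proof of this theorem: it is quoted from \cite{La12} (see the opening of \S2.4, ``We recall certain results from Ch.~3 \cite{La12}''), and the statement stands without argument. Your proposal is therefore not competing with a proof in the paper but rather sketching what the cited reference does.

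That said, a brief comparison with Lan's actual construction may be useful. You propose to \emph{define} $\vec{\cM}_H^{\rm ord,tor}$ as the normalization of the auxiliary $\vec{\cM}_{H_{\rm aux}}^{\rm ord,tor}$ and then verify smoothness by matching complete local rings. Lan in \cite{La12} proceeds differently: he builds the partial toroidal compactification of the ordinary locus directly, by gluing good formal models along boundary strata using degeneration data and toroidal embeddings, so that smoothness is an output of the construction rather than something to be checked after the fact on a normalization. The normalization-from-auxiliary strategy is used in this paper for the \emph{full} integral models $\vec{\cM}_H$, $\vec{\cM}_H^{\rm min}$, $\vec{\cM}_H^{\rm tor}$ (Propositions~\ref{cptchar0} and the preceding discussion), precisely because smoothness can fail there; over the ordinary locus one does not need that detour. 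Your deformation-theoretic observations (multiplicative-by-\'etale structure of the $p$-divisible group forcing the Pappas--Rapoport conditions to be vacuous, Serre--Tate in the interior, Mumford degeneration at the boundary) are exactly the right inputs, and you correctly note that this is what \cite{La12} carries out --- but the logical architecture you describe (normalize first, check smoothness second) is not how the reference is organized, and making that route rigorous would require the very local computations that make the direct construction work anyway.
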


We have the Hodge line bundle $(\det\omega)_{\vec{\cM}_H^{\rm ord}}$ over $\vec{\cM}_H^{\rm ord}$, and its extension $(\det\omega)_{\vec{\cM}_H^{\rm ord,tor}}$ to $\vec{\cM}_H^{\rm ord,tor}$. Form 
\[\vec{\cM}_H^{\rm ord,min}=\mathrm{Proj}_{\oplus_{k\geq 0}}\Gamma(\vec{\cM}_H^{\rm ord,tor},(\det\omega)_{\vec{\cM}_H^{\rm ord,tor}}^k).\]This is in general not projective, as the partial toroidal compactification $\vec{\cM}_H^{\rm ord,tor}$ is  not proper. 

\begin{theo}\label{normal}

There exists a canonical proper morphism \[\vec{\cM}_H^{\rm ord,tor} \lra \vec{\cM}_H^{\rm ord,min}. \] The scheme $\vec{\cM}_H^{\rm ord,min}$ is quasi-projective  normal and flat over $\ZZ_{(p)}[\zeta_{p^{r_H}}]$ which contains $\vec{\cM}_H^{\rm ord}$ as an open dense subscheme.

 \end{theo}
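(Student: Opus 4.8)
The plan is to imitate Proposition~\ref{cptchar0}: build $\vec{\cM}_H^{\rm ord,min}$ by normalization from the auxiliary datum $G_{\rm aux}$ --- for which $p$ is a good prime, so that the partial toroidal and minimal compactifications of the ordinary locus, together with their positivity, are available from \cite{La12} --- and then check that the outcome agrees with the $\mathrm{Proj}$ recalled above. Concretely, I would start from the quasi-projective normal flat $\ZZ_{(p)}$-scheme $\cM_{H_{\rm aux}^p}^{\rm ord,min}$ of \cite{La12}, which carries an ample Hodge line bundle and a canonical proper morphism $\cM_{H_{\rm aux}^p}^{\rm ord,tor}\to\cM_{H_{\rm aux}^p}^{\rm ord,min}$ contracting the boundary, and set $\vec{\cM}_H^{\rm ord,min}$ to be the normalization of $\cM_{H_{\rm aux}^p}^{\rm ord,min}$ in $\cM_H^{\rm ord,min}:=\cM_H^{\rm min}\otimes_{\QQ}\QQ[\zeta_{p^{r_H}}]$ along the morphism extending, on generic fibres, the finite one induced by $t\colon G\to G_{\rm aux}$ (after a compatible choice of cone decompositions, as in Proposition~\ref{cptchar0}(1) for the ordinary loci). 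Exactly as in Proposition~\ref{cptchar0}(2), this normalization is quasi-projective and normal, is flat over the Dedekind base $\ZZ_{(p)}[\zeta_{p^{r_H}}]$ with generic fibre $\cM_H^{\rm ord,min}$, and contains $\vec{\cM}_H^{\rm ord}$ as an open dense subscheme, the open immersion being inherited from the one for $G_{\rm aux}$ through the compatibility of normalization with open immersions.

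Next I would produce the canonical proper morphism $\oint\colon\vec{\cM}_H^{\rm ord,tor}\to\vec{\cM}_H^{\rm ord,min}$ and match the target with this $\mathrm{Proj}$. The composite of the canonical map $\vec{\cM}_H^{\rm ord,tor}\to\cM_{H_{\rm aux}^p}^{\rm ord,tor}$ of partial toroidal compactifications (the ordinary-locus analogue of (\ref{4225})) with the contraction for $G_{\rm aux}$ takes values in $\cM_{H_{\rm aux}^p}^{\rm ord,min}$; since $\vec{\cM}_H^{\rm ord,tor}$ is normal by Theorem~\ref{ordtor}, the universal property of normalization lifts it uniquely to a morphism $\oint$ into $\vec{\cM}_H^{\rm ord,min}$, which is proper because it is so on generic fibres and both source and target are proper over $\cM_{H_{\rm aux}^p}^{\rm ord,min}$. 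The essential geometric input, to be extracted from \cite{La12}, is that along every boundary stratum of $\vec{\cM}_H^{\rm ord,tor}$ the Hodge bundle $\det\omega$ is canonically the pullback of the Hodge bundle of a smaller Siegel-Hilbert ordinary locus, so that the toroidal coordinates do not ``see'' $\det\omega$; this is precisely what makes $\oint$ contract the boundary, makes $\oint_*\cO_{\vec{\cM}_H^{\rm ord,tor}}=\cO_{\vec{\cM}_H^{\rm ord,min}}$, and exhibits $\det\omega$ on $\vec{\cM}_H^{\rm ord,tor}$ as the pullback of an \emph{ample} line bundle $\det\omega$ on $\vec{\cM}_H^{\rm ord,min}$.

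Granting those facts, the projection formula together with $\oint_*\cO=\cO$ yields $\Gamma(\vec{\cM}_H^{\rm ord,tor},(\det\omega)^k)=\Gamma(\vec{\cM}_H^{\rm ord,min},(\det\omega)^k)$ for all $k\geq 0$, so the graded ring whose $\mathrm{Proj}$ defines $\vec{\cM}_H^{\rm ord,min}$ in the recalled construction is the homogeneous coordinate ring of the quasi-projective scheme just produced, with respect to the ample bundle $\det\omega$; ampleness then identifies its $\mathrm{Proj}$ canonically with that scheme, and in particular shows the ring is finitely generated over the base --- the one non-formal point hidden in ``quasi-projective'', since $\vec{\cM}_H^{\rm ord,tor}$ is not proper. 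I expect the main obstacle to be exactly this package of boundary facts about $\det\omega$ and the contraction morphism in the partial, integral, and possibly ramified setting: rather than redo the local analysis near the ordinary cusps by hand, I would invoke the partial minimal compactifications and the partial Koecher principle of \cite{La12} for the good auxiliary datum and descend by normalization as above. Everything else --- normality, flatness over a Dedekind base, density of $\vec{\cM}_H^{\rm ord}$ --- is then routine.
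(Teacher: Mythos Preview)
The paper does not actually prove Theorem~\ref{normal}: it is stated as one of several results recalled from \cite{La12} (see the sentence ``We recall certain results from Ch.~3 \cite{La12}'' at the start of the subsection on ordinary level structures, which governs the subsequent Proposition and Theorems~\ref{ordtor} and~\ref{normal}). So there is no ``paper's own proof'' to compare against; the authors treat the construction of partial minimal compactifications of ordinary loci as a black box from Lan's work.

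Your sketch is a plausible reconstruction of how one would prove such a statement, and you correctly identify that the substantive inputs --- ampleness of $\det\omega$ on the partial minimal compactification, the contraction behavior of $\oint$ along boundary strata, and the equality $\oint_*\cO=\cO$ --- must be extracted from \cite{La12}. Two cautions, though. First, the paper \emph{defines} $\vec{\cM}_H^{\rm ord,min}$ as the $\mathrm{Proj}$, so your normalization description is an alternative construction whose agreement with the $\mathrm{Proj}$ is part of what needs proving; you address this, but note that in \cite{La12} the logic may run in the other direction (build the $\mathrm{Proj}$ first, then verify its properties directly via the local structure near the boundary), so your ``descend by normalization from $G_{\rm aux}$'' route, while natural given how the paper handles $\vec{\cM}_H^{\rm min}$ in Proposition~\ref{cptchar0}, may not be the path Lan actually takes for the ordinary locus. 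Second, finite generation of the section ring --- which you flag as the ``one non-formal point'' --- is genuinely delicate in the non-proper setting, and your appeal to ampleness on the normalization model is circular unless you have an independent source for that ampleness; in \cite{La12} this presumably comes from the explicit description of the partial boundary charts, not from general principles. In short: your outline is sound as a roadmap, but to turn it into a proof you would essentially be reproving a chunk of \cite{La12}, which is why the present paper simply cites it.
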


 \begin{rk}
 
 For the moduli space $\vec{\cM}_{H^p}$ with prime to $p$ level, the integral model $\vec{\cM}_{H^p}^{\rm ord}$ is simply the ordinary locus of $\vec{\cM}_{H^p}\otimes_{\ZZ_{(p)}}\ZZ_{(p)}[\zeta_{p^{r_H}}]$. It then comes with a moduli interpretation, by Theorem \ref{prmod}.

 \end{rk}

\subsection{Hecke correspondences}

 \subsubsection{The double-coset Hecke algebra} \label{doublecoset}

 Let $q$ be a prime number and $v\mid q$ a place in $F$. For the completion $F_v$ of $F$ at the place $v$, we denote by $\cO_v$ the integer ring and fix a uniformizer $\varpi_v$.  We define the spherical Hecke algebra $\cH_v^{\rm sph}$ for $\GSp_{2g}(F_v)$  with coefficients in $\ZZ$ to be the algebra of $\mathbf{Z}$-valued functions on $\GSp_{2g}(F_v)$ that are bi-invariant under $\GSp_{2g}(\mathcal{O}_v)$. It is generated by the characteristic functions on the following double cosets:

  \[T_{v,1}=\GSp_{2g}(\cO_v)\begin{pmatrix}
                             I_g & \\
& \varpi_vI_g \end{pmatrix}\GSp_{2g}(\cO_v),\]

 \[T_{v,i}=\GSp_{2g}(\cO_v)\begin{pmatrix}
                             I_{g-i+1} & & & \\
& \varpi_vI_{i-1}& & \\& & \varpi_v^2I_{g-i+1}\\& & & \varpi_vI_{i-1} \end{pmatrix}\GSp_{2g}(\cO_v), \quad 2\leq i\leq g,\] 
\[
S_v = \varpi_v \GSp_{2g}(\mathcal{O}_v).
\]

\subsubsection{Weights and automorphic sheaves}\label{weight}

Through the end of this section, let $\cM$ denote  $\vec{\cM}_{H^p}$, $\vec{\cM}_{H^p}^{\rm ord}$, or $\cM_H$. 

We again denote the universal abelian scheme over $\cM$ by $\cA$, and denote by $\omega$ the pull-back of $\Omega_{\cA/\cM}^1$ along the unit section. We remark that $\omega$ is locally free over $\cO_{\cM}$, but is not locally free as an $\cO_{\cM}\otimes_{\ZZ}\cO_F$-module if $p$ is ramified in $F$,  for the integral models.

We only give the construction of  automorphic sheaves of $\cM_H$,  and those  of $\cM=\vec{\cM}_{H^p}$ when $p$ is a \emph{good} prime for the moduli, so that $\omega$ is locally free over $\cO_{\cM}\otimes_{\ZZ}\cO_F$. The latter is enough for the auxiliary moduli. The  automorphic  sheaves over $\vec{\cM}_{H^p}$ and $\vec{\cM}_{H^p}^{\rm ord}$ in the general case are then defined by restriction via the closed immersion in Theorem \ref{prmod} and the inclusion $\vec{\cM}_{H^p}^{\rm ord}\subset \vec{\cM}_{H^p}\otimes_{\ZZ_{(p)}}\ZZ_{(p)}[\zeta_{p^{r_H}}]$. We refer the reader to Chapter 8 \cite{La12} for more details, including the cases with level structure at $p$.

Let $T_{g/\mathbf{Z}}$ be the standard diagonal maximal torus of $\GSp_{2g/\mathbf{Z}}$. Put $G = \Res_{\mathbf{Z}}^{\mathcal{O}} \GSp_{2g/\mathbf{Z}}$ and $T=\Res_{\mathbf{Z}}^{\mathcal{O}} T_{g/\mathbf{Z}}$. Take the standrad Borel $B$ of $G$ with unipotent radical $U$ and identify $T=B/U$. 
Let $M$ be the Levi of  the standard Siegel parabolic of $G$. Then $M \supset T$.

Consider a character \[\kappa: T  \lra \mathbf{G}_m.\] We may regard $\kappa$ as a character of $B \cap M$ which is trivial on $U \cap M$.  The character $\kappa$ is called \emph{dominant} with respect to $B$, if the induced representation $\mathrm{Ind}_{B \cap M}^M\kappa^{-1}$ inside the rational functions of  the scheme $(M/U \cap M)$ is non-zero. The Bruhat-Tits decomposition shows that the subspace $(\mathrm{Ind}_{B\cap M}^M\kappa^{-1})^{U \cap M}$ is one dimensional, and $T$ acts on a generator by $-w_0\kappa$, where $w_0$ is the longest element in the Weyl group (with respect to $T$). The $M$-translation of the generator generates a sub-representaton \[\rho_{\kappa}^{*}\subset \mathrm{Ind}_{B \cap M}^M\kappa^{-1},\] where an element $m$ in the standard Levi $M$ acts as $m\cdot f(x)=f(m^{-1}x)$. The $R$-dual $\rho_{\kappa}$ of $\rho_{\kappa}^{*}$  is called the \emph{rational representation of highest weight} $\kappa$, which has the universal property that for any $M$-module $X$, 
\[\Hom_M(\rho_{\kappa},X)\simeq \Hom_M(X^*, \rho_{\kappa}^{*})\simeq \Hom_B(X^*,-\kappa)\simeq \Hom_B(\kappa, X).\]

We define the \emph{automorphic sheaf} of weight $\kappa$ on $\cM$ to be
 \[\omega^{\kappa}=\pi_*\cO_{M(\omega)/U\cap M(\omega)}[\kappa]\] for $\pi: M(\omega)/U\cap M(\omega)\ra \cM$.  
The sections of $\omega^{\kappa}$ are functions on $\mathrm{Isom}_{\cM}(\cO_{\cM}^{dg},\omega)$ such that \[f(\phi m)=\rho_{\kappa}^*(m)f(\phi),\quad \forall \phi\in \mathrm{Isom}_{\cM}(\cO_{\cM}^{dg},\omega),m\in M.\] 

The construction above then provides the automorphic sheaves on $\cM_H$ and the ones on the auxiliary moduli in the integral cases, and then those on the integral models $\cM=\vec{\cM}_{H^p}$,  $\vec{\cM}_{H^p}^{\rm ord}$ without the assumption that $p$ is a good prime for the moduli problems.  We always denote the automorphic sheaves over $\cM$ by the same symbol $\omega^{\kappa}$.
 
By the results of Chapter 8 \cite{La12}, the automorphic sheaf   $\omega^{\kappa}$ extends from the moduli schemes to the total (resp. partial) compactifications in a canonical way, which is compatible with the restrictions to the ordinary loci of the total objects.

\subsubsection{Geometric correspondences}\label{corr}
As in the previous section, we may assume $p$ is a good prime for the moduli $\cM$. 

Let $\fa$ be an ideal of $\cO$.   Let $\cM^{\fa}$ be the moduli stack of isogenies between
objects in $\cM$, that is, the algebraic stack representing the functor $\cM^{\fa}$ which assigns to any  base scheme $S$ over $\QQ$ (resp. $\ZZ_{(p)}$,  resp. $\ZZ_{(p)}[\zeta_{p^{r_H}}]$) the
category in groupoids in which an object is an isogeny \[f: A\ra B\] between
two polarized abelian schemes with endomorphisms and  level structures $(A,i_A,\lambda_A)$ and $(B,i_B,\lambda_B)$, whose kernel is (\'etale locally) $\cO$-linearly isomorphic to $(\cO/\fa\cO)^g$ and intersects with (the image of) the level structure only along the unit section, is compatible with the $\cO$-endomorphisms, and  respects the polarizations on both sides. 

Here we obtain the representability of the functor $\cM^{\fa}$ by the use of the fact that  $\cM$ is representable and by the theory of Hilbert schemes (cf. P. 251 \cite{FC}). In particular, since  $H$ is assumed to be neat, the functor $\cM^{\fa}$ is represented by a quasi-projective scheme over $\QQ$ (resp. $\ZZ_{(p)}$, resp. $\ZZ_{(p)}[\zeta_{p^{r_H}}]$), which is denoted by the same symbol, as usual. The universal isogeny over $\cM^{\fa}$  is denoted by $\mathcal{I}^{\fa}$. Assigning such an isogeny to its source (resp. target), we have two
natural projections

\[\cM\stackrel{\pi_{1,\fa}}{\longleftarrow}\cM^{\fa}\stackrel{\pi_{2,\fa}}{\lra}\cM,\]  whose restrictions to any connected component $Z$
of $\cM^{\fa}$ are proper, by the valuative criterion. 

In the case that  $p$ is invertible in the base scheme $S$, the two projections \[\pi_{i,(p)}:\cM^{(p)}\ra \cM, \quad i=1,2\] are finite \'etale. 
In this case, for  $v\mid q$ a prime ideal in $\cO$, one has the bijection between the connected components of $\cM^{v}$ and the  double cosets $\gamma_v$ in the spherical Hecke algebra $\cH_v^{\rm sph}$.  Denote the corresponding connected component of  $\cM^{v}$ by $\cM^{\gamma_v}$, over which the universal isogeny is said to be of type $\gamma_{v}$.   We have the two projections \[\pi_{i,\gamma_v}:\cM^{\gamma_{v}}\ra \cM, \quad i=1,2,\] of type $\gamma_{v}$.


For $\cM=\vec{\cM}_{H^p}^{\rm ord}, \vec{\cM}_{H^p}$ over a scheme $S$ in characteristic $p$,  and $v|p$ a prime ideal in $\cO$, we again have the   connected component of  $\cM^{\gamma_v}$ and the two projections \[\pi_{i,\gamma_v}:\cM^{\gamma_{v}}\ra \cM, \quad i=1,2,\] of type $\gamma_{v}$. (We refer the reader to Ch. VII \cite{FC} for details on the facts above.)

In the two cases above, consider the commutative diagram
\[\CD
  \mathcal{I}^{\fa} @>  >> \mathcal{A} \\
  @V f_{\fa} VV @V f VV  \\
  Z=\cM^{\fa} @>\pi_{i,\fa}>> \cM
\endCD \]

Over the base $S$, we have a natural map of
$\cO\otimes_{\ZZ}\cO_{Z}^{\times}$-torsors
\[\pi_{2,\fa}^*\omega=
\pi_{2,\fa}^*(f_*\Omega_{\mathcal{A}/\cM}^1)\ra
f_{\fa*}\Omega_{\mathcal{I}^{\fa}/Z}^1\stackrel{\sim}{\ra}f_{\fa*}(\pi_{1,\fa}^*\Omega_{\mathcal{A}/\cM}^1)\stackrel{\sim}{\ra}
\pi_{1,\fa}^*\omega, \]hence the induced map
\[\theta: \pi_{2,\fa}^*\omega^{\kappa}\lra \pi_{1,\fa}^*\omega^{\kappa}.\] Applying $\pi_{1,\fa *}$ and
composing with the trace map \[\mathrm{tr}:
\pi_{1,\fa *}\pi_{1,\fa}^*\omega^{\kappa}\ra
\omega^{\kappa},\] we obtain the map
\[\pi_{1,\fa *}\pi_{2,\fa}^*\omega^{\kappa}\stackrel{\pi_{1,\fa *}\theta}{\longrightarrow}
\pi_{1,\fa *}\pi_{1,\fa}^*\omega^{\kappa}\stackrel{\rm tr}{\lra}
\omega^{\kappa}.\]Composing the natural map
\[\omega^{\kappa}\ra
\pi_{1,\fa *}\pi_{2,\fa}^*\omega^{\kappa}\] with the one above and
taking global sections, one gets the desired endomorphism
\[T_{\fa}: H^0(\cM_R,\omega^{\kappa})\ra
 H^0(\cM_R,\omega^{\kappa}),\] which will be denoted by $U_{(p)}$ in the case $\fa=(p)$. We remark that the Hecke operator  $U_{(p)}$ corresponds to the product of the double cosets $T_{v,1}, v|p$.
 
We have the same construction for $Z=\cM^{\gamma_v}$, the connected component of $\cM^{v}$ of type $\gamma_v$. In these cases, the Hecke operators  corresponding to the double cosets $T_{v,i}$ (resp. $S_v$) will be denoted by $T_{v,i}$  (resp. $S_v$) again.

\subsection{Hasse invariants  and liftings}

 \subsubsection{Hasse invariants on abelian schemes in characteristic $p$}
 
 Let  $A$ be an abelian scheme over $S$, a scheme in characteristic $p$. We have $A^{(p)}$, the pull-back of  $A\ra S$ via the absolute Frobenius $\mathrm{Frob}_S$ on $S$, and $V_{A/S}: A^{(p)}\ra A$,  the Verschiebung isogeny. The latter then induces the map \[\mathcal{C}_{A/S}: \Omega_{A/S}^1\ra \Omega_{A^{(p)}/S}^1\simeq \mathrm{Frob}_S^*\Omega_{A/S}^1 ,\] whose highest exterior power gives
 \[\mathcal{C}_{A/S}: \det\omega_{A/S} \ra (\det \omega_{A/S})^p ,\]hence a section $h\in H^0(A,(\det\omega_{A/S})^{\otimes (p-1)})$.   Applying this to the universal abelian scheme on the special fibre of $\vec{\cM}_{H^p}$,  we then have a global section \[h\in H^0((\vec{\cM}_{H^p})_{\mathbf{F}_p},(\det\omega)^{\otimes p-1}),\] which is known as the Hasse invariant of the moduli space. We have its extensions to $\vec{\cM}_{H^p}^{\rm tor}$ and $\vec{\cM}_{H^p}^{\rm min}$, and   denote them by $h$ again. 
 The reader is referred  to Section 6.3 \cite{La12} for more details.
 
 \begin{rk}\label{vanish}

For  $A/S$ an abelian scheme of dimension $n$,  the Hasse invariant $h(A)$ is non-vanishing if and only if $A$ is ordinary, which means that $A[p]$ has $p^n$ elements at every geometric point of $S$.     
\end{rk}

  \begin{lemma}\label{func}
 
Recall the notation from Section \ref{corr}. The natural map of sheaves on  $(\vec{\cM}_{H^p}^{\fa})_{\mathbf{F}_p}$ (resp. $(\vec{\cM}_{H^p}^{\gamma_v})_{\mathbf{F}_p}$)  
 \[\theta: \pi_{2}^*(\det\omega)^{\otimes p-1}\ra \pi_{1}^*(\det\omega)^{\otimes p-1}\] satisfies
 \[\theta(\pi_{2}^*h)=\pi_{1}^*h.\]   
 
 \end{lemma}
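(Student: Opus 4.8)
The plan is to deduce the identity from the functoriality of the Verschiebung, which is what produces the Hasse invariant in the first place. Throughout, $\det$ means the top exterior power $\bigwedge^{dg}$ over the structure sheaf. Write $S$ for $(\vec{\cM}_{H^p}^{\fa})_{\mathbf{F}_p}$; the case of $(\vec{\cM}_{H^p}^{\gamma_v})_{\mathbf{F}_p}$ is entirely analogous, as $\cM^{\gamma_v}$ is a connected component of $\cM^{v}$, the instance $\fa=v$ of $\cM^{\fa}$. Let $\phi\colon A_1\to A_2$ be the universal isogeny, with $A_i=\pi_i^*\cA$, so $\omega_{A_i}:=\pi_i^*\omega$ is the pull-back of $\Omega^1_{A_i/S}$ along the unit section, and $\pi_i^*h=h(A_i)$ since the Hasse invariant commutes with base change (it is built from the Verschiebung, which does). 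By the definition of $\cM^{\fa}$ (resp. $\cM^{\gamma_v}$) the kernel of $\phi$ is, étale locally on $S$, the constant group scheme $\underline{(\cO/\fa\cO)^g}$; hence $\phi$ is finite étale, $\phi^*\colon\omega_{A_2}\to\omega_{A_1}$ is an isomorphism of locally free $\cO_S$-modules of rank $dg$, and $\theta_0:=\det\phi^*\colon\pi_2^*\det\omega\to\pi_1^*\det\omega$ is an isomorphism whose $(p-1)$-st tensor power is, by the construction recalled in Section \ref{corr}, the map $\theta$ of the statement.

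Next I would invoke the functoriality of the Verschiebung: for a homomorphism of abelian schemes $\phi\colon A_1\to A_2$ over an $\mathbf{F}_p$-scheme $S$ one has $\phi\circ V_{A_1/S}=V_{A_2/S}\circ\phi^{(p)}$, where $\phi^{(p)}\colon A_1^{(p)}\to A_2^{(p)}$ is the Frobenius pull-back of $\phi$. Pulling back invariant differentials and using the canonical identifications $\omega_{A_i^{(p)}}\simeq\mathrm{Frob}_S^*\omega_{A_i}$ — under which $V_{A_i/S}^*$ becomes $\mathcal{C}_{A_i/S}$ and $(\phi^{(p)})^*$ becomes $\mathrm{Frob}_S^*(\phi^*)$ — one obtains the commutative square
\[\CD
\omega_{A_2} @>\phi^*>> \omega_{A_1}\\
@V\mathcal{C}_{A_2/S}VV @VV\mathcal{C}_{A_1/S}V\\
\mathrm{Frob}_S^*\omega_{A_2} @>\mathrm{Frob}_S^*\phi^*>> \mathrm{Frob}_S^*\omega_{A_1}
\endCD\]
Applying $\det$ and using that $\det\mathrm{Frob}_S^*\omega_{A_i}=\mathrm{Frob}_S^*\det\omega_{A_i}\simeq(\det\omega_{A_i})^{\otimes p}$ — the very identification under which $\det\mathcal{C}_{A_i/S}\colon\det\omega_{A_i}\to(\det\omega_{A_i})^{\otimes p}$ is, by definition, multiplication by the Hasse section $h(A_i)\in H^0(S,(\det\omega_{A_i})^{\otimes p-1})$, and under which $\det\mathrm{Frob}_S^*\phi^*$ becomes $\theta_0^{\otimes p}$ — the square becomes the identity of maps $\det\omega_{A_2}\to(\det\omega_{A_1})^{\otimes p}$
\[
\big(\,\cdot\,h(A_1)\big)\circ\theta_0\;=\;\theta_0^{\otimes p}\circ\big(\,\cdot\,h(A_2)\big).
\]

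Finally I would read off the assertion by evaluating this identity on a local generator $\xi$ of the line bundle $\det\omega_{A_2}$: in $(\det\omega_{A_1})^{\otimes p}$ one finds
\[
h(A_1)\cdot\theta_0(\xi)\;=\;\theta_0^{\otimes p}\!\big(h(A_2)\cdot\xi\big)\;=\;\theta_0^{\otimes p-1}\!\big(h(A_2)\big)\cdot\theta_0(\xi),
\]
using the multiplicativity $\theta_0^{\otimes p}(a\cdot b)=\theta_0^{\otimes p-1}(a)\cdot\theta_0(b)$. Since $\phi$ is étale, $\theta_0$ is an isomorphism, so $\theta_0(\xi)$ is a local generator of $\det\omega_{A_1}$ and may be cancelled; this yields $h(A_1)=\theta_0^{\otimes p-1}(h(A_2))$, first locally on the trivializing cover and then globally, which is the asserted equality $\pi_1^*h=\theta(\pi_2^*h)$.

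Every step above is routine once the ingredients are assembled; what needs care is only the bookkeeping of the canonical isomorphisms $\omega_{A^{(p)}}\simeq\mathrm{Frob}_S^*\omega_A$, $\det\mathrm{Frob}_S^*\omega_A\simeq(\det\omega_A)^{\otimes p}$ and $(\phi^{(p)})^*\simeq\mathrm{Frob}_S^*\phi^*$, so that the two displays genuinely commute, together with the use of the étaleness of the universal isogeny — which is forced by the moduli problems defining $\cM^{\fa}$ and $\cM^{\gamma_v}$ — to license the cancellation in the last step. For a non-étale isogeny, such as a relative Frobenius, one would have $\theta_0=0$ and the conclusion would fail outright, so this point is unavoidable.
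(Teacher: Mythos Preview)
Your proof is correct and follows essentially the same approach as the paper: both arguments reduce the claim to the functoriality of the Verschiebung/Cartier operator together with the \'etaleness of the universal isogeny, the latter being what permits cancellation of the local generator. The paper carries out the computation in coordinates (choosing bases $\mathbf{v},\mathbf{v}'$ of the invariant differentials and comparing $h(A,\mathbf{v})$ with $h(B,\mathbf{v}')$), whereas you package the same calculation as a commutative square; the content is identical.
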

 
 \begin{proof}
 
 This is by the functoriality of the Cartier operator. 
 
 Let $R$ be an $\mathbf{F}_p$-algebra. Note that \[\theta(\pi_{2}^*h)(A\ra B, \mathbf{v}, \mathbf{v}')=h(B, \mathbf{v}'), \quad (\pi_{1}^*h)(A\ra B, \mathbf{v}, \mathbf{v}')=h(A,\mathbf{v}),\] where $\mathbf{v}$ (resp. $\mathbf{v}'$) is a chosen basis of $H^0(A, \Omega_{A/R}^1)$ (resp. $H^0(B, \Omega_{B/R}^1)$) so that $\pi_2^*\mathbf{v}=\mathbf{v}'$. On the other hand, we have 
 \[h(B, \mathbf{v}')\mathrm{Frob}_R^*(\det\mathbf{v})=h(B, \mathbf{v}')\mathrm{Frob}_R^*(\pi_{2}^*(\det\mathbf{v}'))\]\[=(\pi_{2}^{(p)})^*\mathcal{C}_{B/R}(\det\mathbf{v}')=\mathcal{C}_{A/R}(\pi_{2}^*(\det\mathbf{v}')),\]
 where the last equality is obtained by the \'etaleness of the projection $\pi_{2}$.
 
 Now the result follows, as by definition \[\mathcal{C}_{A/R}(\pi_{2}^*(\det\mathbf{v}'))=\mathcal{C}_{A/R}(\det\mathbf{v})=h(A,\mathbf{v})\mathrm{Frob}_R^*(\det\mathbf{v}).\]\end{proof}
 
 \subsubsection{Lifting Hasse invariants to characteristic zero}

Consider the Hasse invariant \[h\in H^0((\vec{\cM}_{H^p}^{\rm  min})_{\mathbf{F}_p},   (\det\omega)^{\otimes p-1}).\]  Since the line bundle $(\det\omega)^{\otimes p-1}$ on $(\vec{\cM}_{H^p}^{\rm min})_{\mathbf{F}_p}$ is ample, the line bundle $(\det\omega)^{\otimes k(p-1)}$ is very ample for sufficiently large integer $k$.   
 \begin{prop}\label{lifttochar0}
 For $k\in \ZZ_{\geq 0}$ big enough, the section $h^k\in H^0((\vec{\cM}_{H^p}^{\rm min})_{\mathbf{F}_p}, (\det\omega)^{\otimes (p-1)k})$ lifts to a section \[\widetilde{h^k}\in H^0(\vec{\cM}_{H^p}^{\rm min},   (\det\omega)^{\otimes (p-1)k}).\]
 \end{prop}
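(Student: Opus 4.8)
The plan is to reduce the statement to a standard cohomology-vanishing argument on the projective scheme $\vec{\cM}_{H^p}^{\rm min}$ over $\ZZ_{(p)}$. Write $\cL = (\det\omega)^{\otimes(p-1)}$ for the relevant line bundle on $\vec{\cM}_{H^p}^{\rm min}$, and for $k\geq 1$ let $\cL_k = \cL^{\otimes k}$. Since $\vec{\cM}_{H^p}^{\rm min}$ is proper and flat over $\ZZ_{(p)}$ and $\cL$ restricts to an ample line bundle on the special fibre (by the cited results of \cite{La12}), $\cL$ is ample on the whole scheme; hence $\cL_k$ is very ample for $k$ large, and moreover $H^1(\vec{\cM}_{H^p}^{\rm min}, \cL_k) = 0$ for $k$ sufficiently large by Serre vanishing over the Noetherian base $\ZZ_{(p)}$. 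The obstruction to lifting $h^k$ from the special fibre to the total space lives precisely in such an $H^1$: this is the first step I would carry out.

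Concretely, consider the short exact sequence of sheaves on $\vec{\cM}_{H^p}^{\rm min}$
\[
0 \lra \cL_k \stackrel{p}{\lra} \cL_k \lra \cL_k \otimes_{\ZZ_{(p)}} \FF_p \lra 0,
\]
where the first map is multiplication by $p$ (injective because $\vec{\cM}_{H^p}^{\rm min}$ is flat over $\ZZ_{(p)}$, so $\cL_k$ has no $p$-torsion), and the quotient is the restriction of $\cL_k$ to the special fibre $(\vec{\cM}_{H^p}^{\rm min})_{\FF_p}$, whose global sections contain $h^k$. Taking the long exact sequence in cohomology, the section $h^k \in H^0((\vec{\cM}_{H^p}^{\rm min})_{\FF_p}, \cL_k)$ lifts to $H^0(\vec{\cM}_{H^p}^{\rm min}, \cL_k)$ as soon as its image under the connecting map $\delta: H^0((\vec{\cM}_{H^p}^{\rm min})_{\FF_p}, \cL_k) \to H^1(\vec{\cM}_{H^p}^{\rm min}, \cL_k)$ vanishes. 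By Serre vanishing applied to the ample $\cL$ on the projective $\ZZ_{(p)}$-scheme $\vec{\cM}_{H^p}^{\rm min}$, there is an integer $k_0$ such that $H^1(\vec{\cM}_{H^p}^{\rm min}, \cL_k) = 0$ for all $k \geq k_0$; for such $k$ the connecting map is automatically zero and a lift $\widetilde{h^k}$ exists. This is the core of the argument.

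A few points need care in the write-up. First, one should confirm that the $k$ in the Proposition may be taken large \emph{simultaneously} for the very-ampleness already invoked in the paragraph preceding the statement and for the $H^1$-vanishing here; since both are satisfied for all sufficiently large $k$, intersecting the two cofinite conditions suffices. Second, the flatness of $\vec{\cM}_{H^p}^{\rm min}$ over $\ZZ_{(p)}$ (Proposition \ref{cptchar0}(2)) is what makes multiplication by $p$ injective on $\cL_k$ and identifies the cokernel with the restriction to the special fibre; this should be stated explicitly. Third, strictly speaking Serre vanishing is usually quoted for projective schemes over a field or an affine Noetherian base — here the base $\Spec \ZZ_{(p)}$ is affine Noetherian, so the standard statement (EGA III, 2.2.1, or \cite{Hartshorne} III.5.3 suitably adapted) applies directly, and I would cite it in that form.

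The main obstacle I anticipate is not the cohomological mechanism, which is routine, but rather making sure the ampleness of $(\det\omega)^{\otimes(p-1)}$ — and hence of its powers — is available over the integral base and not merely on the special fibre. The point is that $\vec{\cM}_{H^p}^{\rm min}$ is by construction (Proposition \ref{cptchar0}(2)) projective over $\ZZ_{(p)}$ and $\det\omega$ is the natural polarizing bundle coming from the minimal compactification; by the results of \cite{La12} it is relatively ample over $\ZZ_{(p)}$, so its restriction to every fibre is ample and Serre vanishing over the Noetherian base $\ZZ_{(p)}$ goes through uniformly. Once this input is cited correctly, the rest of the proof is the three-line long-exact-sequence computation above.
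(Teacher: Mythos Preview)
Your proposal is correct and matches the paper's own argument: the paper simply invokes Serre vanishing to get $H^1(\vec{\cM}_{H^p}^{\rm min},(\det\omega)^{\otimes(p-1)k})=0$ for $k\gg 0$, hence surjectivity of the restriction map on $H^0$. Your write-up is more detailed (making the multiplication-by-$p$ sequence and the flatness/ampleness inputs explicit), but the method is the same.
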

\begin{proof} 

It is standard to show by Serre vanishing that \[H^1(\vec{\cM}_{H^p}^{\rm  min},  (\det\omega)^{\otimes (p-1)k})=0\] when $k$ is sufficiently large, which in turns gives the surjectivity of
\[H^0(\vec{\cM}_{H^p}^{\rm min},  (\det\omega)^{\otimes (p-1)k})\ra H^0((\vec{\cM}_{H^p}^{\rm min})_{\mathbf{F}_p},  (\det\omega)^{\otimes (p-1)k}).\]\end{proof}

From now on, we fix such a lift as in Proposition \ref{lifttochar0} \[E:=\widetilde{h^{k_0}}\] such that $k_0\gg 0$ and $p\nmid k_0$.

 \section{Analytification of Siegel-Hilbert moduli schemes}
 
\subsection{Preliminary}

We recall certain definitions and results from \cite{KL} and \cite{Lu}, which will be applied to the rigid analytifications of the Siegel-Hilbert moduli schemes, as well as their formal models and the automorphic sheaves.
 
 \subsubsection{Relative compactness}
 
Let $K$ be a finite extension of $\QQ_p$ and $\cO_K$ the ring of integers. For $\fX$ a formal scheme over $\Spf \cO_K$, we denote by $\fX^{\rm rig}$ the rigid analytic space associated to it, and by $\fX_0$
its special fibre. For $\fU\subset \fX$ an admissible open, we let $]\fU_0[$ denote the tube of $\fU$, i.e. the pre-image in $\fX^{\rm rig}$ of the $\fU_0$ under the natural specialization $\fX^{\rm rig}\ra \fX_0$, which is surjective. If $f: X\ra Y$ is morphism of rigid spaces, we call a morphism of $\varpi$-adic $\cO_K$-flat formal schemes $\mathfrak{f}: \fX\ra \fY$ a formal model of $f$, if $f$ is the rigidification of $\mathfrak{f}$.  

\begin{defn}\label{relcpt}
Let $f: X\ra V$ be a quasi-compact morphism of rigid analytic spaces, and $U\subset X$ a quasi-compact (relative to $X$) admissible open. We say $U$ is \emph{relatively compact in $X$ over $V$}, denoted by \[U\Subset_V X,\] if there exists an admissible covering of quasi-compact subsets $\{V_i\}$ of $V$ such that locally (over each $V_i$) there is a closed $V$-immersion of $X$ into an $n$-dimensional unit ball $\mathbf{D}_V^n(1)$ under which $U$ maps into a ball $\mathbf{D}_V^n(\epsilon)$ for some $\epsilon<1$. 

If $V=\Sp K$, we simply write $U\Subset_V X$ as $U\Subset X$.
 \end{defn}
 
The notion of relative compactness in Definition \ref{relcpt} is independent of the choice of covering $\{V_i\}$, essentially by Raynaud's theorem that the category of quasi-compact rigid spaces is equivalent to that of quasi-compact admissible formal schemes localized by admissible blow-ups. 

  \begin{lemma}[2.1.8 \cite{KL}] \label{218} 
 
 Let $i: Y\hookrightarrow Y'$ and $j: X\hookrightarrow X'$ be admissible open inclusions, all of which are quasi-compact rigid spaces over the quasi-compact rigid space $V$, and $f: Y'\ra X'$ be a proper morphism. Suppose we have the following Cartesian diagram
 
\[\begin{CD}
Y @>f>> X\\
@VViV @ VVjV\\
Y'@>f>> X'
\end{CD}\] and suppose $X\Subset_VX'$. Then $Y\Subset_VY'$.

 \end{lemma}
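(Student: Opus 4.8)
The plan is to reduce the statement to a purely formal-model computation via Raynaud's equivalence, then transport the inclusion through the proper morphism $f$. First I would invoke the definition of $X\Subset_V X'$: after replacing $V$ by a member of an admissible covering by quasi-compact opens $\{V_i\}$, there is a closed $V$-immersion $X'\hookrightarrow \mathbf{D}_V^n(1)$ under which $X$ lands in $\mathbf{D}_V^n(\epsilon)$ for some $\epsilon<1$. Pulling $f$ back over each $V_i$ changes nothing about properness or the Cartesian square, so it suffices to argue over a single quasi-compact $V$; I will suppress the index.

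Next I would pass to formal models. By Raynaud's theorem, choose a $\varpi$-adic flat formal $\cO_K$-model $\mathfrak{f}\colon\mathfrak{Y}'\to\mathfrak{X}'$ of $f$, compatible with a formal model of the closed immersion $X'\hookrightarrow\mathbf{D}_V^n(1)$ and with formal models of the opens $\mathfrak{Y}\subset\mathfrak{Y}'$, $\mathfrak{X}\subset\mathfrak{X}'$ rigidifying $i$ and $j$; after admissible blow-ups we may assume $\mathfrak{Y}$ and $\mathfrak{X}$ are (the tubes of) open formal subschemes, that $\mathfrak{f}$ is proper, and that the square of formal schemes is Cartesian, hence $\mathfrak{Y}=\mathfrak{f}^{-1}(\mathfrak{X})$. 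The content of $X\Subset_V X'$ translates into the statement that the special fibre $\mathfrak{X}_0$ is contained in the open formal subscheme of the model of $\mathbf{D}_V^n(1)$ cut out by $\mathfrak{X}$ being ``bounded away'' from the unit sphere — concretely, $\mathfrak{X}_0$ does not meet the closure of the complement of a suitable open, i.e. $X$ is contained in a quasi-compact open whose closure in $X'$ is still quasi-compact and contained in $X'$.

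Then I would produce the required closed $V$-immersion for $Y'$. Compose $\mathfrak{f}$ with the model of $X'\hookrightarrow\mathbf{D}_V^n(1)$; since $\mathfrak{f}$ is proper and $\mathbf{D}_V^n(1)$ is (locally on $V$) projective over $V$, the graph-type construction embeds $\mathfrak{Y}'$ as a closed formal subscheme of $\mathbf{D}_V^m(1)$ for suitable $m$ (using that a proper morphism to a quasi-projective base, after blow-up, is projective). Under this embedding, because $\mathfrak{Y}=\mathfrak{f}^{-1}(\mathfrak{X})$ and $\mathfrak{f}$ is proper (so specialization is compatible: $\mathrm{sp}(f^{-1}(\mathfrak{X}_0))\subset$ the locus lying over $\mathrm{sp}^{-1}(\mathfrak{X}_0)$), the special fibre $\mathfrak{Y}_0$ maps into $\mathfrak{X}_0$, which sits inside $\mathbf{D}_V^n(\epsilon)$; pulling the coordinate bounds back through the projective embedding of $\mathfrak{Y}'$ shows $Y$ lands in a poly-ball of some radius $\epsilon'<1$, after possibly shrinking. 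This gives $Y\Subset_V Y'$.

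The main obstacle will be the third step: properness of $f$ is used twice and in a slightly subtle way — once to guarantee that the square remains Cartesian at the level of a \emph{single} good formal model (so that $\mathfrak{Y}=\mathfrak{f}^{-1}(\mathfrak{X})$ rather than merely birational to it), and once to get the projective embedding of $Y'$ from that of $X'$ together with control of where the special fibre goes. Concretely one must check that specialization commutes with the proper base change $\mathfrak{Y}=\mathfrak{f}^{-1}(\mathfrak{X})$ so that the ``smallness'' of $\mathfrak{X}_0$ inside the ambient ball forces the smallness of $\mathfrak{Y}_0$; this is where the hypothesis $X\Subset_V X'$ is genuinely consumed, and it is exactly the point treated in 2.1.8 of \cite{KL}, whose argument I would follow.
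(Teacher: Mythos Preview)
The paper does not prove this lemma; it is quoted directly from \cite{KL} (Lemma 2.1.8 there), and the statement carries that citation in its header with no accompanying proof. So there is no proof in the paper to compare your attempt against.

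Your sketch is broadly the right shape---pass to formal models via Raynaud, use properness of $f$ to control the pullback---and you yourself defer to \cite{KL} 2.1.8 at the key step, which is exactly what the paper does wholesale. One caution on your third step: properness of $\mathfrak{f}$ does not by itself yield a projective embedding of $\mathfrak{Y}'$ into a polydisc, and the ``graph-type construction'' you allude to needs more care (the actual argument in \cite{KL} works with L\"utkebohmert's characterization of relative compactness via formal models rather than manufacturing a new closed immersion of $Y'$). But since both you and the paper ultimately point to \cite{KL}, there is no substantive divergence.
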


 \subsubsection{Overconvergence}\label{overconvergence}

 Let $\bar{X}$ be a quasi-compact rigid space over $K$, with a formal model  $\bar{\fX}$. Let $D\subset \bar{\fX}_0$ be a Cartier divisor. Choose a finite covering $\{\fU_i\}_{i=1,\cdots,n}$ of $\bar{\fX}_0$ so that for any $i$ the ideal of  $D|_{\fU_i}$ is generated by a single section $h_i\in \cO_{\bar{\fX}_0}$. Choose for each $h_i$ a lifting $\tilde{h}_i\in\Gamma(\bar{\fX}, \cO_{\bar{\fX}})$. For any $r\in (|p|^{1/e},1]$, define
 \[\bar{X}(r)=\bigcup_{1\leq i\leq n}\{x\in \bar{X}||\tilde{h}_i|\geq r\},\]which is independent of the choice of $\tilde{h}_i$.

 \begin{prop}[2.3.2 \cite{KL}]\label{23}  If $\bar{X}(1)\Subset_{\bar{X}} X'$ for $X'$ a quasi-compact admissible open of $\bar{X}$, then for any $r$ close enough to $1$ we have $\bar{X}(r)\subset X'$.

 \end{prop}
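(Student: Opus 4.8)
The plan is to reduce the statement to a compactness argument on formal models via Raynaud's equivalence, which is already invoked right after Definition \ref{relcpt}. First I would fix a formal model $\bar{\fX}$ of $\bar X$ together with the covering $\{\fU_i\}$ and the lifts $\tilde h_i$ used to define $\bar X(r)$. Since $\bar X(1)\Subset_{\bar X}X'$, by definition there is a closed immersion (working over $\Sp K$, so no covering $\{V_i\}$ is needed) of $\bar X$ into a unit ball $\mathbf{D}^n(1)$ under which $\bar X(1)$ lands inside $\mathbf{D}^n(\epsilon)$ for some $\epsilon<1$; enlarging the ambient ball slightly, I may instead take a formal model $\bar{\fX}'$ of $X'$ fitting into an open immersion $\bar X\hookrightarrow X'$ refined by a map of formal models $\bar{\fX}\to\bar{\fX}'$, with $\bar{\fX}_0\to\bar{\fX}'_0$ identifying $\bar{\fX}_0$ with an open (in fact the whole) special fibre up to admissible blow-up. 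The essential point is that the tubes $\bar X(r)$ for $r$ approaching $1$ form a decreasing family of quasi-compact admissible opens whose intersection is exactly $\bar X(1)$.

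Next I would make precise the ``shrinking tube'' picture. On each $\fU_i$, the condition $|\tilde h_i|\ge r$ cuts out an affinoid subdomain $\fU_i(r)\subset \fU_i^{\rm rig}$, and $\bar X(r)=\bigcup_i \fU_i(r)$. As $r\uparrow 1$, we have $\bar X(r)\supset \bar X(r')$ for $r<r'$ and $\bigcap_{r<1}\bar X(r)=\bar X(1)$, because a point $x$ lies in $\bar X(1)$ iff some $|\tilde h_i(x)|\ge 1$, i.e. $=1$, which is the limit of the conditions $|\tilde h_i(x)|\ge r$. Now the complement $\bar X\setminus X'$ is a closed analytic subset (as $X'$ is admissible open), hence quasi-compact after pulling back to a suitable formal model; more usefully, $\{\bar X(r)\}_{r<1}$ together with $X'$ form an admissible covering of $\bar X$ — indeed every point of $\bar X$ either specializes into the zero locus of all the $h_i$, in which case it lies in $\bar X(1)\subset X'$, or has some $|\tilde h_i|>0$ and hence lies in $\bar X(r)$ for $r$ small. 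By quasi-compactness of $\bar X$ (everything here is quasi-compact by hypothesis), a finite subcovering suffices: $\bar X=X'\cup\bar X(r_1)\cup\cdots\cup\bar X(r_m)$, but since the $\bar X(r)$ are nested this just says $\bar X=X'\cup\bar X(r_0)$ for $r_0=\min_j r_j<1$.

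That last identity is not yet the claim; I want $\bar X(r)\subset X'$, not merely $\bar X(r)\cup X'=\bar X$. To upgrade, I would use the relative compactness hypothesis more seriously, as in \cite{KL} 2.3.2: choose a formal model $\bar{\fX}'$ of $X'$ and, using $\bar X(1)\Subset_{\bar X}X'$, a formal model $\fX''$ of $\bar X$ with a map $\fX''\to\bar{\fX}'$ such that the closure $\overline{\bar X(1)}$ in $\fX''$ specializes into the open $(\bar{\fX}')_0\hookrightarrow \fX''_0$ — this is exactly what relative compactness buys, by Raynaud's theorem. Then the closed formal subscheme $\fX''_0\setminus(\bar{\fX}')_0$ is disjoint from the specialization of $\bar X(1)$; since specialization is continuous and $\bar X(r)$ shrinks to $\bar X(1)$, for $r$ close enough to $1$ the tube $\bar X(r)$ still specializes into $(\bar{\fX}')_0$, hence $\bar X(r)\subset X'$. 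Concretely: the function $\sup_i|\tilde h_i|$ attains a value $<1$ identically on the closed set $\bar X\setminus X'$ after passing to the formal model (the zero locus of the $h_i$ on $\fX''_0$ meets $\bar X\setminus X'$ in the empty set, so some $h_i$ is a unit there), giving a uniform bound $\sup_i|\tilde h_i|\le r_1<1$ on $\bar X\setminus X'$, whence $\bar X(r)\subset X'$ for all $r>r_1$.

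The main obstacle I anticipate is making the passage ``specialization of $\bar X(r)$ lands in $(\bar{\fX}')_0$ for $r$ near $1$'' genuinely rigorous rather than merely plausible: one must commute the shrinking of the tube with the specialization map on a fixed formal model, and ensure the bound $\sup_i|\tilde h_i|<1$ on $\bar X\setminus X'$ is uniform. This is handled by choosing the formal models compatibly (via admissible blow-up, so that $\bar X\setminus X'$ and the divisor $D$ are both defined integrally and the $h_i$ generate the ideal of $D$ on the nose), after which it is a statement about a quasi-compact formal scheme and reduces to the observation that a coherent function nonvanishing on a closed subscheme is bounded below there in absolute value — which is where quasi-compactness of everything in sight, assumed throughout, is used.
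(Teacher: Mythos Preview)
The paper does not prove this proposition; it is quoted verbatim from \cite{KL}, Proposition 2.3.2, so there is no ``paper's own proof'' to compare against. I will therefore just assess your argument on its merits.

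There is a genuine gap. First, two local errors: you read $\bar X(1)\Subset_{\bar X}X'$ as relative compactness over $\Sp K$, but the subscript means the base is $\bar X$ itself (the map in Definition~\ref{relcpt} is the open immersion $X'\hookrightarrow\bar X$); and your sentence ``an open immersion $\bar X\hookrightarrow X'$'' has the inclusion reversed. More seriously, your claim that ``the zero locus of the $h_i$ on $\fX''_0$ meets $\bar X\setminus X'$ in the empty set, so some $h_i$ is a unit there'' is backwards. If you blow up so that $X'$ is the tube of an open $\fU'_0\subset\fX''_0$, then $\bar X\setminus X'$ is exactly the tube of the closed complement $\fX''_0\setminus\fU'_0$, and the inclusion $\bar X(1)\subset X'$ translates on special fibres to $\fX''_0\setminus\fU'_0\subset\pi^{-1}(D)$: every point of $\bar X\setminus X'$ specializes \emph{into} the vanishing locus of the $h_i$, so all $|\tilde h_i|<1$ there---pointwise, but with no uniform bound, since the tube of a closed subscheme is typically not quasi-compact.

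This is not a cosmetic issue: your formal-model argument, as written, uses only the containment $\bar X(1)\subset X'$ and never the relative compactness hypothesis in a substantive way. But the containment alone is insufficient. Take $\bar X$ the closed unit disc with coordinate $z$, $D=\{0\}$, so $\bar X(r)=\{|z|\ge r\}$; set $X'=\bar X(1)=\{|z|=1\}$. Then $\bar X(1)\subset X'$ trivially, yet $\bar X(r)\not\subset X'$ for any $r<1$. What fails, of course, is $\bar X(1)\Subset_{\bar X}X'$. Your outline needs to say precisely what extra structure on the formal model the hypothesis $\Subset_{\bar X}$ produces (in L\"utkebohmert's language: a model in which the closure of the reduction of $\bar X(1)$ is contained in the open corresponding to $X'$), and then use that closure condition, not merely the set-theoretic inclusion, to force the uniform bound on $|\tilde h_i|$ over $\bar X\setminus X'$.
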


 \subsection{Lifting Frobenius}

Let $K$ be a finite extension of $\QQ_p$.  Let $X$ be a scheme which is locally of finite type over $\cO_K$, and $\fX$ the formal completion of $X$ along its special fiber. Denote by $\fX^{\rm rig}$ the rigid fibre of $\fX$ via Raynaud's functor. There is a natural morphism between the rigid fibre $\fX^{\rm rig}$ and the analytification $X^{\rm an}:=(X\otimes_{\cO_K} K)^{\rm an}$ of $X$:\[\fX^{\rm rig}\ra X^{\rm an} ,\] which is an isomorphism if $X$ is proper.

\subsubsection{Passing to the rigid analytic picture}

We glue $\vec{\cM}_{H^p}\otimes_{\ZZ_{(p)}}\ZZ_{(p)}[\zeta_{p^{r_H}}]$ with $\vec{\cM}_{H^p}^{\rm ord,tor}$ (resp. $\vec{\cM}_{H^p}^{\rm ord,min}$) and denote the resulting scheme by $\bar{\cM}_{H^p}$ (resp. $\cM_{H^p}^{*}$).

Let $\fM_{H^p}^*$ (resp. $\bar{\fM}_{H^p}$) be  the   formal completion of $\cM_{H^p}^*$  (resp. $\bar{\cM}_{H^p}$)  along the special fiber over $p$.  
The same procedure of completion along the special fibre gives the universal semi-abelian schemes  \[\fA^{*}\ra \fM_{H^p}^{*},\quad\bar{\fA}\ra \bar{\fM}_{H^p}.\] Combining with rigidification, we get the universal semi-abelian scheme \[\fA^{*,\rm rig}\ra \fM_{H^p}^{*,\rm rig},\quad\bar{\fA}^{\rm rig}\ra \bar{\fM}_{H^p}^{\rm rig}.\]
Take $\fM_{H^p}$   to be the open formal subscheme of $\fM_{H^p}^{*}$   whose points are in  $\vec{\cM}_{H^p}$, which is hence equipped with the universal objects \[\fA= \fA^{*}|_{\fM_{H^p}}\ra \fM_{H^p}.\] Set $\fM_{H^p}^{\rm rig}$ to be the open subspace of $\bar{\fM}_{H^p}^{\rm rig}$ lying over $\vec{\cM}_{H^p}^{\rm an}$, which then comes with  the restriction \[\fA^{\rm rig}=\bar{\fA}^{\rm rig}|_{\fM_{H^p}^{\rm rig}}\ra \fM_{H^p}^{\rm rig}.\]

We have the analogous construction and notation for the ordinary locus, by adding the superscript $\rm ord$. 

\subsubsection{Canonical subgroup and Frobenius}

The complement $D$ of the ordinary locus of the special fibre $(\bar{\cM}_{H^p})_0$ of $\bar{\cM}_{H^p}$ is a Cartier divisor, which is the vanishing locus of the Hasse invariant $h$,  by Theorems \ref{prmod}, \ref{ordtor}. We apply the construction in Section \ref{overconvergence} to $\bar{X}=\bar{\cM}_{H^p}^{\rm rig}$, and then have for $r\in (|p|^{1/e},1]$ the quasi-compact admissible opens $\bar{\fM}_{H^p}^{\rm rig}(r)\subset \bar{\cM}_{H^p}^{\rm rig}$. In particular, we see by definition that $\bar{\fM}_{H^p}^{\rm rig}(1)=\bar{\fM}_{H^p}^{\rm ord, rig}$. 

It is elementary to check by Definition \ref{relcpt} and Corollary 5.11 \cite{Lu} that, for $r,s\in (|p|^{1/e},1)$ with $s<r$,  \begin{equation}\label{luk}\bar{\fM}_{H^p}^{\rm rig}(r)\Subset \bar{\fM}_{H^p}^{\rm rig}(s)\end{equation} and \[\bar{\fM}_{H^p}^{\rm ord, rig}\Subset \bar{\fM}_{H^p}^{\rm rig}(r).\]

One has the following result from \cite{Fa}. (cf.  4.1.3 \cite{AIP} for extension to semi-abelian schemes.)

\begin{theo}[Theorem 6, \cite{Fa}]\label{822}

For each $n\in \ZZ_{\geq 1}$ and $r$ sufficiently close to $1$ (depending on $n$), there is a canonical subgroup of level $n$ $\cH_{n}(r)\subset \bar{\fA}^{\rm rig}[p^n]$  the $p$-divisible group $G=\bar{\fA}^{\rm rig}[p^{\infty}]$ 
over $\bar{\fM}_{H^p}^{\rm rig}(r)$,  which is locally free of rank $p^{dg}$ over $\cO_F$, and whose restriction to the ordinary locus $\fM_{H^p}^{\rm ord, rig}$ is the multiplicative subgroup $\fA^{\rm rig}[p^n]^{\rm mult}\subset \fA^{\rm rig}[p^n]$.

Moreover, the level-$1$ canonical subgroup $\cH_{1}(r)$ is the kernel of the Frobenius on $G$, and for any $1\leq m\leq n$, $\cH_{n}(r)/\cH_{m}(r)$ is the canonical subgroup of $\bar{\fA}^{\rm rig}[p^n]/\cH_{m}(r)$ of level $n-m$. 

\end{theo}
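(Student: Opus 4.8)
The plan is to deduce the statement from the existence and uniqueness of canonical subgroups for a single ``nearly ordinary'' Barsotti--Tate group, applied fibrewise to the universal $p$-divisible group $G=\bar{\fA}^{\rm rig}[p^{\infty}]$ over the quasi-compact admissible opens $\bar{\fM}_{H^p}^{\rm rig}(r)$. First I would reduce to a local problem. A canonical subgroup, once it exists, is cut out by an intrinsic numerical condition --- it is the finite flat closed subgroup of a prescribed order maximising the Fargues degree, equivalently the one reducing to $\Ker F$ on the special fibre modulo $p^{1-w}$, where $w=v(h)$ --- so it is unique; hence the subgroups produced over an admissible affinoid covering of $\bar{\fM}_{H^p}^{\rm rig}(r)$ automatically glue. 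Using the relative compactness relations $\bar{\fM}_{H^p}^{\rm rig}(r)\Subset\bar{\fM}_{H^p}^{\rm rig}(s)$ of $(\ref{luk})$ together with Proposition $\ref{23}$, it suffices to perform the construction after replacing $r$ by any value closer to $1$, as often as the argument requires.

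For level $n=1$: over an affinoid $\Sp R\subset\bar{\fM}_{H^p}^{\rm rig}(r)$ the chosen lift of the Hasse invariant has valuation $\le v(r)$ at every point, so $G[p]$ is $p$-adically close to ordinary over $R$. Invoking the canonical subgroup theorem of Faltings (Theorem 6 of \cite{Fa}; alternatively the Harder--Narasimhan theory of finite flat group schemes, or the ramification-filtration constructions of Abbes--Mokrane and Andreatta--Gasbarri--Tian), once $v(r)$ lies below the relevant explicit threshold there is a unique finite flat closed subgroup $\cH_1(r)\subset G[p]$ of the same order as $\Ker F$ on the special fibre, reducing to $\Ker F$ modulo $p^{1-w}$. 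Since the defining condition is intrinsic, $\cH_1(r)$ is automatically stable under the $\cO_F$-action on $G$, and isotropic for the $\lambda$-Weil pairing (its orthogonal complement is again canonical of the same order, hence coincides with it), and its $\cO_F$-module structure and rank are inherited from those of $\Ker F$. On the ordinary locus $\bar{\fM}_{H^p}^{\rm ord,rig}=\bar{\fM}_{H^p}^{\rm rig}(1)$ one has $w=0$, $G$ decomposes into its multiplicative and \'etale parts, and $\Ker F=G[p]^{\rm mult}$; hence over $\fM_{H^p}^{\rm ord,rig}$ the group $\cH_1(r)$ is $\fA^{\rm rig}[p]^{\rm mult}$, as required.

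For higher level I would induct on $n$ using the standard degradation estimate: if $w=v(h(G))$ is small, then the Hasse invariant of $G/\cH_1(r)$ has valuation a fixed multiple of $w$ (of the shape $pw$, up to harmless normalisation constants), so for $r$ chosen close enough to $1$ \emph{depending on} $n$ the quotient $G':=G/\cH_1(r)$ still satisfies the level $(n-1)$ hypothesis over $\bar{\fM}_{H^p}^{\rm rig}(r)$. Defining $\cH_n(r)$ to be the preimage in $G[p^n]$ of the level $(n-1)$ canonical subgroup of $G'$ makes the nesting $\cH_m(r)\subset\cH_n(r)$ and the statement that $\cH_n(r)/\cH_m(r)$ is the level $(n-m)$ canonical subgroup of $\bar{\fA}^{\rm rig}[p^n]/\cH_m(r)$ formal, and $\cO_F$-stability together with the restriction to $\fA^{\rm rig}[p^n]^{\rm mult}$ on the ordinary locus propagate through the induction. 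It remains to extend $\cH_n(r)$ across the toroidal boundary of $\bar{\fM}_{H^p}^{\rm rig}$; there the universal object is a semi-abelian scheme whose $p$-divisible group is an extension of the ordinary ($p$-divisible group of a) torus by that of an abelian scheme, and one invokes the extension of the canonical subgroup to semi-abelian schemes (cf. 4.1.3 \cite{AIP}); gluing with the interior via uniqueness completes the construction.

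The main obstacle is the genuinely deep input in the level $n=1$ step: producing $\cH_1(r)$ of exactly the right order, together with the effective valuation bound on $v(r)$ under which it exists and reduces to $\Ker F$, is precisely Faltings' theorem and rests on hard $p$-adic Hodge theory / Lubin--Tate estimates near the ordinary locus, which is why in the body of the paper it is cited rather than reproved. Everything else --- uniqueness and gluing, $\cO_F$-equivariance and isotropy coming for free from canonicity, the inductive passage to level $n$ via the degradation of the Hasse invariant, and the semi-abelian/boundary extension --- is then formal or reduces to the cited references.
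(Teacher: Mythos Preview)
The paper gives no proof of this statement: it is stated as Theorem~6 of \cite{Fa} (with the extension to semi-abelian schemes attributed to 4.1.3 of \cite{AIP}) and simply quoted, with no argument supplied. So there is nothing in the paper to compare your proposal against beyond the bare citation.

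Your sketch is a reasonable outline of how the Fargues result is established --- local construction via a numerical/Harder--Narasimhan characterisation, uniqueness giving gluing and $\cO_F$-equivariance, induction on $n$ via the Hasse-invariant degradation estimate, and extension across the boundary --- and you correctly identify that the level-$1$ existence step is the genuine content, which is precisely why the paper cites rather than reproves it. In the context of this paper, however, the appropriate ``proof'' is simply the citation; your write-up goes well beyond what the authors intend here. One minor point: you invoke \eqref{luk} and Proposition~\ref{23} for the reduction to $r$ closer to $1$, but these are not actually needed to state or use Fargues' theorem --- the dependence of the threshold on $n$ is already part of the cited statement.
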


The multiplicative subgroup $\fA[p^n]^{\rm mult}\subset \fA[p^n]$ is a finite flat group scheme of order $p^{ndg}$.  One has, by the proof of 1.11.6 \cite{Ka78}, that $\fA/\fA[p^n]^{\rm mult}$   gives an element in  $\fM_{H^p}^{\rm ord}$. We thus have a canonical map 

\[\phi^n: \fM_{H^p}^{\rm ord}\ra \fM_{H^p}^{\rm ord}, \quad \fA\mapsto \fA/\fA[p^n]^{\rm mult}. \]

\begin{prop}\label{31} The morphism $\phi$ induces the following morphism  if $r$ is close enough to $1$
,  which is finite flat of degree $p^{ndg}$:

\[\overline{\phi^n}(r): \bar{\fM}_{H^p}^{\rm rig}(r)\ra \bar{\fM}_{H^p}^{\rm rig}(r^{p^n}).\]

\end{prop}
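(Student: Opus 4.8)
The plan is to build $\overline{\phi^n}(r)$ out of the canonical subgroup of Theorem \ref{822}, to verify that it maps into the asserted target by tracking the Hasse invariant under the quotient isogeny, and then to prove finite flatness by descending to formal models. Concretely: choose $r$ close enough to $1$ (depending on $n$) that the level-$n$ canonical subgroup $\cH_n(r)\subset\bar{\fA}^{\rm rig}[p^n]$ exists over $\bar{\fM}_{H^p}^{\rm rig}(r)$. It is a finite flat, $\cO$-stable subgroup scheme of order $p^{ndg}$, and it is totally isotropic and self-annihilating for the $\lambda$-Weil pairing: this holds on the ordinary locus, where $\cH_n(r)=\bar{\fA}^{\rm rig}[p^n]^{\rm mult}$, hence everywhere by Zariski density of the ordinary part in each connected component. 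The quotient $\bar{\fA}^{\rm rig}/\cH_n(r)$ — a semi-abelian scheme, using the extension of $\cH_n(r)$ over the boundary (cf. 4.1.3 \cite{AIP}) — therefore carries an induced $\cO$-action, an induced principal polarization, and, since $\bar{\fA}^{\rm rig}\to\bar{\fA}^{\rm rig}/\cH_n(r)$ is an isomorphism on prime-to-$p$ torsion, the level structure $\alpha_{H^p}$; this tuple defines a point of $\bar{\cM}_{H^p}^{\rm rig}$ and hence a morphism out of $\bar{\fM}_{H^p}^{\rm rig}(r)$ which restricts on $\fM_{H^p}^{\rm ord,rig}$ to the rigidification of $\phi^n$.

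To see the image lies in $\bar{\fM}_{H^p}^{\rm rig}(r^{p^n})$, I would reduce to $n=1$ via the tower relation of Theorem \ref{822} (namely $\cH_n(r)/\cH_1(r)$ is the level-$(n-1)$ canonical subgroup of $\bar{\fA}^{\rm rig}/\cH_1(r)$) and iterate. For $n=1$, $\cH_1(r)$ is the kernel of Frobenius on the reduction, so $\bar{\fA}^{\rm rig}/\cH_1(r)$ reduces mod $p$ to a Frobenius twist and its Hasse invariant is the $p$-th power of that of $\bar{\fA}^{\rm rig}$; translated into the function $\tilde h$ cutting out the ordinary divisor, $|\tilde h(\fA/\cH_1(r))|=|\tilde h(\fA)|^{p}$ throughout the admissible range of Hodge heights (the standard estimate, \cite{Fa}; see also \S 3 of \cite{AIP}). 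Iterating, $|\tilde h(\fA/\cH_n(r))|=|\tilde h(\fA)|^{p^n}\ge r^{p^n}$ on $\bar{\fM}_{H^p}^{\rm rig}(r)$, provided $r$ was chosen close enough to $1$ that the Hodge height remains admissible along the whole tower — which is precisely the assertion that $\overline{\phi^n}(r)$ takes values in $\bar{\fM}_{H^p}^{\rm rig}(r^{p^n})$.

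For finiteness and flatness, note first that $\bar{\fM}_{H^p}^{\rm rig}(r)$ and $\bar{\fM}_{H^p}^{\rm rig}(r^{p^n})$ are smooth rigid spaces over $K$ of the same pure dimension, being admissible opens of the smooth rigid space $\bar{\cM}_{H^p}^{\rm rig}$. The morphism $\overline{\phi^n}(r)$ is quasi-finite (a fibre is a moduli space of $\cO$-linear isogenies of a fixed degree onto a fixed abelian variety, hence finite). To promote quasi-finiteness to finiteness I would pass to formal models: over an admissible formal model $\bar{\fX}(r)$ of $\bar{\fM}_{H^p}^{\rm rig}(r)$, obtained by blowing up $\bar{\fM}_{H^p}$ along the ideal generated by a lift of $h$ and a power of $p$, the canonical subgroup spreads out to a finite flat $\cO$-group scheme (the integral content of \cite{Fa}), and the quotient construction gives a morphism of quasi-compact $\varpi$-adic formal schemes $\bar{\fX}(r)\to\bar{\fX}'$ that is quasi-finite and proper, hence finite; rigidifying, $\overline{\phi^n}(r)$ is finite, and therefore finite flat since a finite morphism between smooth rigid spaces of the same dimension is flat. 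The degree is locally constant, so it may be computed on each connected component along the dense ordinary locus, where $\overline{\phi^n}(r)$ is $\fA\mapsto\fA/\fA[p^n]^{\rm mult}$; there it equals $p^{ndg}$.

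I expect the finiteness — as opposed to mere quasi-finiteness — of $\overline{\phi^n}(r)$ to be the main obstacle: it is exactly the step where one must leave the rigid-analytic category and use that the canonical subgroup, a priori defined only on $\bar{\fM}_{H^p}^{\rm rig}(r)$, in fact extends to a finite flat group scheme over a formal model (together with the comparison of formal models via Raynaud's equivalence, as in \S 2 of \cite{KL}). The descent of the PEL data to the quotient and the Hasse-invariant bound controlling the target are, by contrast, fairly routine.
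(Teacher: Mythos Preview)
Your argument is essentially correct, but it takes a different route from the paper's. The paper does \emph{not} use the canonical subgroup (Theorem \ref{822}) to prove this proposition. Instead, it first defines $\overline{\phi^n}$ on the \emph{formal} ordinary locus $\bar{\fM}_{H^p}^{\rm ord}$ (not just its rigid fibre) by $\bar{\fA}\mapsto\bar{\fA}/\bar{\fA}[p^n]^{\rm mult}$, then rigidifies, and finally extends to the strict neighbourhood $\bar{\fM}_{H^p}^{\rm rig}(r)$ by invoking the argument of 3.1.7 of \cite{KL} together with 1.11.4 of \cite{Ka78}. The point is that once the map exists on the formal model of the ordinary locus, its special fibre is the $n$-th iterate of absolute Frobenius, so finite flatness of degree $p^{ndg}$ is immediate there; the extension to $r<1$ close to $1$, and the verification that the target is $\bar{\fM}_{H^p}^{\rm rig}(r^{p^n})$, are then handled by the overconvergence machinery of \S 2--3 of \cite{KL} (in particular the relative-compactness and L\"utkebohmert-type input already recorded in this paper as Proposition \ref{23} and the discussion around it).

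Your approach, by contrast, builds the map directly on $\bar{\fM}_{H^p}^{\rm rig}(r)$ via $\cH_n(r)$ and then descends to formal models to recover finiteness. This works, and the Hasse-invariant computation you sketch is correct, but it is heavier than necessary: the canonical subgroup is not needed here (it enters the paper only later, for the Igusa tower), and the step you flag as the main obstacle --- promoting quasi-finiteness to finiteness by spreading $\cH_n(r)$ to a formal model --- is precisely what the paper avoids by starting formally on the ordinary locus, where finiteness is free from the Frobenius description of the special fibre. In short: both arguments are valid, but the paper's is more economical, trading your explicit rigid-analytic construction for a citation to \cite{KL} and \cite{Ka78}.
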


\begin{proof}

One has the map $\overline{\phi^n}: \bar{\fM}_{H^p}^{\rm ord}\ra \bar{\fM}_{H^p}^{\rm ord}$ defined by $\bar{\fA}\mapsto \bar{\fA}/\bar{\fA}[p^n]^{\rm mult}$, and then the map $\overline{\phi^n}^{\rm rig}: \bar{\fM}_{H^p}^{\rm ord, rig}\ra \bar{\fM}_{H^p}^{\rm ord, rig}$ induced by the first one on the rigid fibre. 

The claim then follows from  the argument in the proof of 3.1.7 \cite{KL}, together with the observation on abelian schemes  in 1.11.4 \cite{Ka78}.
\end{proof}

\begin{coro}\label{ff}

For $r$ sufficiently close to $1$, the sheaf $\bar{\fA}^{\rm rig}[p^n]/\cH_{n}(r)$ is finite flat. 

\end{coro}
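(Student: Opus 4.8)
The plan is to realize $\bar{\fA}^{\rm rig}[p^n]/\cH_{n}(r)$ as the kernel of an isogeny of semi-abelian schemes over $\bar{\fM}_{H^p}^{\rm rig}(r)$ and then to invoke that such a kernel is finite locally free. Throughout one takes $r$ close enough to $1$ so that both Theorem \ref{822} and Proposition \ref{31} apply; one also notes at the outset that forming quotients of semi-abelian schemes by finite flat subgroups and taking kernels of isogenies is legitimate in the rigid-analytic setting, since one may work over affinoid pieces and use the finite flat group schemes available on a formal model via Raynaud's theory, exactly as in \cite{KL}.

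First I would record, from the construction in the proof of Proposition \ref{31} together with Theorem \ref{822}, that $\overline{\phi^n}(r)$ classifies the quotient $\bar{\fB}:=\bar{\fA}^{\rm rig}/\cH_{n}(r)$. Indeed, $\phi^n$ is by definition $\bar{\fA}\mapsto\bar{\fA}/\bar{\fA}[p^n]^{\rm mult}$ over the ordinary locus, and on $\bar{\fM}_{H^p}^{\rm rig}(r)$ the multiplicative subgroup $\bar{\fA}[p^n]^{\rm mult}$ is interpolated by the canonical subgroup $\cH_{n}(r)$ of Theorem \ref{822}. Hence $\bar{\fB}$ is the pullback along $\overline{\phi^n}(r)$ of the universal semi-abelian scheme, so it is a semi-abelian scheme over $\bar{\fM}_{H^p}^{\rm rig}(r)$ carrying the quotient isogeny $q:\bar{\fA}^{\rm rig}\to\bar{\fB}$ with $\ker q=\cH_{n}(r)$.

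Next, since $\cH_{n}(r)\subset\bar{\fA}^{\rm rig}[p^n]$, multiplication by $p^n$ on $\bar{\fA}^{\rm rig}$ kills $\ker q$, hence factors uniquely as $[p^n]_{\bar{\fA}^{\rm rig}}=\psi\circ q$ for a homomorphism $\psi:\bar{\fB}\to\bar{\fA}^{\rm rig}$. As $q$ and $[p^n]$ are isogenies, $\psi$ is faithfully flat with finite fibres, i.e.\ an isogeny (alternatively one may compare degrees, $\deg q=\#\cH_{n}(r)$ by Theorem \ref{822}). Because $q$ is an fppf epimorphism and $\ker q=\cH_{n}(r)\subset\bar{\fA}^{\rm rig}[p^n]=q^{-1}(\ker\psi)$, one gets $\ker\psi=q\bigl(\bar{\fA}^{\rm rig}[p^n]\bigr)=\bar{\fA}^{\rm rig}[p^n]/\cH_{n}(r)$ as fppf sheaves; since the kernel of an isogeny of semi-abelian schemes is finite locally free, this yields the corollary. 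One could also argue directly: $\bar{\fA}^{\rm rig}[p^n]$ is finite locally free, being the $p^n$-torsion of the $p$-divisible group $G=\bar{\fA}^{\rm rig}[p^\infty]$ of Theorem \ref{822}, while $\cH_{n}(r)$ is a finite locally free closed subgroup of it, and the fppf quotient of a finite locally free commutative group scheme by such a subgroup is finite locally free.

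The step I expect to be the main obstacle is this identification: that $\overline{\phi^n}(r)$ really classifies the quotient by the canonical subgroup, and is not merely an abstract finite flat self-map. This relies on the precise properties of $\cH_{n}(r)$ collected in Theorem \ref{822} --- that $\cH_{1}(r)$ is the Frobenius kernel, that $\cH_{n}(r)/\cH_{m}(r)$ is again the canonical subgroup of the appropriate quotient, and that $\cH_{n}(r)$ restricts to $\fA^{\rm rig}[p^n]^{\rm mult}$ on the ordinary locus --- together with the way $\overline{\phi^n}(r)$ is extended from the ordinary locus in the proof of Proposition \ref{31}.
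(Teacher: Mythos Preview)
Your proposal is correct and is essentially an unpacking of the paper's one-line proof, which simply cites Proposition~\ref{31}: the identification of $\overline{\phi^n}(r)$ as the map classifying $\bar{\fA}^{\rm rig}/\cH_n(r)$, together with the factorization $[p^n]=\psi\circ q$, is precisely the content behind that citation. Your alternative direct argument (quotient of a finite locally free commutative group scheme by a finite locally free closed subgroup) is in fact a shorter route that does not require Proposition~\ref{31} at all, only the finite flatness of $\cH_n(r)$ from Theorem~\ref{822}.
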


\begin{proof}
This is by Proposition \ref{31}.
\end{proof}
 
\subsection{The formal Igusa tower} \label{igusa}

Following the idea of Hida (see e. g. \cite{Hi}),  we define 
$\bar{\fM}_{H^pp^n}^{\rm ur,ord}$ to be the Galois cover of $\bar{\fM}_{H^p}^{\rm ord}$ trivializing the \'etale sheaf $\bar{\fA}[p^n]/\bar{\fA}[p^n]^{\rm mult}$. 
The pre-image of $\fM_{H^p}^{\rm ord}$ under the covering map is written as $\fM_{H^pp^n}^{\rm ur,ord}$.   
 We then have a proper map \[\bar{\fM}_{H^pp^n}^{\rm ur,ord}\ra \bar{\fM}_{H^p}^{\rm ord}\ra \fM_{H^p}^{*,\rm ord},\]for which the Stein factorization is written as $\fM_{H^pp^n}^{*,\rm ur, ord}$. 

Similarly as before, we have the universal semi-abelian scheme  \[\bar{\fA}_n\ra \bar{\fM}_{H^pp^n}^{\rm ur,ord}\] and  the universal abelian scheme \[\fA_n\ra \fM_{H^pp^n}^{\rm ur, ord}\] by restriction. Consider the associated rigid space $\bar{\fM}_{H^pp^n}^{\rm ur, ord,rig}$ and  the open subspace $\fM_{H^pp^n}^{\rm ur, ord, rig}$ being the intersection of $\bar{\fM}_{H^pp^n}^{\rm ur,ord,rig}$ with $\cM_{H}^{\rm ord,\rm an}=\vec{\cM}_{H}^{\rm ord,\rm an}$, where for $H$, $H_p$ has been chosen to provide the same level structure at $p$.  We then have the finite \'etale map \[\mathrm{Ig}_n: \bar{\fM}_{H^pp^n}^{\rm ur, ord,rig}\ra \bar{\fM}_{H^p}^{\rm ord, rig}, \] which is a Galois cover of $\bar{\fM}_{H^p}^{\rm ord, rig}$.

For $r$ close enough to $1$, we set \[\bar{\fM}_{H^pp^n}^{\rm ur, rig}(r)\rightarrow \bar{\fM}_{H^p}^{\rm rig}(r)\] to be the Galois cover of $\bar{\fM}_{H^p}^{\rm rig}(r)$ trivializing the finite flat sheaf $\bar{\fA}^{\rm rig}[p^n]/\cH_{n}(r)$ (see Corollary \ref{ff}). In particular, the ordinary locus $\fM_{H^pp^n}^{\rm ur,ord,rig}:=\bar{\fM}_{H^pp^n}^{\rm ur,rig}(1)$ is the rigid fibre of the formal Igusa tower $\fM_{H^pp^n}^{\rm ur,ord}$ constructed above, which justifies the notation. Set \[\fM_{H^pp^n}^{\rm ur, rig}(r)=\bar{\fM}_{H^pp^n}^{\rm ur, rig}(r)\cap \cM_{H}^{\rm an},\] for $r$ close enough to $1$. For $s<r$ with $s$ close enough to $1$, we have \begin{equation}\label{forcontinuity}
\bar{\fM}_{H^pp^n}^{\rm ur, rig}(r)\Subset \bar{\fM}_{H^pp^n}^{\rm ur, rig}(s),\end{equation} by Lemma \ref{218} and (\ref{luk}).

Again by the proof of 1.11.6 \cite{Ka78}, we have a well-defined map \[\phi_{n}: \fM_{H^pp^n}^{\rm ur,ord}\ra \fM_{H^pp^n}^{\rm ur,ord}, \quad \fA_n\mapsto \fA_n/\fA_n[p]^{\rm mult}. \]
 sitting in the following commutative diagram

\[\begin{CD}
\fM_{H^pp^n}^{\rm ur,ord} @>\phi_{n}>> \fM_{H^pp^n}^{\rm ur,ord}\\
@VV\mathrm{Ig}_nV @ VV\mathrm{Ig}_nV\\
\fM_{H^p}@>\phi^1>> \fM_{H^p}
\end{CD}\]

\begin{prop}\label{326}

 The map $\phi_{n}$ induces, for $r$ close enough to $1$, the following morphism which is finite flat of degree $p^{dg}$:
 
\[\bar{\phi}_{n}(r): \bar{\fM}_{H^pp^n}^{\rm ur, rig}(r)\ra \bar{\fM}_{H^pp^n}^{\rm ur, rig}(r^p).\]

\end{prop}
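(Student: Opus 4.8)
The plan is to deduce this from its analogue on the base, namely Proposition \ref{31} in the case $n=1$, by pulling back along the finite \'etale Igusa covers of Section \ref{igusa}. Recall first (the commutative square preceding the statement) that $\phi_n$ lies above $\phi^1\colon\fM_{H^p}\to\fM_{H^p}$, and that $\phi^1$ is the reduction of $\bar\fA\mapsto\bar\fA^{\rm rig}/\cH_1(r)$: by Theorem \ref{822} the level-$1$ canonical subgroup $\cH_1(r)\subset\bar\fA^{\rm rig}[p]$ over $\bar\fM_{H^p}^{\rm rig}(r)$ is the kernel of Frobenius and restricts to $\fA^{\rm rig}[p]^{\rm mult}$ over the ordinary locus. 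By Proposition \ref{31} with $n=1$, for $r$ close enough to $1$ this extends to a finite flat morphism of degree $p^{dg}$
\[
\overline{\phi^1}(r)\colon\ \bar\fM_{H^p}^{\rm rig}(r)\ \lra\ \bar\fM_{H^p}^{\rm rig}(r^p),\qquad \bar\fA\longmapsto \bar\fA^{\rm rig}/\cH_1(r),
\]
and I write $\lambda\colon\bar\fA^{\rm rig}\to\bar\fA^{\rm rig}/\cH_1(r)=:\bar\fA^{(1)}$ for the universal isogeny, which pulls back to the universal isogeny over $\bar\fM_{H^p}^{\rm rig}(r^p)$ (this isogeny extends the ordinary-locus one exactly as in the proof of Proposition \ref{31}).

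Next, recall from Section \ref{igusa} and Corollary \ref{ff} that $\bar\fM_{H^pp^n}^{\rm ur,rig}(r)\to\bar\fM_{H^p}^{\rm rig}(r)$ is the finite \'etale Galois cover classifying trivializations of the finite flat $\cO_F$-module sheaf $\bar\fA^{\rm rig}[p^n]/\cH_n(r)$ together with its induced symplectic datum, and likewise $\bar\fM_{H^pp^n}^{\rm ur,rig}(r^p)\to\bar\fM_{H^p}^{\rm rig}(r^p)$ for the corresponding sheaf of the universal abelian scheme over $\bar\fM_{H^p}^{\rm rig}(r^p)$; the canonical subgroups of $\bar\fA^{(1)}$ exist for $r$ close to $1$ depending on $n$, again by Theorem \ref{822}. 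Thus it suffices to construct, for such $r$, a canonical isomorphism over $\bar\fM_{H^p}^{\rm rig}(r)$
\[
\lambda_{*}\colon\ \bar\fA^{\rm rig}[p^n]/\cH_n(r)\ \stackrel{\sim}{\lra}\ \big(\overline{\phi^1}(r)\big)^{*}\!\big(\bar\fA^{\rm rig}[p^n]/\cH_n(r^p)\big)
\]
of $\cO_F$-module sheaves, compatible with the symplectic/Weil-pairing data and with the prime-to-$p$ level structures (being a $p$-power isogeny, $\lambda$ is an isomorphism away from $p$, so $\alpha_{H^p}$ transports for free). The right-hand side is canonically $\bar\fA^{(1)}[p^n]$ modulo its level-$n$ canonical subgroup, since both $\bar\fA^{(1)}$ and its canonical subgroups are pulled back from $\bar\fM_{H^p}^{\rm rig}(r^p)$. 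Given $\lambda_{*}$, a trivialization of the source yields one of the target, so the pullback of the cover $\bar\fM_{H^pp^n}^{\rm ur,rig}(r^p)$ along $\overline{\phi^1}(r)$ is canonically identified with $\bar\fM_{H^pp^n}^{\rm ur,rig}(r)$; the resulting Cartesian square exhibits the desired $\bar\phi_n(r)$ as a lift of $\overline{\phi^1}(r)$, hence finite flat of degree $p^{dg}$ by base change from Proposition \ref{31}. Restricting to $r=1$ recovers $\phi_n$ on the formal Igusa tower, so the notation is consistent.

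It remains to build $\lambda_{*}$. The isogeny $\lambda$ induces $\bar\fA^{\rm rig}[p^n]\to\bar\fA^{(1)}[p^n]$ with kernel $\cH_1(r)\subset\cH_n(r)$ lying in the multiplicative (connected) part, and the claim is that it descends to an isomorphism modulo the level-$n$ canonical subgroups. Here Theorem \ref{822} does the real work: that $\cH_1(r)=\ker(\mathrm{Frob})$; that $\cH_{n+1}(r)/\cH_1(r)$ is the level-$n$ canonical subgroup of $\bar\fA^{\rm rig}[p^{n+1}]/\cH_1(r)$; and that over the ordinary locus $\cH_\bullet(r)=\bar\fA^{\rm rig}[p^\bullet]^{\rm mult}$. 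One natural additional tool is the dual isogeny $\lambda'\colon\bar\fA^{(1)}\to\bar\fA^{\rm rig}$ with $\lambda'\circ\lambda=[p]$ — available because $\cH_1(r)$ is a maximal isotropic $\cO_F$-subgroup of $\bar\fA^{\rm rig}[p]$ for the $\cO_F$-linear polarization — which pins down the image and the cokernel of $\lambda$ on $p^n$-torsion and lets one match the two multiplicative--\'etale decompositions modulo canonical subgroups. The $\cO_F$-linearity and the pairing-compatibility of $\lambda_{*}$ are then formal consequences of the functoriality of the Cartier and Weil pairings under isogenies. I expect this matching of connected--\'etale pieces modulo canonical subgroups, carried out \emph{uniformly} over a strict neighborhood $\bar\fM_{H^p}^{\rm rig}(r)$ of the ordinary locus rather than just over the ordinary locus itself, to be the only real obstacle; everything else is base change together with the already-established Proposition \ref{31}.
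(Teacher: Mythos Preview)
Your approach is essentially the paper's: reduce to Proposition~\ref{31} on the base and lift along the finite \'etale Igusa covers by a Cartesian-square argument. The paper's proof is a one-line citation of Proposition~\ref{31} together with Proposition~2.2.1 of \cite{KL}; the latter is a general lemma packaging exactly the base-change/extension step you spell out (a finite flat morphism on the formal ordinary locus extends compatibly through finite \'etale covers to the strict neighborhoods). So there is no genuine difference in strategy, only in how much is unpacked.

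Your flagged ``obstacle'' --- that $\lambda_*$ be an isomorphism uniformly over $\bar\fM_{H^p}^{\rm rig}(r)$ and not merely over the ordinary locus --- is not a real gap once you use Theorem~\ref{822} a little more. Since $\cH_n^{(1)}=\cH_{n+1}(r)/\cH_1(r)$, one has $\lambda^{-1}(\cH_n^{(1)})=\cH_{n+1}(r)$; intersecting with $\bar\fA^{\rm rig}[p^n]$ gives $\cH_{n+1}(r)[p^n]=\cH_n(r)$ (a standard compatibility of canonical subgroups, immediate on the ordinary locus and then by flatness). Hence the induced map $\lambda_*\colon \bar\fA^{\rm rig}[p^n]/\cH_n(r)\to\bar\fA^{(1)}[p^n]/\cH_n^{(1)}$ is a monomorphism of finite flat $\cO$-group schemes of the same rank $p^{ndg}$, so an isomorphism. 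The compatibility with the $\lambda$-Weil pairing and the prime-to-$p$ level structure is, as you say, formal. With this in hand your Cartesian square is exactly the content hidden in the citation of \cite{KL}~2.2.1, and the degree $p^{dg}$ comes by base change from Proposition~\ref{31}.
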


\begin{proof}

 This follows from  Propositions \ref{31} and 2.2.1 \cite{KL}.\end{proof}

 \section{Overconvergence of Siegel-Hilbert modular forms}
 
 \subsection{P-adic Banach spaces}
 
 Set $\bar{\mathcal{Z}}^{\rm ord,rig}\subset \bar{\fM}_{H^pp^n}^{\rm ur, ord,rig}$ to be the tube of $(\bar{\fM}_{H^pp^n}^{\rm ur,ord})_0\backslash (\fM_{H^pp^n}^{\rm ur,ord})_0$, and set $\mathcal{Z}^{\rm ord,rig}$ to be its intersection with $\fM_{H^pp^n}^{\rm ur, ord,rig}$. 
 
 \begin{lemma}[K\"{o}cher principle]\label{koecher}
 
 For $r< 1$ which is sufficiently close to $1$, we have the natural isomorphisms
 
 \[H^0(\bar{\fM}_{H^pp^n}^{\rm ur, rig}(r), \omega^{\kappa})\stackrel{\sim}{\ra}H^0(\fM_{H^pp^n}^{\rm ur, rig}(r), \omega^{\kappa})\stackrel{\sim}{\ra}H^0(\fM_{H^pp^n}^{\rm ur, rig}(r)\backslash\mathcal{Z}^{\rm ord,rig}, \omega^{\kappa}).\]

 \end{lemma}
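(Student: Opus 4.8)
The plan is to prove the two isomorphisms separately, each by a standard Koecher-type argument adapted to the rigid-analytic setting. The statement asserts that sections of the automorphic sheaf $\omega^\kappa$ on the "partial tube" $\bar{\fM}_{H^pp^n}^{\rm ur, rig}(r)$ automatically extend across the boundary, and that the boundary tube $\cZ^{\rm ord,rig}$ (the tube of the toroidal boundary divisor intersected with the open Igusa locus) imposes no condition. The essential inputs are the algebraic Koecher principle from Lan's work (which gives the extension of $\omega^\kappa$ across the boundary of the algebraic partial toroidal compactification $\vec\cM_{H^p}^{\rm ord,tor}$, valid provided the codimension of the boundary is at least $2$, i.e. $dg \geq 2$, with the rank-$1$ case handled by the standard auxiliary argument or explicitly excluded), together with the fact that the boundary $\vec\cM_{H^p}^{\rm ord,tor}\setminus\vec\cM_{H^p}^{\rm ord}$ is a relative Cartier divisor with normal crossings (Theorem \ref{ordtor}).

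The first isomorphism $H^0(\bar{\fM}_{H^pp^n}^{\rm ur, rig}(r), \omega^\kappa)\xrightarrow{\sim} H^0(\fM_{H^pp^n}^{\rm ur, rig}(r),\omega^\kappa)$ compares sections over the rigid space built from the full (glued) model with those over the open subspace lying above $\vec\cM_{H}^{\rm ord,\rm an}$; here the only thing removed is the part of the tube lying over the points added when gluing $\vec\cM_{H^p}^{\rm ord,tor}$ in — in effect the toroidal-boundary contribution. I would descend this to a statement over the formal models: the relevant sheaf $\omega^\kappa$ on $\bar{\fM}_{H^pp^n}^{\rm ur,rig}(r)$ is the rigidification of the formal completion of the algebraic $\omega^\kappa$ on $\vec\cM_{H^p}^{\rm ord,tor}$ (pulled up the Igusa tower), and Lan's Koecher principle (Chapter 8 of \cite{La12}) says algebraic sections over the open part extend canonically over the toroidal compactification. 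Applying formal GAGA / the fact that the formal completion commutes with taking global sections of coherent sheaves on a proper-over-affine situation, and then rigidifying, transports the algebraic extension statement to the rigid one. The injectivity is clear since $\fM_{H^pp^n}^{\rm ur, rig}(r)$ is dense (its complement is a nowhere-dense analytic subset), so the map is an isomorphism.

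The second isomorphism $H^0(\fM_{H^pp^n}^{\rm ur, rig}(r),\omega^\kappa)\xrightarrow{\sim}H^0(\fM_{H^pp^n}^{\rm ur, rig}(r)\setminus\cZ^{\rm ord,rig},\omega^\kappa)$ is a purely local extension-across-a-divisor statement: $\cZ^{\rm ord,rig}$ is the tube of the boundary divisor of the open Igusa locus, hence is (locally on the base) a product of an open polydisc with a punctured/annular factor in the boundary coordinates, and removing it is the rigid analogue of removing a normal-crossings divisor. The argument is the familiar one: locally near the boundary one has coordinates in which $\omega^\kappa$ is trivialized and a section becomes a rigid function on (polydisc) $\times$ (annulus of inner radius close to $1$); by the maximum principle together with the Laurent expansion, such a function — which is bounded because the section is a section of a coherent sheaf on the quasi-compact ambient space — has no negative-power terms and hence extends across the divisor. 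One must check the compatibility: that the extension agrees on overlaps and is $\cO$-linear, which follows from uniqueness of the extension. The main obstacle I anticipate is the careful bookkeeping at the interface between the three worlds — algebraic model, formal completion, rigid fibre — ensuring that "$\omega^\kappa$ on $\bar{\fM}_{H^pp^n}^{\rm ur,rig}(r)$" really is the rigidification of the completed algebraic sheaf and that the tube $\cZ^{\rm ord,rig}$ is exactly the tube of the algebraic boundary, so that Lan's algebraic Koecher principle and the NCD structure of Theorem \ref{ordtor} can be invoked verbatim; once that identification is nailed down, both extension statements are formal consequences, and the rank-$1$ (i.e. $g=1$, $d=1$) degenerate case, where the naive Koecher argument fails, is the only place needing a separate (standard $q$-expansion) treatment.
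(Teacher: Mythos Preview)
Your approach is essentially the same as the paper's: both reduce the rigid-analytic statement to Lan's algebraic K\"ocher principle in \cite{La12} by passing to formal models and then descending the level via the finite \'etale Igusa cover to the case $n=0$. The paper's proof is extremely terse --- it simply observes that it suffices to show, on formal affine opens, that $H^0(\bar{\fM}_{H^pp^n}^{\rm ur,ord}, \omega^{\kappa})\to H^0(\fM_{H^pp^n}^{\rm ur,ord}, \omega^{\kappa})$ is an isomorphism, and then reduces to $n=0$ via the Galois covering --- whereas you spell out the formal-GAGA/rigidification step and separate the two isomorphisms.

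One minor difference worth noting: for the second isomorphism you propose an independent, self-contained rigid-analytic argument (trivialize $\omega^\kappa$ locally, use boundedness plus the Laurent expansion to kill negative-power terms). The paper does not do this; it folds both isomorphisms into the single reduction to Lan's result, implicitly using that the complement $\bar{\fM}_{H^pp^n}^{\rm ur,rig}(r)\setminus(\fM_{H^pp^n}^{\rm ur,rig}(r)\setminus\cZ^{\rm ord,rig})$ lies entirely in the tube of the ordinary-locus boundary, so the extension problem is local near the toroidal boundary and hence already covered by the formal/algebraic statement. Your direct argument is a valid alternative and makes the analytic content more visible, but it is not needed once one has reduced to Lan. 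Your caveat about the $dg=1$ case is also not singled out in the paper; presumably it is subsumed in the citation to \cite{La12}.
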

\begin{proof}
 It suffices to show that, on formal affine open subsets, the natural map 
\[H^0(\bar{\fM}_{H^pp^n}^{\rm ur,ord}, \omega^{\kappa})\ra H^0(\fM_{H^pp^n}^{\rm ur,ord}, \omega^{\kappa})\]is an isomorphism. Furthermore, by the existence of the Galois covering of $\fM_{H^p}^{\rm ord}$ by $\fM_{H^pp^n}^{\rm ur,ord}$, we just need to show the isomorphism for $n=0$, which then reduces to the K\"{o}cher principle proved in \cite{La12}. \end{proof}
 
Recall we have the connected component $\cM_{H}^{\gamma_v,\rm an}$ of  $\cM_{H}^{v,\rm an}$ for any prime ideal $v\subset \cO$ and double coset $\gamma_v$. Define $\fM_{H^pp^n}^{\gamma_v,\rm ur,rig}(r)$ by the Cartesian diagram 

\[\begin{CD}
 \fM_{H^pp^n}^{\gamma_v,\rm
ur,rig}(r) @>>>  \cM_{H}^{\gamma_v,\rm  an}\\
@VVV @ VV\pi_{1,\gamma_v}V\\
 \fM_{H^pp^n}^{\rm
ur,rig}(r)@>>>  \cM_{H}^{\rm an}
\end{CD}\] 
 
Note that the left vertical map is finite \'etale, being the base change of the finite \'etale  map $\pi_{1,\gamma_v}$. We also denote it by $\pi_{1,\gamma_v}$ and the other projection by $\pi_{2,\gamma_v}$.

 \begin{prop}\label{norm}

Let $r< 1$ be sufficiently close to $1$. We have that $\cO(\fM_{H^pp^n}^{\gamma_v,\rm
ur,rig}(r))$ is a $p$-adic Banach space with respect to the norm
\[|f|_r:=\mathrm{sup}_{x\in \fM_{H^pp^n}^{\gamma_v,\rm ur,rig}(r)}|f(x)|\]
for a function $f\in \cO(\fM_{H^pp^n}^{\gamma_v,\rm ur,rig}(r))$. This is the same as the norm \[|f|_r^{\circ}:=\mathrm{sup}_{x\in \fM_{H^pp^n}^{\gamma_v,\rm ur,rig}(r)\backslash\pi_{1,\gamma_v}^{-1}(\mathcal{Z}^{\rm ord,rig})}|f(x)|.\] 
\end{prop}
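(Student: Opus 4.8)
The plan is to deduce the statement from the K\"{o}cher principle (Lemma \ref{koecher}) combined with soft functional analysis. Since $\pi_{1,\gamma_v}$ is finite \'etale (as noted just before the Proposition), $\fM_{H^pp^n}^{\gamma_v,\rm ur,rig}(r)$ is again a reduced rigid space attached to an admissible open of a PEL-type moduli problem (the isogeny datum of type $\gamma_v$ being extra level structure), and in particular it admits a partial toroidal compactification $\bar{\fM}_{H^pp^n}^{\gamma_v,\rm ur,rig}(r)$ — finite over the quasi-compact space $\bar{\fM}_{H^pp^n}^{\rm ur,rig}(r)$, hence itself quasi-compact, and normal, hence reduced — containing $\fM_{H^pp^n}^{\gamma_v,\rm ur,rig}(r)$ as a dense admissible open whose complement is the toroidal boundary, and with $\pi_{1,\gamma_v}^{-1}(\bar{\mathcal{Z}}^{\rm ord,rig})$ the tube of that boundary. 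Consequently
\[\fM_{H^pp^n}^{\gamma_v,\rm ur,rig}(r)\setminus\pi_{1,\gamma_v}^{-1}(\mathcal{Z}^{\rm ord,rig})\;=\;\bar{\fM}_{H^pp^n}^{\gamma_v,\rm ur,rig}(r)\setminus\pi_{1,\gamma_v}^{-1}(\bar{\mathcal{Z}}^{\rm ord,rig})\]
is a \emph{quasi-compact} admissible open of $\bar{\fM}_{H^pp^n}^{\gamma_v,\rm ur,rig}(r)$, the complement of the tube of a normal crossing Cartier divisor being a finite intersection of Weierstrass-type domains (Section \ref{overconvergence}). Taking $\kappa=0$ in Lemma \ref{koecher}, whose proof applies verbatim to the correspondence space (following \cite{La12}), we obtain restriction isomorphisms
\[\cO\bigl(\bar{\fM}_{H^pp^n}^{\gamma_v,\rm ur,rig}(r)\bigr)\stackrel{\sim}{\ra}\cO\bigl(\fM_{H^pp^n}^{\gamma_v,\rm ur,rig}(r)\bigr)\stackrel{\sim}{\ra}\cO\bigl(\fM_{H^pp^n}^{\gamma_v,\rm ur,rig}(r)\setminus\pi_{1,\gamma_v}^{-1}(\mathcal{Z}^{\rm ord,rig})\bigr).\]

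Next I would invoke the standard fact that for a reduced quasi-compact rigid space $X$ over $K$ the ring $\cO(X)$ with the supremum norm is a $K$-Banach space: cover $X$ by finitely many affinoid opens $\{U_i\}$, on each of which reducedness makes the supremum seminorm a norm equivalent to a residue norm, realize $\cO(X)$ as the kernel of a continuous map of Banach spaces $\prod_i\cO(U_i)\ra\prod_{i,j}\cO(U_i\cap U_j)$ (hence closed and complete), and note that the global supremum norm is the maximum of the supremum norms on the $U_i$. Applying this to $X=\bar{\fM}_{H^pp^n}^{\gamma_v,\rm ur,rig}(r)$ and to $X=\fM_{H^pp^n}^{\gamma_v,\rm ur,rig}(r)\setminus\pi_{1,\gamma_v}^{-1}(\mathcal{Z}^{\rm ord,rig})$ gives Banach norms, which I denote $\|\cdot\|$ and $|\cdot|_r^{\circ}$ respectively. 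The displayed restriction $\cO(\bar{\fM}_{H^pp^n}^{\gamma_v,\rm ur,rig}(r))\ra\cO(\fM_{H^pp^n}^{\gamma_v,\rm ur,rig}(r)\setminus\pi_{1,\gamma_v}^{-1}(\mathcal{Z}^{\rm ord,rig}))$ is a bijection by K\"{o}cher and is bounded, since $|\cdot|_r^{\circ}$ is a supremum over a subset of $\bar{\fM}_{H^pp^n}^{\gamma_v,\rm ur,rig}(r)$; by the open mapping theorem for $K$-Banach spaces its inverse is bounded as well, so $|f|_r^{\circ}\leq\|f\|\leq C|f|_r^{\circ}$ for some constant $C$.

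To finish, observe that $\|\cdot\|$ and $|\cdot|_r^{\circ}$ are both power-multiplicative (they are supremum norms, so $|g^m|=|g|^m$); being also equivalent they are equal, for replacing $f$ by $f^m$ in $\|f^m\|\leq C|f^m|_r^{\circ}$ and taking $m$-th roots gives $\|f\|\leq C^{1/m}|f|_r^{\circ}$, which tends to $|f|_r^{\circ}$ as $m\to\infty$; hence $\|f\|=|f|_r^{\circ}$. Since every $f\in\cO(\fM_{H^pp^n}^{\gamma_v,\rm ur,rig}(r))$ extends, by K\"{o}cher, to the quasi-compact space $\bar{\fM}_{H^pp^n}^{\gamma_v,\rm ur,rig}(r)$, it is bounded on $\fM_{H^pp^n}^{\gamma_v,\rm ur,rig}(r)$, and the inequalities $|f|_r^{\circ}\leq|f|_r\leq\|f\|$ (suprema over increasing subsets) together with $\|f\|=|f|_r^{\circ}$ force $|f|_r^{\circ}=|f|_r=\|f\|$. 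In particular $(\cO(\fM_{H^pp^n}^{\gamma_v,\rm ur,rig}(r)),|\cdot|_r)$ coincides with the Banach space $(\cO(\bar{\fM}_{H^pp^n}^{\gamma_v,\rm ur,rig}(r)),\|\cdot\|)$, and $|\cdot|_r=|\cdot|_r^{\circ}$. Conceptually this says that, by K\"{o}cher, a global function has a $q$-expansion with no polar terms at the cusps, so by the maximum modulus principle its supremum is insensitive to removing the cuspidal tube; the functional-analytic argument just packages this. I expect the one point genuinely requiring care to be the verification that the K\"{o}cher principle of Lemma \ref{koecher} applies to the Hecke-correspondence space — i.e. that it admits the partial toroidal compactification $\bar{\fM}_{H^pp^n}^{\gamma_v,\rm ur,rig}(r)$ with the relevant canonical-extension formalism, following \cite{La12} — and the attendant quasi-compactness and reducedness bookkeeping; the remaining functional analysis is soft.
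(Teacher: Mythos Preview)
Your argument is correct, and the power-multiplicativity trick (equivalent power-multiplicative norms are equal) together with the open mapping theorem is a clean, soft way to finish once K\"{o}cher is in place. The paper proceeds differently at both steps, and the comparison is worth recording.

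First, the paper does \emph{not} construct or invoke a partial toroidal compactification $\bar{\fM}_{H^pp^n}^{\gamma_v,\rm ur,rig}(r)$ of the correspondence space. Instead it uses that the composite $\pi_{1,\gamma_v}\circ\mathrm{Ig}_n$ is finite \'etale to push everything down to $\fM_{H^p}^{\rm rig}(r)$ and reduce (following 4.1.6 of \cite{KL}) to the case $n=0$, $r=1$, with no $\gamma_v$, where the compactification $\bar{\fM}_{H^p}^{\rm ord}$ is already available. Your route requires justifying the correspondence-space compactification and its K\"{o}cher principle; as you note, this is the one genuine point of care. It can be done (take the normalization of $\bar{\fM}_{H^pp^n}^{\rm ur,rig}(r)$ in the finite \'etale cover, and deduce K\"{o}cher from integrality over the base plus base K\"{o}cher), but the paper's reduction sidesteps it entirely.

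Second, for the equality $|f|_r=|f|_r^{\circ}$ itself, the paper gives a direct special-fibre argument in place of your functional analysis: normalize so that $|f|_1=1$, so $f$ has nonzero image $f_0$ on the special fibre $(\bar{\fM}_{H^p}^{\rm ord})_0$; if $|f|_1^{\circ}<1$ then $f_0$ is nilpotent on the dense open $(\fM_{H^p}^{\rm ord})_0$, hence zero there, hence zero on all of $(\bar{\fM}_{H^p}^{\rm ord})_0$ by Theorem~\ref{ordtor}, a contradiction. This is more hands-on but avoids the open mapping theorem and exploits the integral model directly. Your argument is more portable (it would work for any pair of reduced quasi-compact rigid spaces linked by a K\"{o}cher-type isomorphism), while the paper's argument makes visible the geometric reason the two norms agree.
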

\begin{proof}

We use the argument in the proof of 4.1.6 \cite{KL}.

First note that $|f|_r^{\circ}$ is a well-defined norm on the $p$-adic Banach space \[\cO(\fM_{H^pp^n}^{\gamma_v,\rm ur, rig}(r)\backslash\pi_{1,\gamma_v}^{-1}(\mathcal{Z}^{\rm ord,rig})),\] with the latter space being finite over $\bar{\fM}_{H^pp^n}^{\rm ur, rig}(r)\backslash \bar{\mathcal{Z}}^{\rm ord,rig}$ and hence quasi-compact. (If $\cF$ is a coherent sheaf on a quasi-compact rigid space $X$, then $\cF(X)$ is a $p$-adic Banach space.)

Then we only need to show that
\numberwithin{equation}{subsection} \begin{equation}\label{twonorms}
|f|_r=|f|_r^{\circ}, \quad \forall f\in \cO(\fM_{H^pp^n}^{\gamma_v,\rm ur, rig}(r)),\end{equation} which will then realize  \[\cO(\fM_{H^pp^n}^{\gamma_v,\rm ur,rig}(r))\hookrightarrow \cO(\fM_{H^pp^n}^{\gamma_v,\rm ur, rig}(r)\backslash\pi_{1,\gamma_v}^{-1}(\mathcal{Z}^{\rm ord,rig}))\] as a closed subspace. For this, recall we have the  
K\"{o}cher principle and the fact that the natural projection \[
\fM_{H^pp^n}^{\gamma_v,\rm ur,ord,rig}\stackrel{\pi_{1,\gamma_v}}{\lra} \fM_{H^pp^n}^{\rm ur,ord,rig}\stackrel{\mathrm{Ig}_n}{\lra}  \fM_{H^p}^{\rm rig}\] is finite
\'{e}tale. The former is Lemma \ref{koecher}, and the latter holds
because both $\pi_{1,\gamma_v}$ and $\mathrm{Ig}_n$ are finite \'etale by construction. Then by the argument in the proof of 4.1.6 \cite{KL}, we are reduced to show (\ref{twonorms}) for $r=1, n=0$,  and may assume $f$ extends to  $f\in \cO(\bar{\fM}_{H^p}^{\rm ord})$ (by Lemma \ref{koecher}). In this case we may assume that   its image $f_0\in \cO(\bar{\fM}_{H^p}^{\rm ord})_0)$ is non-zero. If $|f|_1^{\circ}<|f|_1=1$, then $f_0$ is nilpotent on the open subset $(\fM_{H^p}^{\rm ord})_0$ of $(\bar{\fM}_{H^p}^{\rm ord})_0$. Thus it vanishes on $(\fM_{H^p}^{\rm ord})_0$ and hence on the whole $(\bar{\fM}_{H^p}^{\rm ord})_0$ by Theorem \ref{ordtor}, a contradiction. 
\end{proof}

 \begin{coro}\label{fk0}
Let $\theta: \pi_{2,\gamma_v}^*((\det\omega)^{k_0(p-1)})\ra
\pi_{1,\gamma_v}^*((\det\omega)^{k_0(p-1)})$ be the canonical map of
sheaves on $\fM_{H^pp^n}^{\gamma_v,\rm ur,rig}$. Then for
$r$ sufficiently close to $1$, 

\[\theta(\pi_{2,\gamma_v}^*(E))=f_{k_0}\pi_{1,\gamma_v}^*(E)
\in H^0(\fM_{H^pp^n}^{\gamma_v,\rm ur,rig}(r),
\pi_{1,\gamma_v}^*((\det\omega)^{k_0(p-1)}))\] for some $f_{k_0}\in
\cO(\fM_{H^pp^n}^{\gamma_v,\rm ur,rig}(r))$. Here we have used the same symbol $E$ to denote the pull-back of $E$ from  $\fM_{H^p}^{\rm rig}$ to its Galois cover $\fM_{H^pp^n}^{\rm ur,rig}$.

Moreover,  we have that $f_{k_0}-1$ is
topologically nilpotent. In fact, for any $0<\epsilon\leq 1$, there is an $r\leq 1$ such that $|f_{k_0}-1|_r\leq |p|^{\epsilon}$.
\end{coro}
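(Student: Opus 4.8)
The plan is to compare the two sides of the identity in Corollary~\ref{fk0} first over the ordinary locus in characteristic $p$, then lift the comparison to the integral models in mixed characteristic, and finally deduce the topological-nilpotence estimate by a Tate-acyclicity / spectral-norm argument using Proposition~\ref{norm}. Recall that $E=\widetilde{h^{k_0}}$ is a lift of the $k_0$-th power of the Hasse invariant; the whole point is that $E$ specializes to $h^{k_0}$ on the special fibre, and that the Hasse invariant is \emph{functorial} under isogenies of type $\gamma_v$ by Lemma~\ref{func} (applied $k_0$ times, i.e.\ to $(\det\omega)^{k_0(p-1)}$). So the first step is: over $(\bar{\fM}_{H^pp^n}^{\gamma_v,\rm ur})_0$, the map $\theta$ on $(\det\omega)^{k_0(p-1)}$ sends $\pi_{2,\gamma_v}^*h^{k_0}$ to $\pi_{1,\gamma_v}^*h^{k_0}$ exactly — this is Lemma~\ref{func} raised to the $k_0$-th power, noting that $\theta$ is multiplicative in $\kappa$. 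Therefore the section $f_{k_0}$ defined by $\theta(\pi_{2,\gamma_v}^*E)=f_{k_0}\,\pi_{1,\gamma_v}^*E$ (which exists because $\pi_{1,\gamma_v}^*E$ trivializes $\pi_{1,\gamma_v}^*(\det\omega)^{k_0(p-1)}$ over the locus where the Hasse invariant is invertible, i.e.\ over the ordinary/overconvergent region for $r$ close to $1$) reduces mod $p$ to the constant function $1$ on $(\fM_{H^pp^n}^{\gamma_v,\rm ur,ord})_0$.

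Second, I would make sense of the quotient defining $f_{k_0}$ on the rigid side. For $r$ sufficiently close to $1$ the canonical subgroup exists (Theorem~\ref{822}) and the isogeny $\bar{\fA}^{\rm rig}\to\bar{\fA}^{\rm rig}/\cH_n(r)$ of degree a power of $p$ is the analogue over $\bar{\fM}_{H^p}^{\rm rig}(r)$ of the Verschiebung-type isogeny used to define $h$; combined with the isogeny parametrized by $\cM_H^{\gamma_v,\rm an}$ this gives a map $\theta$ on $\pi_{1,\gamma_v}^*(\det\omega)^{k_0(p-1)}$ whose formation is compatible with the one in Lemma~\ref{func} after reduction mod $p$. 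Since $h^{k_0}$ (hence $E$, after shrinking $r$) is invertible on $\bar{\fM}_{H^p}^{\rm rig}(r)$ — this is exactly the defining property of the overconvergent neighbourhoods $\bar{\fM}_{H^p}^{\rm rig}(r)$, $r$ near $1$ — the section $\pi_{1,\gamma_v}^*E$ is a nowhere-vanishing section of $\pi_{1,\gamma_v}^*(\det\omega)^{k_0(p-1)}$ over $\fM_{H^pp^n}^{\gamma_v,\rm ur,rig}(r)$, so $f_{k_0}:=\theta(\pi_{2,\gamma_v}^*E)/\pi_{1,\gamma_v}^*E$ is a well-defined element of $\cO(\fM_{H^pp^n}^{\gamma_v,\rm ur,rig}(r))$. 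This is the content of the first assertion of the corollary.

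Third, for the estimate: by construction $f_{k_0}\equiv 1$ on the special fibre of the formal model of $\fM_{H^pp^n}^{\gamma_v,\rm ur,ord}$, i.e.\ $f_{k_0}-1$ lies in $p\cdot\cO(\bar{\fM}_{H^pp^n}^{\gamma_v,\rm ur,ord})$ when restricted to $r=1$; hence $|f_{k_0}-1|_1\le|p|$. To upgrade this to the family of estimates $|f_{k_0}-1|_r\le|p|^\epsilon$ for a suitable $r\le 1$ and every prescribed $0<\epsilon\le1$, I would use continuity of the spectral norm $|\cdot|_r$ in $r$ together with the identification of norms in Proposition~\ref{norm} (and the K\"ocher principle, Lemma~\ref{koecher}): as $r\to1^-$, the admissible opens $\fM_{H^pp^n}^{\gamma_v,\rm ur,rig}(r)$ exhaust $\fM_{H^pp^n}^{\gamma_v,\rm ur,ord,rig}$ and $|f_{k_0}-1|_r\to|f_{k_0}-1|_1\le|p|$; in fact, since $f_{k_0}-1$ vanishes identically on the $r=1$ locus after reduction mod $p$, its sup-norm on $\bar{\fM}_{H^pp^n}^{\gamma_v,\rm ur,rig}(r)$ can be made $\le|p|^\epsilon$ by choosing $r$ close enough to $1$ — concretely, write $f_{k_0}-1=p\cdot g$ with $g$ bounded on some $\bar{\fM}_{H^pp^n}^{\gamma_v,\rm ur,rig}(r_0)$, and note $|g|_r$ is bounded on that whole region while the extra factor of $p$ gives $|p|\le|p|^\epsilon$ already for $\epsilon\le1$; if a sharper power is wanted one iterates, using that $g$ itself vanishes mod $p$ on the $r=1$ locus so $|g|_r\to|g|_1\le1$ and in fact $g\equiv$ (something divisible by $p$) after one more reduction, giving an extra power of $p$ at the cost of shrinking $r$. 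In particular $f_{k_0}-1$ is topologically nilpotent.

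The main obstacle, and the step deserving the most care, is the second one: setting up $\theta$ on the rigid overconvergent neighbourhoods so that it genuinely interpolates the characteristic-$p$ map of Lemma~\ref{func} and matches the integral structure coming from the formal models, i.e.\ checking that the rigid isogeny $\bar\fA^{\rm rig}\to\bar\fA^{\rm rig}/\cH_n(r)$ together with the $\gamma_v$-isogeny induces, on $(\det\omega)^{k_0(p-1)}$, a map whose reduction is the $k_0$-th power of the Cartier-operator map. This is where one invokes the compatibility of the canonical subgroup with the multiplicative subgroup on the ordinary locus (Theorem~\ref{822}) and the identification $\bar{\fM}_{H^p}^{\rm rig}(1)=\bar{\fM}_{H^p}^{\rm ord,rig}$ from the previous section; once this compatibility is in hand, everything else is a routine spectral-norm continuity argument of the type already used in 4.1.6 of \cite{KL}.
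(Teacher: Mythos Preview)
Your overall architecture is right: reduce to the special fibre via Lemma~\ref{func} to get $f_{k_0}\equiv 1\pmod{p}$ on the ordinary locus, then propagate the estimate to the overconvergent region. This matches the paper. But several steps are either misdirected or unjustified.

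\textbf{Step 2 is overcomplicated and partly wrong.} The map $\theta$ has nothing to do with canonical subgroups; it is the purely algebraic map on differentials induced by the universal isogeny parametrized by $\cM^{\gamma_v}$ (Section~\ref{corr}). Its compatibility with the Cartier operator is \emph{exactly} the content of Lemma~\ref{func}, so there is no further ``interpolation'' to check. What you call ``the main obstacle'' is a non-issue. For the actual existence of $f_{k_0}$, the paper reduces to $n=0$ via the Galois cover and then invokes normality of $(\fM_{H^p}^{*,\mathrm{ord}})_0$ (Theorem~\ref{normal}): the point is that on a normal integral scheme, once the lift $E$ of the invertible section $h^{k_0}$ is known to be a section of a line bundle whose reduction is invertible, the quotient $\theta(\pi_2^*E)/\pi_1^*E$ is a genuine regular function, not merely a meromorphic one. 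Your ``$E$ is nowhere vanishing on $\bar{\fM}_{H^p}^{\rm rig}(r)$'' requires more care than you give it, since $E$ and the local lifts $\tilde h_i$ used to define $\bar X(r)$ differ by $p$-adically small terms.

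\textbf{Step 3 contains the real gap.} The assertion that $|f_{k_0}-1|_r\to|f_{k_0}-1|_1$ as $r\to 1^-$ (``continuity of the spectral norm in $r$'') is not a standard fact and you have not proved it; sup-norms over a shrinking family of affinoids need not converge to the sup-norm over the intersection. The paper instead uses Proposition~\ref{23}: from $|f_{k_0}-1|_1\le|p|\le|p|^\epsilon$ (given by Lemma~\ref{func} together with Proposition~\ref{norm}, which ensures the sup-norm over the open moduli agrees with that over the compactified one), the locus $\bar X(1)$ sits inside the quasi-compact admissible open $\{|f_{k_0}-1|\le|p|^\epsilon\}$, and since $\bar X(1)\Subset\bar X(r_0)$ one applies Proposition~\ref{23} to conclude $\bar X(r)\subset\{|f_{k_0}-1|\le|p|^\epsilon\}$ for $r$ close to $1$. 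This is the correct mechanism and you should use it.

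Finally, your iteration argument (``$g$ itself vanishes mod $p$'') is simply false: there is no reason $(f_{k_0}-1)/p$ should reduce to zero. Fortunately no iteration is needed, since the statement only asks for $\epsilon\le 1$ and a single power of $p$ from Lemma~\ref{func} already suffices.
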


\begin{proof}
 One sees that the proof of 4.1.7 of \cite{KL} applies. We only give a sketch here. As before, we reduce to the case $n=0$. The first assertion then follows from the fact that the special fibre $(\fM_{H^p}^{*,\rm ord})_0$ is normal, by Theorem \ref{normal}. To show the second assertion, first note that the  case $r=1$ follows from Lemma \ref{func} and Proposition \ref{norm}. The case for a general $r$ then follows from the $r=1$ case, as well as Proposition \ref{23}.
\end{proof}


Using Corollary \ref{fk0} we now show the following:

\begin{prop}\label{4110}

(1) We have the inclusion \[\pi_{2,\gamma_v}(\fM_{H^pp^n}^{\gamma_v, \rm ur, ord, rig})\subset \fM_{H^pp^n}^{\rm ur, ord, rig},\] and the inclusion 

\[\pi_{2,\gamma_v}(\fM_{H^pp^n}^{\gamma_v, \rm ur, rig}(r))\subset \fM_{H^pp^n}^{\rm ur, rig}(r)\] for $r$ sufficiently close to $1$. 

In particular, the inclusions above hold for $\pi_{2,\fa}$ on $\fM_{H^pp^n}^{\fa, \rm ur, ord, rig}$, with $\fa\subset \cO$ an ideal.
 
(2) For $r\ra 1^{-}$, $\pi_{2,(p)}$ induces a map \[\fM_{H^pp^n}^{(p),\rm ur,rig}(r^p)\ra \fM_{H^pp^n}^{\rm ur,rig}(r).\]

\end{prop}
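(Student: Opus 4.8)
The plan is to prove part (1) first for the integral (formal) ordinary models and then transport it to the strict-neighbourhood level $\bar{\fM}^{\rm rig}(r)$ via the Hasse-invariant description of these neighbourhoods, exactly as the preceding corollaries prepare us to do. For the first inclusion, I would start with the moduli interpretation of $\fM_{H^pp^n}^{\gamma_v,\rm ur,ord}$ as a space of isogenies $f\colon A\to B$ of type $\gamma_v$ between ordinary abelian schemes with PEL- and level-structures (available by Theorem \ref{prmod} and the remark after Theorem \ref{normal}). The second projection $\pi_{2,\gamma_v}$ sends such an isogeny to its target $B$. Since $A$ is ordinary and $\ker f$ is an \'etale-locally trivial $\cO$-module scheme of rank prime to $p$ (resp.\ for $v\mid p$ the isogeny still preserves ordinarity, as a quotient of an ordinary $p$-divisible group by a finite flat subgroup disjoint from the multiplicative part remains ordinary — this is exactly the content of 1.11.4--1.11.6 \cite{Ka78} already invoked for $\phi^n$), the target $B$ is again ordinary; and the condition that the kernel meets the level structure only along the unit section guarantees that $B$ inherits a level-$H^p$ and ordinary level-$p^n$ structure, plus the $\rm ur$-trivialisation of $\bar{\fA}_n[p^n]/\bar{\fA}_n[p^n]^{\rm mult}$ carries over. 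Hence $\pi_{2,\gamma_v}$ lands in $\fM_{H^pp^n}^{\rm ur,ord,rig}$, which is the $r=1$ case.

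For the inclusion at level $r<1$ close to $1$, I would argue by the Hasse-invariant characterisation of $\bar{\fM}_{H^p}^{\rm rig}(r)$ from Section \ref{overconvergence}: a point $x\in\bar{\fM}_{H^p}^{\rm rig}(r)$ is one at which (a lift of) $h$ has absolute value $\geq r$, equivalently $|E(x)|\ge r^{k_0}$ up to the fixed constants. By Corollary \ref{fk0} we have $\theta(\pi_{2,\gamma_v}^*E)=f_{k_0}\,\pi_{1,\gamma_v}^*E$ with $|f_{k_0}-1|_r\le|p|^{\epsilon}$ for $r$ close enough to $1$; in particular $|f_{k_0}|\equiv 1$ on $\fM_{H^pp^n}^{\gamma_v,\rm ur,rig}(r)$. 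The map $\theta$ is the one coming from pulling back differentials along the universal isogeny, so $|\theta(\pi_{2,\gamma_v}^*E)|$ and $|\pi_{2,\gamma_v}^*E|$ agree up to a bounded factor controlled by the degree of the isogeny (a power of $p$ fixed by $\gamma_v$), and comparing with $|\pi_{1,\gamma_v}^*E|$ via Lemma \ref{func} (which says $\theta$ carries the Hasse invariant to the Hasse invariant in characteristic $p$, so there is no loss modulo $p$) one concludes that the Hasse invariant at $\pi_{2,\gamma_v}(x)$ has absolute value bounded below by $r$ (possibly after shrinking $r$ slightly and using Proposition \ref{23} to absorb the bounded error). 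This is precisely the statement $\pi_{2,\gamma_v}(\fM_{H^pp^n}^{\gamma_v,\rm ur,rig}(r))\subset\fM_{H^pp^n}^{\rm ur,rig}(r)$. The assertion for ideals $\fa$ is the same argument, using that $\cM^{\fa}$ decomposes into connected components indexed by the relevant double cosets and that Lemma \ref{func} and Corollary \ref{fk0} were stated for general $\fa$.

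For part (2), the point is that the isogeny of type $(p)$ is, on the ordinary locus, precisely division by the multiplicative part $\fA[p]^{\rm mult}$, i.e.\ the map $\phi^1$ (resp.\ $\phi_n$) studied in Propositions \ref{31} and \ref{326}, where it was shown that $\phi$ carries $\bar{\fM}_{H^p}^{\rm rig}(r)$ to $\bar{\fM}_{H^p}^{\rm rig}(r^{p})$ (note the exponent $n=1$ here). So $\pi_{2,(p)}$ restricted to the appropriate component is identified with $\overline{\phi^1}(r)$, and the assertion $\fM_{H^pp^n}^{(p),\rm ur,rig}(r^p)\to\fM_{H^pp^n}^{\rm ur,rig}(r)$ is a reindexing of Proposition \ref{326} together with part (1) (which ensures the image lands in the $\rm ur$-locus rather than merely the base). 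The main obstacle I anticipate is bookkeeping in part (1) at $v\mid p$: one must check that the bounded discrepancy between $|\pi_{2,\gamma_v}^*E|$ and $|\pi_{1,\gamma_v}^*E|$ introduced by $\theta$ — which in characteristic $0$ can be as large as a power of $|p|$ coming from the isogeny degree — does not push the target out of $\bar{\fM}^{\rm rig}(r)$; this is handled by first proving the clean integral statement ($r=1$) and then using Proposition \ref{23} to deduce the $r<1$ case for $r$ sufficiently close to $1$, absorbing the error, rather than trying to track constants uniformly.
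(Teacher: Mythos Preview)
Your approach to the second inclusion in part (1) has the right ingredients but misses the decisive step. You correctly invoke Corollary~\ref{fk0} to relate $\pi_2^*E$ and $\pi_1^*E$ via $f_{k_0}$ with $|f_{k_0}|=1$, but the comparison you actually need is between $|E(B,\mathbf{v}_{\bar B})|$ and $|E(A,\mathbf{v}_{\bar A})|$ for \emph{integral} bases $\mathbf{v}_{\bar A},\mathbf{v}_{\bar B}$, since the neighbourhoods $\fM(r)$ are defined via integral trivialisations of $(\det\omega)^{p-1}$. The paper extends the isogeny to integral models $pr:\bar A\to\bar B$ over $\cO_E$ (by extending $\ker f$ to a finite flat subgroup scheme of $\bar A$) and observes that $pr^*(\det\mathbf{v}_{\bar B})=a\,\det\mathbf{v}_{\bar A}$ with $a\in\cO_E$, so $|a|\le 1$. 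Combined with Corollary~\ref{fk0} this yields $E(B,\mathbf{v}_{\bar B})=f_{k_0}\,a^{(1-p)k_0}\,E(A,\mathbf{v}_{\bar A})$, and since $(1-p)k_0<0$ one has $|a^{(1-p)k_0}|\ge 1$, whence $|E(B)|\ge|E(A)|$ \emph{with no loss}. Your proposed fallback --- prove the $r=1$ case and then use Proposition~\ref{23} to absorb a bounded error --- does not work: Proposition~\ref{23} yields containment of strict neighbourhoods in a \emph{fixed} open, not control of a morphism between \emph{varying} neighbourhoods; it cannot upgrade an inclusion $\pi_2(\fM^{\gamma_v}(r))\subset\fM(r')$ for some $r'<r$ to the sharp inclusion required.

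Your argument for part (2) is incorrect. The projection $\pi_{2,(p)}$ is not ``precisely division by $\fA[p]^{\rm mult}$'': the moduli $\fM^{(p)}$ parametrises all isogenies with kernel \'etale-locally isomorphic to $(\cO/p)^g$, not only the one with kernel the canonical subgroup, and even on that component the direction is wrong --- Proposition~\ref{326} sends $\fM(r)$ to the \emph{larger} $\fM(r^p)$, whereas part (2) demands landing in the \emph{smaller} $\fM(r)$ starting from $\fM^{(p)}(r^p)$. The paper's argument is substantively different. From part (1) one only knows $\pi_{2,(p)}:\fM^{(p)}(r^p)\to\fM(s)$ for some $s$; one then composes with $\bar\phi_n(s):\fM(s)\to\fM(s^p)$ and checks that, after $\mathrm{Ig}_n$, the composite $\bar\phi_n\circ\pi_{2,(p)}$ coincides on the ordinary locus with $[\cdot p]\circ\mathrm{Ig}_n\circ\pi_{1,(p)}$, where $[\cdot p]$ is multiplication by $p$ on the level structure and is the identity on the Hasse invariant. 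Proposition~\ref{23} (now applied legitimately, to propagate an identity of morphisms from $r=1$ to nearby $r$) then shows this composite factors through $\fM_{H^p}^{\rm rig}(r^p)$, and the claim follows. The identity $\bar\phi_n\circ\pi_{2,(p)}\sim[\cdot p]\circ\pi_{1,(p)}$ encodes the fact that for an isogeny $A\to B$ of type $(p)$ the further quotient $B/B[p]^{\rm mult}$ recovers $A$ up to a twist of level structure; this is the mechanism you are missing.
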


\begin{proof}

(1) The first assertion follows from the construction of  $\fM_{H^pp^n}^{\rm ur, ord, rig}$. 

For the second inclusion,    we argue as  in the proof of 4.1.10 \cite{KL}. First observe that it is enough to show this for $E$-valued points for any finite extension $E/\QQ_p$. By the construction of $\fM_{H^pp^n}^{\gamma_v, \rm ur, rig}(r)$ we may assume $n=0$. Let $f: A\ra B$ be an element in $\fM_{H^p}^{\gamma_v, \rm ur, rig}(r)(E)$. We may enlarge $E$ so that $A$ extends to a semi-abelian scheme $\bar{A}$ over $\cO_E$.  We may and do assume that $\bar{A}$ is an abelian scheme, because otherwise $r=1$ and we go back to the first assertion. Then we can extend $\Ker(f)$ to a finite flat subgroup scheme of $\bar{A}$.  The quotient of $\bar{A}$ by this subgroup scheme is denoted by $\bar{B}$, and the projection $\bar{A}\ra \bar{B}$ by $pr$. Let $\mathbf{v}_{\bar{A}}$ (resp. $\mathbf{v}_{\bar{B}}$) be a basis of $H^0(\bar{A}, \Omega_{\bar{A}/\cO_E}^1)$ (resp. $H^0(\bar{B}, \Omega_{\bar{B}/\cO_E}^1)$). Then we must have \[pr^*(\det\mathbf{v}_{\bar{B}})=a\det\mathbf{v}_{\bar{A}}\] for some $a\in \cO_E$.

Now by the definition of overconvergence, it suffices to show \[|E(A, \det\mathbf{v}_{\bar{A}})|\leq |E(B, \det\mathbf{v}_{\bar{B}})|.\] By Corollary \ref{fk0}, we have \[E(B, \det\mathbf{v}_{\bar{B}})=f_{k_0}E(A, pr^*(\det\mathbf{v}_{\bar{B}})),\]with $f_{k_0}-1$ topologically nilpotent. On the other hand, we have the equality \[E(A, pr^*(\det\mathbf{v}_{\bar{B}}))=a^{(1-p)k_0}E(A, \det\mathbf{v}_{\bar{A}}).\] Now the result follows because $|f_{k_0}(A, \mathbf{v}_{\bar{A}})|=1$ and $|a^{(1-p)k_0}|\geq 1$.

(2) This is proved as in 4.3.3 \cite{KL}. By Proposition \ref{326}, we have for $s$ close enough to $1$ the natural map  \[\bar{\phi}_{n}(s): \fM_{H^pp^n}^{\rm ur, rig}(s)\ra \fM_{H^pp^n}^{\rm ur, rig}(s^p),\]by restricting to the intersection of the map in Proposition \ref{326} with $\cM_{H}^{\rm an}$. For $r\ra 1^{-}$, by part (1) we see $\pi_{2,(p)}$ induces a map \[\fM_{H^pp^n}^{(p),\rm ur,rig}(r^p)\ra \fM_{H^pp^n}^{\rm ur,rig}(s)\] for some $r^p\leq s$. If $s\geq r$, we are done. Now assume $s<r$. Thus it suffices to show that their composition 
\[\fM_{H^pp^n}^{(p),\rm ur,rig}(r^p)\stackrel{\pi_{2,(p)}}{\lra} \fM_{H^pp^n}^{\rm ur,ord,rig}(s)\stackrel{\bar{\phi}_{n}(s)}{\lra}\fM_{H^pp^n}^{\rm ur, rig}(s^p)\]
 factors through $\fM_{H^pp^n}^{\rm ur, rig}(r^p)\subset \fM_{H^pp^n}^{\rm ur, rig}(s^p)$, for which it is in turn enough to show the further composition with $\fM_{H^p}^{\rm rig}(s^p)$ 
 \[\fM_{H^pp^n}^{(p),\rm ur,rig}(r^p)\stackrel{\pi_{2,(p)}}{\lra} \fM_{H^pp^n}^{\rm ur,rig}(s)\stackrel{\bar{\phi}_{n}(s)}{\lra}\fM_{H^pp^n}^{\rm ur, rig}(s^p)\stackrel{\rm Ig_n}{\ra} \fM_{H^p}^{\rm rig}(s^p)\]
 factors through $\fM_{H^p}^{\rm rig}(r^p)$. 

Tracing the construction  we know that for $r\ra 1^{-}$ the latter composition is induced by the map  \[\fM_{H^pp^n}^{(p),\rm ur,ord}\stackrel{\pi_{1,(p)}}{\lra} \fM_{H^pp^n}^{\rm ur,ord}\stackrel{\rm Ig_n}{\lra}\fM_{H^p}^{\rm ord}\stackrel{[\cdot p]}{\lra}\fM_{H^p}^{\rm ord}, \]where the map $[\cdot p]$ is the multiplication by $p$ on the level structure of the universal abelian scheme. To see this, using Proposition \ref{23}, we only have to check the statement for the case $r=1$, which is easily seen. Meanwhile, we notice that   $[\cdot p]$  induces the identity map on Hasse invariant, because the Hasse invariant is independent of level structures.  Hence it maps  $\fM_{H^p}^{\rm rig}(r^p)$ to itself. This concludes the proof.  
\end{proof}

 \subsection{Hecke operators on overconvergent Siegel-Hilbert modular forms}
 
 Let $L/\QQ_p$ be a finite extension and $\cR$ an $L$-affinoid algebra with a fixed sub-multiplicative semi-norm extending the norm on $L$, and $Y\in \cR$ such that \[|Y|<|p|^{\frac{1}{p-1}-1}.\]  
 We use the convention that for a $\QQ_p$-analytic space $X$, $X_{\cR}=X\times_{\Sp \QQ_p}\Sp \cR$.  Define
  \[M_{H^pp^n,\kappa+Y,r}(L)=H^0(\fM_{H^pp^n}^{\rm ur,rig}(r)_{L},\omega^{\kappa}),\] 
   \[M_{H^pp^n,\kappa+Y,r}(\cR)=H^0(\fM_{H^pp^n}^{\rm ur,rig}(r)_{\cR},\omega^{\kappa}),\] 
 \[M_{H^pp^n, \kappa}^{\dagger}(L)=\varinjlim_{r\ra 1^{-1}}M_{H^pp^n,\kappa,r}(L),\]
  \[M_{H^pp^n, \kappa+Y}^{\dagger}(\cR)=\varinjlim_{r\ra 1^{-1}}M_{H^pp^n,\kappa+Y,r}(\cR),\]

 \begin{rk}
 Let $U_F^+$ be the group of totally positive units in $\cO_F$, and $U_{F,N}$ its subgroup of elements which are congruent to $1$ modulo $N$.  As noted in 1.11.8 \cite{KL} and Remark \ref{1118}, the moduli spaces we have constructed are for $G'=G\times_{\mathrm{Res}_{F/\QQ}\GG_m}\GG_m$, where the two maps in the product are the determinantal and diagonal ones. This is because multiplying by $U_F^+$ on the polarizations of tuples in $\cM_{H^pp^n}$   is an isomorphism for the subgroup $U_{F,N}^2\subset U_F^+$. We thus get an action of the finite group $U_F^{+}/U_{F,N}^2$ on $\cM_{H^pp^n}$, which induces a natural action on $H^0(\fM_{H^pp^n}^{\rm ur,rig}(r),\omega^{\kappa})$. Hence  the ${U_F^{+}/U_{F,N}^2}$-invariants of the spaces above are the spaces of forms on $G=\mathrm{Res}_{F/\QQ}\GSp_{2g}$.   
 \end{rk}

By abuse of notation, we call the spaces above the spaces of overconvergent Siegel-Hilbert modular forms of level $H^pp^n$ and of weight $\kappa$  (reps. $\kappa+Y$) with coefficients in  $L$ (reps.  $\cR$).

By Proposition \ref{4110} (1), we have two projections \[\pi_{1,\gamma_v}, \pi_{2,\gamma_v}:  \fM_{H^pp^n}^{\gamma_v,\rm
ur,rig}(r)\lra \fM_{H^pp^n}^{\rm
ur,rig}(r). \]We then get  the (pull-back of
) canonical map of sheaves on $\fM_{H^pp^n}^{\gamma_v,\rm
ur,rig}(r)_{\mathcal{R}}$
\[\pi_{2,\gamma_v}^*\omega^{\kappa}\lra
\pi_{1,\gamma_v}^*\omega^{\kappa},\] whose
composition with multiplication by $f_{k_0}^{\frac{Y}{k_0(p-1)}}$
gives the map
\[\pi_{2,\gamma_v}^*\omega^{\kappa}\lra
\pi_{1,\gamma_v}^*\omega^{\kappa}\stackrel{\cdot
f_{k_0}^{\frac{Y}{k_0(p-1)}}}{\lra}
\pi_{1,\gamma_v}^*\omega^{\kappa}.\]Here
\[f_{k_0}^{\frac{Y}{k_0(p-1)}}:
=\mathrm{exp}({\frac{Y}{k_0(p-1)}}\mathrm{log}f_{k_0})\] is a
well-defined element in
$\cO(\fM_{H^pp^n}^{\gamma_v, \mathrm{ur,rig}}(r)_{\mathcal{R}})$ for $r$
such that $|f_{k_0}-1|_r\leq |p|^{\epsilon}$, where $\epsilon$ is chosen to satisfy $|Y||p|^{\epsilon}<|p|^{1/(p-1)}$. We remark that the analyticity of $f_{k_0}^{\frac{Y}{k_0(p-1)}}$ follows from the assumption that $|Y|<|p|^{\frac{1}{p-1}-1}$ (and the assumption that $p\nmid k_0$).

Applying $\pi_{1,\gamma_v *}$ to the above map, and pre-composing it with the map from $\omega^{\kappa}
$ and composing it with the trace map, we obtain

\[\omega^{\kappa}\ra \pi_{1,\gamma_v *}\pi_{2,\gamma_v}^*\omega^{\kappa}\lra
\pi_{1,\gamma_v *}\pi_{1,\gamma_v}^*\omega^{\kappa}\stackrel{\pi_{1,\gamma_v *}\circ\cdot
f_{k_0}^{\frac{Y}{k_0(p-1)}}}{\lra}
\pi_{1,\gamma_v *}\pi_{1,\gamma_v}^*\omega^{\kappa}\stackrel{\rm tr}{\lra}
\omega^{\kappa}.\] Then taking global sections, we get an
endomorphism $T_{\gamma_v}$ of $H^0(\fM_{H^pp^n}^{\rm ur,rig}(r)_{\cR},\omega^{\kappa})$. The resulting Hecke operator will denoted by $T_{v,i}$ (resp. $S_v$) if $\gamma_v=T_{v,i}$ (resp. $S_v$).  

Letting $r\ra 1^-$,
we obtain the Hecke operator on
$M_{H^pp^n,\kappa+Y}^{\dag}(\mathcal{R})$, which is denoted by the same symbol. The ring of endomorphisms generated by all the Hecke operators (with $v$ and $\gamma_v$ varying) is denoted by $\mathbf{T}_{H^pp^n,\kappa+Y}^{\dag}$.  The product of $T_{v,1}$'s for all $v|p$ is denoted by $U_{(p)}$.

The  construction made above and Corollary \ref{fk0} thus give rise to the following

\begin{prop}\label{sep}

Let $\psi_t: \cR\ra L'$ be the character to a finite extension $L'/L$,  which sends $Y$ to $(p-1)k_0t$ for some $t\in \ZZ_{\geq 0}$. We have the following commutative diagram compatible with the actions of Hecke operators $T$:

\begin{equation}\label{classical}
\CD
  M_{H^pp^n,\kappa+Y}^{\dag}(\mathcal{R}) @>\mathrm{Id}\otimes\psi_t>> M_{H^pp^n,\kappa}^{\dag}(L') @>\cdot E^t>> M_{H^pp^n,\kappa\cdot\mathrm{Nm}^{(p-1)k_0t}}^{\dag}(L') \\
  @V T VV @. @V T VV  \\
 M_{H^pp^n,\kappa+Y}^{\dag}(\mathcal{R}) @>\mathrm{Id}\otimes\psi_t>> M_{H^pp^n,\kappa}^{\dag}(L') @>\cdot E^t>> M_{H^pp^n,\kappa\cdot\mathrm{Nm}^{(p-1)k_0t}}^{\dag}(L')  \\
\endCD\end{equation}
\end{prop}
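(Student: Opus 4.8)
The plan is to insert an auxiliary Hecke operator on the middle column $M_{H^pp^n,\kappa}^{\dag}(L')$ and to verify the two resulting squares of (\ref{classical}) one at a time. Fix a double coset $\gamma_v$; it suffices to treat $T=T_{\gamma_v}$, since all generators of $\TT_{H^pp^n,\kappa+Y}^{\dag}$ (and $U_{(p)}$, a product of such) have this form. Let $T'_{\gamma_v}$ denote the endomorphism of $H^0(\fM_{H^pp^n}^{\rm ur,rig}(r)_{L'},\omega^{\kappa})$ obtained by running the construction of $T_{\gamma_v}$ on the constant family of weight $\kappa$ but with the twisting factor $f_{k_0}^{Y/(k_0(p-1))}$ replaced by the honest power $f_{k_0}^{t}$; this is a legitimate element of $\cO(\fM_{H^pp^n}^{\gamma_v,\rm ur,rig}(r))$ for $r$ close to $1$ by Corollary \ref{fk0}, with no restriction on $t$ since $t\in\ZZ_{\geq 0}$. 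I will show that the left square of (\ref{classical}) commutes with $T'_{\gamma_v}$ in the middle, and the right square commutes with $T'_{\gamma_v}$ in the middle and $T_{\gamma_v}$ on the right; stacking them and passing to $\varinjlim_{r\to 1^-}$ then gives the assertion.

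\emph{Left square.} The map $\mathrm{Id}\otimes\psi_t$ is base change along the continuous $\QQ_p$-algebra homomorphism $\psi_t\colon\cR\to L'$. Every geometric ingredient of $T_{\gamma_v}$ — the projections $\pi_{i,\gamma_v}$ (finite flat, in fact finite \'etale), the canonical map $\theta$ of automorphic sheaves, the pull-back and trace maps — is defined over $\QQ_p$ and thus commutes with this base change, so it is enough to follow the twisting factor. Writing $f_{k_0}^{Y/(k_0(p-1))}=\exp\bigl(\tfrac{Y}{k_0(p-1)}\log f_{k_0}\bigr)$ and using that $\psi_t$ is continuous with $\psi_t(Y)=(p-1)k_0 t$, its image under $\mathrm{Id}\,\widehat{\otimes}\,\psi_t$ is $\exp(t\log f_{k_0})=f_{k_0}^{t}$, the last equality because $t\in\ZZ_{\geq 0}$ and $f_{k_0}-1$ — hence also $f_{k_0}^{t}-1$ — is topologically nilpotent, so that $t\log f_{k_0}=\log f_{k_0}^{t}$. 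Therefore $(\mathrm{Id}\otimes\psi_t)\circ T_{\gamma_v}=T'_{\gamma_v}\circ(\mathrm{Id}\otimes\psi_t)$.

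\emph{Right square.} Over $\fM_{H^pp^n}^{\gamma_v,\rm ur,rig}(r)$ one has $\omega^{\kappa\cdot\mathrm{Nm}^{(p-1)k_0 t}}=\omega^{\kappa}\otimes(\det\omega)^{\otimes(p-1)k_0 t}$, and the canonical map $\theta$ is compatible with this tensor factorization, the factor on $(\det\omega)^{\otimes(p-1)k_0}$ being the map of Lemma \ref{func}. By Corollary \ref{fk0}, $\theta(\pi_{2,\gamma_v}^{*}E)=f_{k_0}\,\pi_{1,\gamma_v}^{*}E$, hence $\theta(\pi_{2,\gamma_v}^{*}E^{t})=f_{k_0}^{t}\,\pi_{1,\gamma_v}^{*}E^{t}$; a direct check then shows that multiplication by $E^{t}$, pulled back along $\pi_{1,\gamma_v}$ or $\pi_{2,\gamma_v}$, intertwines the twisted map $f_{k_0}^{t}\cdot\theta\colon\pi_{2,\gamma_v}^{*}\omega^{\kappa}\to\pi_{1,\gamma_v}^{*}\omega^{\kappa}$ with the canonical map $\theta\colon\pi_{2,\gamma_v}^{*}\omega^{\kappa\cdot\mathrm{Nm}^{(p-1)k_0 t}}\to\pi_{1,\gamma_v}^{*}\omega^{\kappa\cdot\mathrm{Nm}^{(p-1)k_0 t}}$. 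Applying $\pi_{1,\gamma_v*}$, precomposing with the unit maps, postcomposing with the trace, and using the projection formula $\mathrm{tr}\bigl(\pi_{1,\gamma_v}^{*}E^{t}\cdot(-)\bigr)=E^{t}\cdot\mathrm{tr}(-)$ to pull $E^{t}$ through the trace, we get $T_{\gamma_v}(E^{t}g)=E^{t}\,T'_{\gamma_v}(g)$ for all overconvergent $g$ of weight $\kappa$, i.e. $T_{\gamma_v}\circ(\cdot E^{t})=(\cdot E^{t})\circ T'_{\gamma_v}$.

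Composing the two squares and taking $\varinjlim_{r\to 1^-}$ completes the proof. The one substantive input is Corollary \ref{fk0}: the relation $\theta(\pi_2^{*}E)=f_{k_0}\pi_1^{*}E$ is exactly what makes multiplication by $E^{t}$ absorb the $f_{k_0}^{t}$-twist and convert the overconvergent Hecke operator attached to the constant weight-$\kappa$ family into the genuine Hecke operator on forms of weight $\kappa\cdot\mathrm{Nm}^{(p-1)k_0 t}$; the remainder is bookkeeping of twisting factors, plus the fact that the geometric constructions commute with flat base change. The step most prone to error is the tracking of radii — for instance $\pi_{2,(p)}$ lands inside $\fM^{\rm ur,rig}(r)$ only after shrinking from $r^{p}$, by Proposition \ref{4110}(2) — but this causes no trouble as all radii in play tend to $1$.
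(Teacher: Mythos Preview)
Your proof is correct and follows essentially the same approach as the paper: both reduce to a single $T_{\gamma_v}$ and rely on Corollary~\ref{fk0} (namely $\theta(\pi_2^*E)=f_{k_0}\pi_1^*E$) together with $\psi_t(f_{k_0}^{Y/(k_0(p-1))})=f_{k_0}^{t}$. The only difference is organizational: the paper verifies the composite identity $\psi_t(T(f\otimes x))\cup E^t=T(f\otimes\psi_t(x)\cup E^t)$ directly at an isogeny $A\to B$, whereas you insert the auxiliary operator $T'_{\gamma_v}$ and check the two squares separately --- a cleaner bookkeeping of the same computation.
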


\begin{proof}

Let $T=T_{\gamma_v}$. Then we are supposed to check that for $f\otimes x\in M_{H^pp^n,\kappa+Y}^{\dag}(\mathcal{R})$,
\numberwithin{equation}{subsection} \begin{equation}\label{speci}
\psi_t(T(f\otimes x))\cup E^t=T(f\otimes \psi_t(x)\cup E^t).\end{equation}(This suffices for the proof since the $T_{\gamma_v}$'s generate the Hecke algebra.)

Now unwinding the definition of the Hecke operator $T$, we see, for $A\ra B$ an isogeny in $\fM_{H^pp^n}^{\gamma_v,\rm ur, rig}(r)$, 
\[T(f\otimes x)(A)=f(B)\otimes f_{k_0}^{\frac{Y}{k_0(p-1)}}x.\]Then, applied to $(A\ra B)$, the left hand side of   the equality (\ref{speci}) is equal to  
\[f(B)\otimes \psi_t(x)\cup E^t(B) ,\]while the right hand side is equal to
\[(f(B)\otimes \psi_t(x))\cup f_{k_0}^tE^t(A), \]since $\psi_t(Y)=k_0(p-1)t$. Now the result follows from Corollary \ref{fk0}.
\end{proof}


\begin{prop}\label{res}

For $s<r<1$ with $s$ sufficiently close to $1$, the following natural inclusion   is completely continuous:

\[\mathrm{Res}(s,r): H^0(\fM_{H^pp^n}^{\rm ur,rig}(s),\omega^{\kappa})\hookrightarrow H^0(\fM_{H^pp^n}^{\rm ur,rig}(r),\omega^{\kappa}).\]

\end{prop}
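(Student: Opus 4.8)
The plan is to deduce complete continuity of $\mathrm{Res}(s,r)$ from the relative compactness relation $\bar{\fM}_{H^pp^n}^{\rm ur,rig}(r)\Subset \bar{\fM}_{H^pp^n}^{\rm ur,rig}(s)$ established in (\ref{forcontinuity}), together with the fact (Lemma \ref{koecher}, the K\"{o}cher principle) that sections of $\omega^\kappa$ over $\fM_{H^pp^n}^{\rm ur,rig}(r)$ coincide with those over $\bar{\fM}_{H^pp^n}^{\rm ur,rig}(r)$, so that one may work with the quasi-compact spaces $\bar{\fM}_{H^pp^n}^{\rm ur,rig}(s)$ and $\bar{\fM}_{H^pp^n}^{\rm ur,rig}(r)$ throughout. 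Concretely, I would invoke the general principle (as in 2.4.1 and 4.2.1 of \cite{KL}, ultimately going back to \cite{Co}): if $U\Subset_V X$ is a relatively compact admissible open inclusion of quasi-compact rigid spaces and $\cF$ is a coherent sheaf on $X$, then the restriction map $H^0(X,\cF)\to H^0(U,\cF)$ is completely continuous for the natural Banach space structures.

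The key steps, in order: first, reduce from the open Siegel-Hilbert loci $\fM_{H^pp^n}^{\rm ur,rig}(\cdot)$ to the compactified ones $\bar{\fM}_{H^pp^n}^{\rm ur,rig}(\cdot)$ using Lemma \ref{koecher}, which identifies the two spaces of modular forms and also identifies the Banach norms (here one also uses Proposition \ref{norm} to know the norm is the sup norm and is insensitive to deleting the boundary tube). Second, recall from (\ref{forcontinuity}) that $\bar{\fM}_{H^pp^n}^{\rm ur,rig}(r)\Subset \bar{\fM}_{H^pp^n}^{\rm ur,rig}(s)$ for $s<r$ both sufficiently close to $1$; this is the input that makes the map ``small''. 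Third, apply the coherent-sheaf version of Coleman's complete continuity criterion: since $\bar{\fM}_{H^pp^n}^{\rm ur,rig}(s)$ is quasi-compact, $\omega^\kappa$ is coherent on it, and the inclusion of $\bar{\fM}_{H^pp^n}^{\rm ur,rig}(r)$ is relatively compact over $\Sp K$, the restriction on global sections is completely continuous. Fourth, transport this back through the K\"{o}cher identifications to conclude that $\mathrm{Res}(s,r)$ on the spaces $H^0(\fM_{H^pp^n}^{\rm ur,rig}(\cdot),\omega^\kappa)$ is completely continuous. I would present this essentially by citing the proof of 4.2.1 (or the analogous statement) in \cite{KL}, since the argument is formally identical once the geometric inputs $\bar{\fM}(r)\Subset\bar{\fM}(s)$ and the K\"{o}cher principle are in place.

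The main obstacle is purely bookkeeping rather than conceptual: one must make sure the relative compactness statement (\ref{forcontinuity}) is available simultaneously for the \emph{same} pair $(s,r)$ for which the K\"{o}cher principle (Lemma \ref{koecher}) holds and for which the canonical subgroup and Igusa tower are defined — i.e. that the ``sufficiently close to $1$'' conditions from Theorem \ref{822}, Lemma \ref{koecher}, and (\ref{forcontinuity}) can all be met at once. Since each of these is an open condition near $1$, intersecting finitely many such conditions is harmless, but it should be stated. A secondary point is to confirm that the Banach space structures used on source and target — the sup-norm structures from Proposition \ref{norm}, respectively its analogue without the $\gamma_v$-decoration — are exactly the ones for which the abstract complete continuity criterion applies (they are, being the coherent-cohomology Banach structures on quasi-compact rigid spaces), so no compatibility of norms is lost in the reduction.

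\begin{proof}
By the K\"{o}cher principle (Lemma \ref{koecher}), for $r<1$ sufficiently close to $1$ the restriction maps identify
\[
H^0(\bar{\fM}_{H^pp^n}^{\rm ur,rig}(r),\omega^{\kappa})\stackrel{\sim}{\ra}H^0(\fM_{H^pp^n}^{\rm ur,rig}(r),\omega^{\kappa}),
\]
compatibly with the sup-norm Banach structures (cf. Proposition \ref{norm}). It therefore suffices to prove that for $s<r<1$ both sufficiently close to $1$, the restriction map
\[
H^0(\bar{\fM}_{H^pp^n}^{\rm ur,rig}(s),\omega^{\kappa})\lra H^0(\bar{\fM}_{H^pp^n}^{\rm ur,rig}(r),\omega^{\kappa})
\]
is completely continuous. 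Choose $s<r<1$ close enough to $1$ so that the conclusions of Theorem \ref{822}, Lemma \ref{koecher} and relation (\ref{forcontinuity}) all hold; being open conditions near $1$, this is possible. Then $\bar{\fM}_{H^pp^n}^{\rm ur,rig}(s)$ and $\bar{\fM}_{H^pp^n}^{\rm ur,rig}(r)$ are quasi-compact rigid spaces over $\Sp K$, the sheaf $\omega^{\kappa}$ is coherent on $\bar{\fM}_{H^pp^n}^{\rm ur,rig}(s)$, and by (\ref{forcontinuity}) the admissible open inclusion $\bar{\fM}_{H^pp^n}^{\rm ur,rig}(r)\Subset \bar{\fM}_{H^pp^n}^{\rm ur,rig}(s)$ is relatively compact. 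The complete continuity of the restriction map on global sections of a coherent sheaf along a relatively compact inclusion of quasi-compact rigid spaces is a general fact (cf. the argument in the proof of 4.2.1 \cite{KL}, ultimately relying on \cite{Co}, P. 425). Transporting the conclusion back through the K\"{o}cher identifications yields that $\mathrm{Res}(s,r)$ is completely continuous, as desired.
\end{proof}
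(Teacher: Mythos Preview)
Your proof is correct and follows essentially the same route as the paper: reduce via the K\"ocher principle (Lemma \ref{koecher}) to the compactified spaces $\bar{\fM}_{H^pp^n}^{\rm ur,rig}(\cdot)$, invoke the relative compactness (\ref{forcontinuity}), and conclude by the general complete-continuity criterion for restriction of coherent sheaves along a relatively compact inclusion (the paper cites Proposition 2.4.1 of \cite{KL} rather than 4.2.1). Your additional remarks about simultaneously satisfying the various ``sufficiently close to $1$'' conditions are helpful bookkeeping but not a departure from the paper's argument.
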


\begin{proof}

It is equivalent to showing this for the natural inclusion \[H^0(\bar{\fM}_{H^pp^n}^{\rm ur,ord,rig}(s),\omega^{\kappa})\hookrightarrow H^0(\bar{\fM}_{H^pp^n}^{\rm ur,rig}(r),\omega^{\kappa})\] by the K\"{o}cher principle Lemma \ref{koecher}. 

Recall $\bar{\fM}_{H^pp^n}^{\rm ur,rig}(r)\Subset \bar{\fM}_{H^pp^n}^{\rm ur,rig}(s)$ from (\ref{forcontinuity}).  Now one concludes by Proposition 2.4.1 \cite{KL}.\end{proof}

\begin{lemma}\label{realize}
Suppose $r$ is close enough to
$1$. The Hecke operator $U_{(p)}$ on the $\mathcal{R}$-module
$M_{H^pp^n,\kappa+Y,r}(\mathcal{R})$ can be constructed as the
composition of  the natural inclusion $\mathrm{Res}(r^p,r)$ and the following  map induced by $\pi_{2,(p)}$:

\begin{equation}\label{p2}
H^0(\fM_{H^pp^n}^{\rm ur,rig}(r), \omega^{\kappa})\rightarrow H^0(\fM_{H^pp^n}^{\rm ur,rig}(r^p), \omega^{\kappa}).\end{equation}

\end{lemma}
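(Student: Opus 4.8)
The plan is to recall that the Hecke operator $U_{(p)} = T_{(p)}$ was constructed in Section \ref{corr} (and extended to the overconvergent and family settings in the previous subsection) out of three ingredients: the pull-back map on automorphic sheaves, multiplication by a suitable power of $f_{k_0}$ (which for $Y=0$ is trivial), and the trace map along $\pi_{1,(p)}$, with the source and target being $H^0$ of $\omega^\kappa$ over $\fM_{H^pp^n}^{\rm ur,rig}(r)$. So the content of the lemma is really an \emph{identification} of that abstract composition with a concrete two-step map: first restrict sections from radius $r$ to radius $r^p$ (this makes sense because $r^p < r$ when $r<1$, so $\fM_{H^pp^n}^{\rm ur,rig}(r) \supset \fM_{H^pp^n}^{\rm ur,rig}(r^p)$, as follows from (\ref{luk})), and then apply the map (\ref{p2}) induced by $\pi_{2,(p)}$. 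Here I must use Proposition \ref{4110}(2), which guarantees that for $r$ close to $1$, $\pi_{2,(p)}$ genuinely carries $\fM_{H^pp^n}^{(p),\rm ur,rig}(r^p)$ into $\fM_{H^pp^n}^{\rm ur,rig}(r)$, so pulling back along $\pi_{2,(p)}$ and then pushing forward along the finite \'etale $\pi_{1,(p)}$ lands in the right space at radius $r^p$.

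First I would spell out the abstract definition: over $\fM_{H^pp^n}^{(p),\rm ur,rig}(r)$ one has $\theta : \pi_{2,(p)}^*\omega^\kappa \to \pi_{1,(p)}^*\omega^\kappa$, one applies $\pi_{1,(p)*}$, composes $\omega^\kappa \to \pi_{1,(p)*}\pi_{2,(p)}^*\omega^\kappa \xrightarrow{\pi_{1,(p)*}\theta} \pi_{1,(p)*}\pi_{1,(p)}^*\omega^\kappa \xrightarrow{\rm tr} \omega^\kappa$, and takes $H^0$. Now by Proposition \ref{31} (or Proposition \ref{326}) the map $\phi^n$ underlying $\pi_{2,(p)}$, on the relevant rigid tubes, is the Frobenius-type map sending $A$ to $A/A[p]^{\rm mult}$, which takes radius $r$ to radius $r^p$ (resp.\ carries $\fM_{H^pp^n}^{(p),\rm ur,rig}(r^p)$ back to radius $r$). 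So the adjunction data $\omega^\kappa \to \pi_{1,(p)*}\pi_{2,(p)}^*\omega^\kappa$, after taking global sections and using that $\pi_{1,(p)}$ is finite \'etale, \emph{is} precisely the restriction $\mathrm{Res}(r^p,r)$ composed with the pull-back along $\pi_{2,(p)}$ of a section from radius $r$: i.e.\ a global section $f$ at radius $r$ first restricts to radius $r^p$ on the image, then gets pulled back under $\pi_{2,(p)}$ to a section of $\pi_{2,(p)}^*\omega^\kappa$ on $\fM_{H^pp^n}^{(p),\rm ur,rig}(r^p)$. The remaining operations $\pi_{1,(p)*}\theta$ and $\mathrm{tr}$ are exactly what turns this pulled-back section back into a section of $\omega^\kappa$ at radius $r^p$, which is the content of the map (\ref{p2}) together with the usual trace. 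One should also note $f_{k_0}^{Y/(k_0(p-1))}$ specializes to $1$ when $Y=0$, so the twist drops out and we are in the clean situation of Section \ref{corr}.

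Concretely, I would unwind the formula for $U_{(p)}$ on points exactly as in the proof of Proposition \ref{sep}: for an isogeny $A\to B$ of type $(p)$ in $\fM_{H^pp^n}^{(p),\rm ur,rig}(r)$, one has $U_{(p)}(f)(A) = \sum f(B)$ with the sum over the relevant sections of $\pi_{1,(p)}$ (incorporating the identification $\pi_{2,(p)}^*\omega \cong \pi_{1,(p)}^*\omega$ built from the isogeny on differentials), which is manifestly the composite ``evaluate $f$ at radius $r$ on the target of the universal isogeny, pull back, take trace''. Matching this with ``$\mathrm{Res}(r^p,r)$ then the $\pi_{2,(p)}$-map (\ref{p2})'' is then a matter of tracking which tube the target $B$ lands in: by Proposition \ref{4110}(2), $B$ lies in $\fM_{H^pp^n}^{\rm ur,rig}(r)$ when $A$ lies at radius $r^p$, so $f$ need only be known at radius $r$ to define $U_{(p)}(f)$ at radius $r^p$, and the restriction $\mathrm{Res}(r^p,r)$ is exactly the bookkeeping of this radius change.

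The main obstacle, as I see it, is purely the radius bookkeeping: one must be careful that the various ``for $r$ close enough to $1$'' hypotheses in Propositions \ref{31}, \ref{326}, \ref{4110} are simultaneously satisfiable, so that the chain $\fM_{H^pp^n}^{(p),\rm ur,rig}(r^p) \xrightarrow{\pi_{2,(p)}} \fM_{H^pp^n}^{\rm ur,rig}(r) \supset \fM_{H^pp^n}^{(p),\rm ur,rig}(r)$ is well-defined and that the trace along $\pi_{1,(p)}$ produces a section over $\fM_{H^pp^n}^{\rm ur,rig}(r^p)$ rather than over a smaller or incompatible tube; once these are aligned, the identification of the two descriptions of $U_{(p)}$ is formal. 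Everything else — the fact that pull-back followed by trace along a finite \'etale map computes the Hecke operator, and that the $Y=0$ specialization kills the $f_{k_0}$-twist — is routine and already implicit in Section \ref{corr} and the constructions preceding this lemma.
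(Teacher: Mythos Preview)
Your approach is essentially the same as the paper's (which simply cites Proposition \ref{4110}(2)), but there is a genuine slip in your radius bookkeeping that you should fix. You write that $\fM_{H^pp^n}^{\rm ur,rig}(r) \supset \fM_{H^pp^n}^{\rm ur,rig}(r^p)$ and cite (\ref{luk}); but (\ref{luk}) says exactly the opposite. By definition $\bar X(r)=\{|\tilde h_i|\geq r\}$, so as $r$ decreases the set grows, and since $r^p<r$ for $r<1$ we have $\fM_{H^pp^n}^{\rm ur,rig}(r^p)\supset \fM_{H^pp^n}^{\rm ur,rig}(r)$. Consequently $\mathrm{Res}(r^p,r)$ is the restriction $H^0(\fM_{H^pp^n}^{\rm ur,rig}(r^p),\omega^\kappa)\hookrightarrow H^0(\fM_{H^pp^n}^{\rm ur,rig}(r),\omega^\kappa)$, not a map in the direction you describe in your first paragraph.

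This also forces the order of composition: one applies the map (\ref{p2}) first, landing in $H^0$ over the larger space $\fM_{H^pp^n}^{\rm ur,rig}(r^p)$, and then restricts back via $\mathrm{Res}(r^p,r)$; your first paragraph has these reversed. Your third paragraph actually has the right picture (``$f$ need only be known at radius $r$ to define $U_{(p)}(f)$ at radius $r^p$''), so the error is an inconsistency rather than a conceptual misunderstanding. Once the inclusion and the order are corrected, your unwinding of the Hecke correspondence via $\pi_{1,(p)}$ (finite \'etale, defining $\fM^{(p)}(r^p)$ over $\fM(r^p)$) and $\pi_{2,(p)}$ (landing in $\fM(r)$ by Proposition \ref{4110}(2)) is exactly right and matches the paper.
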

\begin{proof}
This follows from  Proposition \ref{4110} (2).
\end{proof}

\begin{coro}\label{up}
For $r$ close enough to $1$,
the action of $U_{(p)}$ on $M_{H^pp^n,\kappa+Y,r}(\mathcal{R})$ is
completely continuous.
\end{coro}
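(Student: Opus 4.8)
The plan is to deduce the corollary directly from the two preceding results, Lemma~\ref{realize} and Proposition~\ref{res}. By Lemma~\ref{realize}, for $r$ sufficiently close to $1$ the operator $U_{(p)}$ on $M_{H^pp^n,\kappa+Y,r}(\mathcal{R})$ factors as the composite of the restriction map $\mathrm{Res}(r^p,r)\colon H^0(\fM_{H^pp^n}^{\rm ur,rig}(r),\omega^{\kappa})\hookrightarrow H^0(\fM_{H^pp^n}^{\rm ur,rig}(r^p),\omega^{\kappa})$ with the map \eqref{p2} induced by $\pi_{2,(p)}$, together with the multiplication by $f_{k_0}^{Y/(k_0(p-1))}$ built into the definition of the Hecke operator over $\mathcal{R}$. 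I would first fix $r$ close enough to $1$ so that all the statements invoked (the canonical subgroup, Proposition~\ref{4110}(2), Proposition~\ref{res} with $s=r$ and the ambient radius $r^p<r$, and the bound $|f_{k_0}-1|_r\le |p|^{\epsilon}$ from Corollary~\ref{fk0}) hold simultaneously; this is just taking the minimum of finitely many thresholds.

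Next I would recall the key functional-analytic fact, Proposition~2.4.1 of \cite{KL} (already used in the proof of Proposition~\ref{res}): if $U\Subset_V X$ is a relatively compact inclusion of quasi-compact rigid spaces and $\cF$ is a coherent sheaf, then the restriction $H^0(X,\cF)\to H^0(U,\cF)$ is completely continuous. Applying this with the relative compactness $\bar{\fM}_{H^pp^n}^{\rm ur,rig}(r)\Subset \bar{\fM}_{H^pp^n}^{\rm ur,rig}(r^p)$ from \eqref{forcontinuity} (valid since $r^p<r$ and both are close to $1$), together with the K\"ocher principle Lemma~\ref{koecher} to pass between the partially compactified spaces and the open loci, gives that $\mathrm{Res}(r^p,r)$ is completely continuous. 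Here one should also remember to base change to $\mathcal{R}$: complete continuity is preserved under $-\widehat{\otimes}_L\mathcal{R}$, so the map stays completely continuous on $M_{H^pp^n,\kappa+Y,r}(\mathcal{R})$.

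Finally I would invoke the ideal-like behaviour of completely continuous maps in a Banach-module category: the composition of a completely continuous map with any continuous ($\mathcal{R}$-linear, bounded) map on either side is again completely continuous. The map \eqref{p2} induced by $\pi_{2,(p)}$ is continuous because $\pi_{2,(p)}$ is finite (so pullback of sections is bounded for the sup norms of Proposition~\ref{norm}), and multiplication by the unit $f_{k_0}^{Y/(k_0(p-1))}\in\cO(\fM_{H^pp^n}^{(p),\rm ur,rig}(r)_{\mathcal{R}})$ is continuous since that element is a bounded analytic function (this is exactly where the hypothesis $|Y|<|p|^{\frac{1}{p-1}-1}$ and $p\nmid k_0$, guaranteeing convergence of $\exp(\tfrac{Y}{k_0(p-1)}\log f_{k_0})$, is used). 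Composing these continuous maps with the completely continuous $\mathrm{Res}(r^p,r)$ and then taking the induced endomorphism of $M_{H^pp^n,\kappa+Y,r}(\mathcal{R})$ yields that $U_{(p)}$ is completely continuous. I do not expect a serious obstacle here; the only point needing a little care is bookkeeping the radii and the base change to $\mathcal{R}$ so that ``complete continuity'' is applied to the right map between the right Banach $\mathcal{R}$-modules, and checking that the factorization of $U_{(p)}$ in Lemma~\ref{realize} is compatible with the twist by $f_{k_0}^{Y/(k_0(p-1))}$ present in the $\mathcal{R}$-coefficient version of the Hecke operator.
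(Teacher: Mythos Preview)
Your proposal is correct and follows essentially the same route as the paper: factor $U_{(p)}$ via Lemma~\ref{realize} as the map \eqref{p2} followed by the restriction, invoke Proposition~\ref{res} (i.e.\ the relative compactness \eqref{forcontinuity} together with 2.4.1 of \cite{KL}) to see that the restriction is completely continuous, and conclude using that completely continuous maps form an ideal under composition with continuous maps. Your extra bookkeeping on the base change to $\mathcal{R}$ and the twist by $f_{k_0}^{Y/(k_0(p-1))}$ is more explicit than the paper's three-line proof but does not change the argument; the only slip is that you wrote the source and target of $\mathrm{Res}(r^p,r)$ backwards in the displayed formula (it should go from $H^0(\ldots(r^p),\omega^{\kappa})$ to $H^0(\ldots(r),\omega^{\kappa})$, since $r^p<r$).
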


\begin{proof}
We know by Proposition \ref{res} that the map $\mathrm{Res}(r,r^p)$ is completely continuous. Moreover, the map (\ref{p2}) is continuous. Since a composition of a continuous map followed by a completely continuous one is again completely continuous, we are done.
\end{proof}

\begin{rk}

It is the eigenvalues of the Hecke operator $U_{(p)}$ that we will interpolate, since we only construct a one-parameter family of overconvergent Siegel-Hilbert eigenforms. 

\end{rk}

 \subsection{Constructing families of overconvergent Siegel-Hilbert modular forms}

\subsubsection{The set up}

Recall that $T_{g}$ is the standard diagonal maximal torus of $\GSp_{2g/\mathbf{Z}}$. Denote by $c: T_g \rightarrow \mathbf{G}_m$ the character:
\begin{eqnarray*}
 c: \left( \begin{array}{rrrrrr}  a_1 & & & & & \\  & \ddots & & & & \\ & & a_g & & & \\ & & & b a_1^{-1} & & \\ & & & & \ddots & \\ & & & & & b a_g^{-1}    \end{array} \right) \mapsto a_1 \cdots a_g b^{-2}.
\end{eqnarray*}

Let $\mathcal{W}$ be the rigid space whose $E$-valued points are continuous homomorphisms in \[\Hom_{\rm cont}(T_g(\cO\otimes_{\ZZ}\ZZ_p),E^{\times})\] for any (not necessarily finite) field extension $E/\QQ_p$. 

In the rest of the paper, we fix a classical weight $\kappa$ and a finite extension $L/\QQ_p$. For our purpose we only need the part of the weight space that ``differs" from our fixed weight $\kappa$ by parallel weights. Thus let $\mathcal{W}_{\kappa}$ be the admissible subspace
of $\mathcal{W}$ whose $E$-valued points, for $E\subset \CC_p$ a
closed subfield containing $L$, are
\[\mathcal{W}_{\kappa}(E)=\{\chi=\kappa\cdot
(\tau\circ c \circ \mathrm{Nm}): T_g(\cO\otimes_{\ZZ}\ZZ_p)\ra E^{\times}\}\] for some continuous character
$\tau: \mathbf{Z}_p^{\times} \ra E^{\times}$ satisfying
\[v_p(1-\tau(t))>\frac{1}{p-1},\quad  t \in \mathbf{Z}_p^{\times}.\] 

We define a rigid analytic function $Y$ on $\mathcal{W}_{\kappa}$ as follows: if $\chi \in \mathcal{W}_{\kappa}(E)$ is as above, and is associated to $\tau:\mathbf{Z}_p^{\times} \rightarrow E^{\times}$, then the value of $Y$ at $\chi$ is given by
\[
Y(\chi) = \frac{\log \tau(t)}{\log t}
\]
for $t \in \mathbf{Z}_p^{\times}$ sufficiently close to the identity.

By the construction above, we have that  $|Y|<|p|^{\frac{1}{p-1}-1}$, hence the Banach module
$M_{H^pp^n,\kappa+Y}^{\dag}(\mathcal{R})$ of overconvergent forms is well-defined, for  any $\Sp \cR\subset \mathcal{W}_{\kappa}$. Let $\TT_{H^pp^n,\kappa+Y}^{\dag,\rm ur}$ be the closure of the ring of endomorphisms on $M_{H^pp^n,\kappa+Y}^{\dag}(\mathcal{R})$ generated by the Hecke operators at the places away from the level, under the norm defined in Proposition \ref{norm}. 
 
 \begin{prop}\label{comm}
 The $\mathcal{R}$-algebra $\TT_{H^pp^n,\kappa+Y}^{\dag,\rm ur}$ is commutative.
 \end{prop}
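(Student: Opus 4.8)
The plan is to prove commutativity by reducing it to the classical situation via the interpolation diagram of Proposition \ref{sep}. The key point is that the Hecke operators in question are the spherical ones at places away from the level $H^pp^n$, and these commute on the spaces of \emph{classical} Siegel-Hilbert modular forms because the spherical Hecke algebra $\cH_v^{\rm sph}$ for $\GSp_{2g}(F_v)$ is commutative and the actions at distinct places $v$ commute by the usual disjointness argument on the corresponding moduli of isogenies. So the strategy is to transport this classical commutativity back to the overconvergent family via a density/separation argument.

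First I would fix two Hecke operators $T=T_{\gamma_v}$ and $T'=T_{\gamma_{v'}}$ among the generators of $\TT_{H^pp^n,\kappa+Y}^{\dag,\rm ur}$ and consider the commutator $[T,T']$ acting on $M_{H^pp^n,\kappa+Y}^{\dag}(\cR)$. It suffices to show $[T,T']=0$ on $M_{H^pp^n,\kappa+Y,r}(\cR)$ for $r$ close to $1$. Next I would invoke the specialization maps $\mathrm{Id}\otimes\psi_t$ of Proposition \ref{sep}: for each integer $t\in\ZZ_{\geq 0}$ with $\psi_t(Y)=(p-1)k_0t$, the diagram (\ref{classical}) is compatible with every Hecke operator $T$, hence with the commutator $[T,T']$. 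After multiplication by $E^t$, the image lands in $M_{H^pp^n,\kappa\cdot\mathrm{Nm}^{(p-1)k_0t}}^{\dag}(L')$, and by the Koecher principle together with the interpretation of overconvergent forms of classical weight, a form in this space that is genuinely classical has $[T,T']$ acting as zero — because there $T$ and $T'$ are the restrictions of the classical geometric Hecke correspondences built in Section \ref{corr}, which commute by commutativity of $\cH_v^{\rm sph}$ and disjointness at $v\neq v'$. (One should be slightly careful: overconvergent forms of classical weight are not automatically classical, so the cleanest route is to observe that the geometric construction of $T_{\gamma_v}$ in Section \ref{corr} already commutes at the level of correspondences on $\cM_{H}^{\gamma_v}$ and $\cM_H^{\gamma_{v'}}$, since forming the fiber product $\cM_H^{\gamma_v}\times_{\cM_H}\cM_H^{\gamma_{v'}}$ parametrizing pairs of isogenies is symmetric in the two isogeny data when $v\neq v'$, and for $v=v'$ commutativity is internal to $\cH_v^{\rm sph}$.)

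With classical commutativity in hand, the final step is the separation argument: the element $[T,T']$ of $\cO(\fM_{H^pp^n}^{\rm ur,rig}(r)_\cR)$-linear endomorphisms annihilates $M_{H^pp^n,\kappa+Y,r}(\cR)$ after specialization at $\psi_t$ for every $t\in\ZZ_{\geq 0}$ with large enough $p$-adic valuation, and the set of such characters $\psi_t$ is Zariski-dense in (an affinoid neighborhood of the origin of) the weight disc $\mathcal{W}_\kappa$; equivalently the corresponding values of $Y$, namely $(p-1)k_0t$, accumulate at $0$ and hence are not contained in the zero locus of any nonzero rigid function on $\Sp\cR$. Since $M_{H^pp^n,\kappa+Y,r}(\cR)$ is (via Proposition \ref{norm}) a Banach $\cR$-module and the entries of the matrix of $[T,T']$ in a suitable orthonormalizable basis are elements of $\cR$ vanishing under all these specializations, each such entry is $0$; therefore $[T,T']=0$ on $M_{H^pp^n,\kappa+Y,r}(\cR)$, and passing to the limit $r\to 1^-$ and to the closure in the norm of Proposition \ref{norm} gives commutativity of $\TT_{H^pp^n,\kappa+Y}^{\dag,\rm ur}$.

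I expect the main obstacle to be the bookkeeping in the second step: making precise that the specialized operator genuinely agrees with the classical geometric Hecke operator on the target space $M_{H^pp^n,\kappa\cdot\mathrm{Nm}^{(p-1)k_0t}}^{\dag}(L')$ — i.e. that twisting by $E^t$ and specializing $Y\mapsto (p-1)k_0t$ converts the "overconvergent" operator (which carries the correction factor $f_{k_0}^{Y/(k_0(p-1))}$) into the honest geometric correspondence on $\fM_{H^pp^n}^{\gamma_v,\rm ur,rig}$. This is exactly the content of the commutative square (\ref{classical}) in Proposition \ref{sep}, so the real work is simply to quote it correctly and to verify that the restriction of the geometric correspondence of Section \ref{corr} to the (rigid analytic) overconvergent neighborhood, which is where commutativity at \emph{classical} weight is visible, is compatible with the abstract definition; once that identification is granted, the density argument is routine.
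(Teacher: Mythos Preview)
Your proposal is correct and follows essentially the same route as the paper: specialize via Proposition~\ref{sep} at the Zariski-dense set of classical weights $Y\mapsto(p-1)k_0t$, use that the specialized Hecke algebras are commutative (being completions of commutative algebras of geometric Hecke correspondences), and conclude by a density/separation argument. The paper phrases this as an injection $\TT_{H^pp^n,\kappa+Y}^{\dag,\rm ur}\hookrightarrow\prod_{w\in\mathcal{W}_{\kappa}^{\rm cl}}\TT_{H^pp^n,w}^{\dag,\rm ur}$ into a product of commutative rings, which is the same argument in slightly more compressed form; your extra care in noting that commutativity at classical weight is seen at the level of the geometric correspondences on $\cM_H^{\gamma_v}$ (rather than via classicality of overconvergent forms) is exactly the point the paper is invoking when it says ``completion of a commutative algebra of Hecke correspondences.''
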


\begin{proof}	 


Let $\mathcal{W}_{\kappa}^{\rm cl}$ be a Zariski dense set of integral weights in the $Y$-neighbourhood of $\kappa$, which can be achieved by taking the parameters $t$ to be sufficiently large powers of $p$, by Proposition \ref{sep}. By the analyticity obtained in Proposition \ref{sep}, each element in  $\TT_{H^pp^n,\kappa+Y}^{\dag,\rm ur}$ is determined by the Zariski dense set $\mathcal{W}_{\kappa}^{\rm cl}$. We then have the injection 
\[\TT_{H^pp^n,\kappa+Y}^{\dag,\rm ur}\hookrightarrow \prod_{w\in \mathcal{W}_{\kappa}^{\rm cl}}\TT_{H^pp^n,w}^{\dag,\rm ur},\]where each factor is the specialization. On the other hand, each Hecke ring $\TT_{H,p^n,w}^{\dag,\rm ur}$ with the fixed integral weight $w$ is commutative, being the completion of a commutative algebra of Hecke correspondences. Thus the product over $\mathcal{W}_{\kappa}^{\rm cl}$ is commutative, so is its subring $\TT_{H^pp^n,\kappa+Y}^{\dag,\rm ur}$.
\end{proof}

Let \[Z_{\kappa}=   \Sp \TT_{H^pp^n,\kappa+Y}^{\dag,\rm ur}  \]be the
rigid space over $L$ associated to the $\mathcal{R}$-algebra $\TT_{H^pp^n,\kappa+Y}^{\dag,\rm ur}$. It comes with the weight map 
 \[\underline{w}: Z_{\kappa}\ra  \Sp \cR\subset \mathcal{W}_{\kappa}.\] 
Define
\[X_{\kappa}=Z_{\kappa}\times
\mathbb{G}_{m}.\]  Write $x_p$ for the
canonical co-ordinate on $\mathbb{G}_{m}$.

\subsubsection{Construction by the Coleman-Mazur machinery}

Now we are ready to define the one parameter families of overconvergent Siegel-Hilbert modular forms  of level
$H^pp^n$ as Coleman and Mazur proceed in \cite{CM}. For our purpose it is enough to 
construct it over any affinoid quasi-compact subset
$\mathrm{Sp}\mathcal{R}\subset \mathcal{W}_{\kappa}$. We fix such an $\cR$ from now on.

 Set $\mathcal{H}$ to be the (topological) commutative ring generated by the formal
variables $X_{(p)}$, together with $X_{v,i},Y_v$ (h ere $i=1,\cdots , g$) for all prime ideals $v \subset \cO$ away from the level. Let \[\iota:
\mathcal{H}\ra \cO(X_{\kappa}) \] be the map sending: 
\[X_{v,i} \mapsto t_{v,i},\]
\[Y_v \mapsto s_v,  \]
\[X_{(p)} \mapsto x_p. \]
Here we have denoted by $t_{v,i}$ and $s_v$ the image of the Hecke operators $T_{v,i}$ and $S_v$ in $\cO(X_{\kappa})$ respectively, regarded as functions on $Z_{\kappa}$. Then $\mathcal{H}$ acts on $M_{H^pp^n,\kappa+Y}^{\dag}(\mathcal{R})$ with the action factoring through $\iota(\mathcal{H})$.

For $r$ sufficiently close to $1$, we know by Corollary \ref{up} that $U_{(p)}$
acts on $(M_{H^pp^n,\kappa+Y,r})(\mathcal{R})$ completely
continuously. This implies that the action of $\iota(\alpha)U_{(p)}$ on
$M_{H^pp^n,\kappa+Y,r}(\mathcal{R})$ is completely continuous
for any $\alpha\in \mathcal{H}$. Following Section 4 of \cite{CM}, we
can, for each $\alpha\in \mathcal{H}$, form the Fredholm series
\[P_{\alpha}(T)=\mathrm{det}_{\mathcal{R}}(1-\iota(\alpha)U_{(p)}T|M_{H^pp^n,\kappa+Y,r}(\mathcal{R}))\in \cR[[T]],\]which
is independent of the choice of $r$ (for $r$ sufficiently close to
$1$) by the lemma  below.

\begin{lemma}\label{independ}
Let $0<r<r'<1$ with $r$ sufficiently close to $1$. Then the
Banach $\mathcal{R}$-module $M_{H^pp^n,\kappa+Y,r}(\mathcal{R})$
admits an orthogonal basis, which is also an orthogonal basis for
the $\mathcal{R}$-submodule $M_{H^pp^n,\kappa+Y,r'}(\mathcal{R})$.
\end{lemma}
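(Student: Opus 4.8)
The plan is to reduce the statement to a concrete comparison of the Banach modules of overconvergent forms attached to the nested rigid domains $\fM_{H^pp^n}^{\rm ur,rig}(r') \Subset \fM_{H^pp^n}^{\rm ur,rig}(r)$, and then invoke the standard formal‑model picture for such inclusions. Concretely, I would first pass, via the K\"ocher principle (Lemma \ref{koecher}), from the sections of $\omega^{\kappa}$ over $\fM_{H^pp^n}^{\rm ur,rig}(r)$ to sections over the ambient complete space $\bar{\fM}_{H^pp^n}^{\rm ur,rig}(r)$, so that we are dealing with global sections of a coherent sheaf on a quasi‑compact rigid space admitting a nice formal model. By the construction in Section \ref{overconvergence}, both $\bar{\fM}_{H^pp^n}^{\rm ur,rig}(r)$ and $\bar{\fM}_{H^pp^n}^{\rm ur,rig}(r')$ are, for $r$ close to $1$, obtained as tubular neighbourhoods cut out by the condition $|\tilde h_i|\ge r$ (resp. $\ge r'$) on a fixed formal model $\bar{\fM}_{H^pp^n}^{\rm ur}$; the inclusion is thus the rigid generic fibre of an admissible formal blow‑up, and the two domains share the same underlying reduction up to that blow‑up.

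The key step is the existence of the common orthogonal basis. Here I would follow the argument of Coleman (and of Kisin--Lai in the proof surrounding 4.3.x of \cite{KL}): choose a finite affine formal covering of the model on which $\omega^{\kappa}$ is free, so that $M_{H^pp^n,\kappa+Y,r}(\cR)$ is realized as a restricted power series module $\cR\langle T_1/c,\dots\rangle^{\oplus m}$ (with the radii controlled by the $\tilde h_i$), and similarly for $r'$ with smaller radii. The monomials in these variables furnish simultaneously an orthogonal basis of $M_{H^pp^n,\kappa+Y,r}(\cR)$ and of its submodule $M_{H^pp^n,\kappa+Y,r'}(\cR)$: the only change in passing from $r$ to $r'$ is a rescaling of the norms of the basis elements, not the basis itself. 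Concretely, if $e_\lambda$ is the orthonormal basis of the smaller module, then $e_\lambda$ times an explicit power of the radius ratio gives the orthonormal basis of the larger one, so the two orthogonal structures are compatible. This is exactly the statement that the inclusion of the two Banach modules is ``of the type'' needed to define the Fredholm determinant independently of $r$, and it is where complete continuity of $\mathrm{Res}(r',r)$ (Proposition \ref{res}) gets upgraded to the orthogonal‑basis form.

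The main obstacle I anticipate is bookkeeping rather than conceptual: one must check that the gluing of the local trivializations over the finite affine cover is compatible with the orthogonal decompositions — i.e. that the transition matrices between overlapping charts are integral (entries of norm $\le 1$) and invertible, so that the locally defined orthogonal bases patch to a genuine orthogonal basis of the global sections, and remain so after shrinking $r$ to $r'$. This uses that $\omega^{\kappa}$ is a line-bundle-like automorphic sheaf extending across the (partial) toroidal compactification (Section \ref{weight}) together with the normality and flatness of the integral models from Section \ref{moduli2}, which guarantee that the relevant formal schemes are $\varpi$‑adically flat with reduced special fibre, so no denominators appear. Once that is in place, the lemma follows formally, and I would simply cite Coleman's original treatment (P.~425 of \cite{Co}) and the parallel argument in \cite{KL} for the remaining routine verifications.
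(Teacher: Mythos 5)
There is a genuine gap at the heart of your argument, namely in the step where the common orthogonal basis is actually produced. Choosing a finite affine formal covering on which $\omega^{\kappa}$ is free realizes the \emph{local} section modules as restricted power series modules, but the global sections $M_{H^pp^n,\kappa+Y,r}(\cR)$ are the kernel of the \v{C}ech differential between products of local sections, not a direct sum of the local pieces. Having monomial bases on each chart whose norms merely rescale when $r$ shrinks to $r'$ does not yield a basis of the \emph{global} module that is simultaneously orthogonal for both radii, and the proposed fix --- that the transition matrices are integral and invertible, so the local bases ``patch'' --- is not a valid argument: integrality of transition matrices controls unit balls chart by chart, but it does not produce a global basis at all, let alone one compatible with the two norms at once. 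Note also that existence of an orthogonal basis for each module separately is cheap (over a discretely valued base every such Banach module is orthonormalizable); the entire content of the lemma is the \emph{simultaneous} orthogonality for the nested pair, and that is exactly what the local-patching argument fails to deliver.

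The paper's proof goes a different and essentially global route, and the input it uses is the one missing from your sketch: after reducing to $\cR=\QQ_p$, $Y=0$ via $M_{H^pp^n,\kappa+Y,r}(\cR)=M_{H^pp^n,\kappa+Y,r}(\QQ_p)\hat{\otimes}_{\QQ_p}\cR$, one pushes $\omega^{\kappa}$ forward along the proper map $\bar{\fM}_{H^pp^n}^{\rm ur,rig}(r)\ra \fM_{H^p}^{*,\rm rig}(r)$ to the (projective) minimal compactification, where $\mathcal{L}=(\det\omega)^{p-1}$ is \emph{ample} and the region $\fM_{H^p}^{*,\rm rig}(r)$ is cut out by $|\tilde h|\geq r$ for the lifted Hasse invariant; then Lemma 2.4.5 of \cite{KL} applies. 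That lemma is proved precisely by exploiting ampleness: sections over the locus $|\tilde h|\geq r$ are expanded in powers of the lifted Hasse invariant with coefficients in finitely generated modules of sections of $\mathcal{F}\otimes\mathcal{L}^{\otimes i}$ over the integral model, and it is this graded expansion (not a chart-by-chart trivialization) that produces one basis orthogonal for every $r$. Your outline never invokes ampleness or the minimal compactification, and without them the statement does not follow; if you want to repair the proposal, replace the local patching step by the pushforward-to-$\fM_{H^p}^{*,\rm rig}(r)$ argument and cite 2.4.5 of \cite{KL} for the graded expansion.
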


\begin{proof}
Since
$M_{H^pp^n,\kappa+Y, r}(\mathcal{R})=M_{H^pp^n,\kappa+Y, r}(\QQ_p)\hat{\otimes}_{\QQ_p}\mathcal{R}$,
we may assume $\mathcal{R}=\QQ_p$ and $Y=0$. Recall that, as $r$ is
sufficiently close to $1$, the natural map $\fM_{H^pp^n}^{\rm
*,ur,rig}(r) \ra \fM_{H^p}^{*,\rm rig}(r)$ is finite \'{e}tale. We can
conclude the proof by 2.4.5 \cite{KL}, namely in the notation of {\it loc. cit.} we let $\mathcal{F}$ be the push-forward of $\omega^{\kappa}$
under the composition (recall the notation from Section \ref{igusa}) \[\bar{\fM}_{H^pp^n}^{\rm ur,rig}(r) \ra \fM_{H^pp^n}^{\rm
*,ur,rig}(r) \ra \fM_{H^p}^{*,\rm rig}(r)\]which is proper. Moreover, we have the line bundle
$\mathcal{L}=(\det\omega)^{p-1}$ which is ample over $\cM_{H^p}^*$, and $\mathcal{D}\subset
(\cM_{H^p}^{*})_{\mathbf{F}_p}$ the divisor where the Hasse invariant $h$ vanishes. Then 2.4.5 \cite{KL} gives the result we require. \end{proof}

Set \[\mathcal{E}_{\kappa}=\mathcal{E}_{\kappa}^{\rm red}\subset X_{\kappa}\] to be the nilreduction of the 
Zariski-closed subspace of $X_{\kappa}$ cut out by the ideal
generated by the functions $P_{\alpha}((x_p\iota(\alpha))^{-1})$ for
all the $\alpha\in \mathcal{H}$ such that $\iota(\alpha)$ is a unit. Alternatively, we can define $\mathcal{E}_{\kappa}$ as follows. 

The entire series associated to $\alpha\in \mathcal{H}$, $P_{\alpha}(T)\in \cR[[T]]$, defines a closed  subspace \[\mathcal{Z}_{\alpha}\subset \Sp\cR\times \mathbf{A}^1,\] where $T$ is regarded as the co-ordinate on $\mathbf{A}^1$. For each $\alpha\in \mathcal{H}$ such that $\iota(\alpha)$ is unit, we can define the map 

\[r_{\alpha}: X_{\kappa}=Z_{\kappa}\times \mathbf{G}_m\ra \Sp\cR\times \mathbf{A}^1: x=(z, s)\mapsto (\underline{w}(z), \frac{s}{(\iota(\alpha))(x)}),\]where we have regarded $\iota(\alpha)$ as a function on $X_{\kappa}$.  Then we set $\mathcal{E}_{\kappa}$ to be the nilreduction of 

\[\bigcap_{ \substack{\alpha\in \mathcal{H}, \\  \iota(\alpha)\in \cO(X_{\kappa})^{\times} } } r_{\alpha}^{-1}(\mathcal{Z}_{\alpha}).\]

The following theorem is obtained from the above construction
formally, as in \cite{CM}.

\begin{theo}\label{curve}

(1) Let $E\subset \CC_p$ be a closed subfield containing $L$. For an
$E$-valued point $x\in \mathcal{E}_{\kappa}(E)$, there is a non-zero
simultaneous eigenvector $f_x\in
M_{H^pp^n,\kappa+Y(x)}^{\dag}(E)$ for all the Hecke operators in
$\TT_{H^pp^n,\kappa+Y}^{\dag}$ such that the Hecke eigenvalues $\lambda_{T_{v,i}}(z),\lambda_{S_v}(x),\lambda_{U_{(p)}}(x)$ for the operators $T_{v,i},S_v$ and $U_{(p)}$ satisfy: 
\[\lambda_{T_{v,i}}(x)= t_{v,i}(x),\]
\[\lambda_{S_v}(x)=s_v(x), \]
\[\lambda_{U_{(p)}}(x)=x_{p}(x).\]For a fixed $Y_0\in E$ with $v_p(Y_0-1)>\frac{1}{p-1}$, the above
assignment induces a bijection between the points $\{x\in
\mathcal{E}_{\kappa}(E)\}_{Y(x)=Y_0}$ and systems of
$\TT_{H^pp^n,\kappa+Y}^{\dag}$-eigenvalues of an eigenvector
$f\in M_{H^pp^n,\kappa+Y_0}^{\dag}(E)$ of \emph{finite slope} at $p$.

(2) The rigid analytic space $\mathcal{E}_{\kappa}$ is a curve.
The weight map $\underline{w}: \mathcal{E}_{\kappa}\ra \mathcal{W}_{\kappa}$ is,
locally in the domain, finite flat. The image of any component of
$\mathcal{E}_{\kappa}$ under this map misses at most finitely many points in
$\mathcal{W}_{\kappa}$.

\end{theo}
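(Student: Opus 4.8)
The plan is to apply, essentially verbatim, the Coleman--Mazur construction of \cite{CM} (together with the adaptations in \cite{KL}) to the Banach $\cR$-module $M:=M_{H^pp^n,\kappa+Y,r}(\cR)$, the completely continuous operator $U_{(p)}$ of Corollary~\ref{up}, and the commuting family of Hecke operators, which generate a \emph{commutative} $\cR$-algebra by Proposition~\ref{comm}. Everything is carried out over a fixed affinoid $\Sp\cR\subset\mathcal{W}_\kappa$ (and glued over an admissible cover of $\mathcal{W}_\kappa$ at the end), and is independent of $r$ close to $1$ by Lemma~\ref{independ}.

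For part (1), I would first invoke the Fredholm--Riesz spectral theory over $\cR$: the Fredholm series $P(T)=\det_{\cR}(1-U_{(p)}T\,|\,M)$ defines the spectral variety $\cZ\subset\mathcal{W}_\kappa\times\mathbf{A}^1$, which is covered by ``slope-adapted'' affinoids $\Sp\cR'$ over which $P=Q\cdot S$ with $Q$ a polynomial with unit leading coefficient and $(Q,S)=1$; this yields a $U_{(p)}$-stable decomposition $M|_{\cR'}=M_Q\oplus M_S$ with $M_Q$ finite projective over $\cR'$ and $U_{(p)}$ invertible on $M_Q$. Letting $\TT_Q$ be the image of the commutative Hecke algebra together with $U_{(p)}^{\pm1}$ in $\End_{\cR'}(M_Q)$ --- a finite $\cR'$-algebra --- the affinoids $\Sp\TT_Q$ glue, after the prescribed nilreduction, to $\cE_\kappa$, and one checks this matches the two descriptions in the text, the point being that the equations $P_\alpha((x_p\iota(\alpha))^{-1})=0$ cut out precisely the pairs (weight, reciprocal $U_{(p)}$-eigenvalue) that occur. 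For $x\in\cE_\kappa(E)$ over a weight $w$ with $Y(w)=Y(x)$, vanishing of these equations says that $s:=x_p(x)\ne 0$ is an eigenvalue of $U_{(p)}$ on $M_Q\otimes_{\cR',w}E$; the point $x$ is a maximal ideal of $\TT_Q\otimes_w E$ with nonzero generalized eigenspace in $M_Q\otimes_w E$, and since $\TT_Q\otimes_w E$ is commutative Artinian, that generalized eigenspace contains a genuine simultaneous eigenvector $f_x$, of finite slope because $s\ne 0$, with the asserted eigenvalues. The bijection for fixed $Y_0$ is then formal: a point of $\cE_\kappa$ is determined by, and determines, its image in $Z_\kappa\times\GG_m$, i.e.\ the full system of Hecke eigenvalues including that of $U_{(p)}$; conversely a finite-slope eigenform in $M_{H^pp^n,\kappa+Y_0}^{\dag}(E)$ lies in some $M_{H^pp^n,\kappa+Y_0,r}(E)$ and in a slope-bounded subspace, hence produces such a point. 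Here one uses that $\mathcal{W}_\kappa$ is the open disc coordinatized by $Y$ --- the hypothesis $v_p(1-\tau(t))>\tfrac1{p-1}$ forces $\tau$ trivial on the torsion subgroup of $\ZZ_p^\times$ --- so that $Y_0$ pins down the weight.

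For part (2): each $\Sp\TT_Q$ is finite over an affinoid subdomain $\Sp\cR'$ of the smooth one-dimensional disc $\mathcal{W}_\kappa$, so $\dim\cE_\kappa\le 1$. The natural finite map $\cE_\kappa\to\cZ$ is surjective onto the spectral variety, and $\cZ$ is a Fredholm hypersurface in $\mathcal{W}_\kappa\times\mathbf{A}^1$ --- cut out by an entire series with unit constant term --- hence, by the analysis of Fredholm hypersurfaces over a disc in \cite{CM}, pure of dimension $1$ with no component contained in a fibre over $\mathcal{W}_\kappa$. Thus $\cE_\kappa$ is equidimensional of dimension $1$, i.e.\ a curve, and every component dominates $\mathcal{W}_\kappa$. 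Since $\cR'$ is reduced and regular of dimension $1$, the finite torsion-free $\cR'$-algebra $\TT_Q\subset\End_{\cR'}(M_Q)$ is locally free, hence flat over $\cR'$; as the $\Sp\TT_Q$ form an admissible cover of $\cE_\kappa$, the weight map $\underline{w}\colon\cE_\kappa\to\mathcal{W}_\kappa$ is, locally in the domain, finite flat. Finally, that the image of a component misses only finitely many points of $\mathcal{W}_\kappa$ follows, exactly as in \cite{CM}, from the structure of $\cZ\to\mathcal{W}_\kappa$ together with the closedness of finite morphisms over each affinoid chart.

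I expect the only step needing genuine care --- the closest thing to an obstacle --- to be the family version of the spectral theory: ensuring that the slope decompositions $M|_{\cR'}=M_Q\oplus M_S$ exist over $\cR$, are compatible with specialization, and that the resulting Fredholm hypersurface is equidimensional over $\mathcal{W}_\kappa$ (this last being what forces the weight map to be flat). All of it is supplied by the machinery of \cite{CM} (and \cite{KL}) applied directly, the hypotheses to be verified being only the complete continuity of $U_{(p)}$ (Corollary~\ref{up}) and the commutativity of the Hecke algebra (Proposition~\ref{comm}), both already established; the remaining checks are routine.
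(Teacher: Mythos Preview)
Your proposal is correct and matches the paper's approach exactly: the paper gives no proof at all for this theorem beyond the sentence ``The following theorem is obtained from the above construction formally, as in \cite{CM},'' so your elaboration of how the Coleman--Mazur machinery (slope decompositions, Fredholm hypersurfaces, finite Hecke algebras $\TT_Q$ over affinoid charts) applies, with the inputs being Corollary~\ref{up} and Proposition~\ref{comm}, is precisely what is intended.
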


The following theorem plays a similar (yet weaker) role as the expected result that classical Siegel-Hilbert  eigenforms are Zariski dense in the rigid
analytic space $\mathcal{E}_{\kappa}$.

\begin{theo}\label{accum}
Let $f$ be a classical Siegel-Hilbert modular eigenform of weight $\kappa$ and
of level $H^pp^n$. There exists, for any positive integer $t$ with $v_p(t)$ large enough, a Siegel-Hilbert modular
eigenform $f_t$ of weight $\kappa\cdot \mathrm{Nm}^{(p-1)k_0t}$ and of the same level, such
that the Hecke eigenvalues on the $f_t$'s converge $p$-adically to that of
$f$, as $v_p(t)\ra +\infty$. Furthermore, if $f$ is cuspidal, then the $f_t$ can also be taken to be cuspidal.
\end{theo}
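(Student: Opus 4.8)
The plan is to deduce Theorem \ref{accum} from Theorem \ref{curve} together with the comparison of overconvergent and classical forms, i.e. to realize the classical eigenform $f$ as a point of $\mathcal{E}_{\kappa}$ and then to move along a component of the curve towards $f$ through the dense set of integral weights. First I would observe that $f$, being a classical Siegel-Hilbert eigenform of level $H^pp^n$ and weight $\kappa$, is in particular an overconvergent form in $M_{H^pp^n,\kappa}^{\dag}(L')$ for a suitable finite extension $L'/L$; moreover, after possibly passing to a $p$-stabilization, we may assume it is of finite slope at $p$, i.e. its $U_{(p)}$-eigenvalue is a unit (this is where the hypothesis that $f$ has level $p^n$ at $p$ is used). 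By part (1) of Theorem \ref{curve} applied with $Y_0$ the value corresponding to the weight $\kappa$ (namely $Y_0 = 1$, so that $\kappa + Y_0$ recovers $\kappa$ up to the identification in Proposition \ref{sep}), the system of Hecke eigenvalues of $f$ corresponds to a point $x_0 \in \mathcal{E}_{\kappa}(L')$ with $\underline{w}(x_0)$ the weight of $\kappa$.

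Next I would exploit part (2) of Theorem \ref{curve}: the weight map $\underline{w}\colon \mathcal{E}_{\kappa} \ra \mathcal{W}_{\kappa}$ is, locally in the domain, finite flat, and the image of the component $C$ of $\mathcal{E}_{\kappa}$ through $x_0$ misses only finitely many points of $\mathcal{W}_{\kappa}$. Recall from the proof of Proposition \ref{comm} that the integral weights $\kappa \cdot \mathrm{Nm}^{(p-1)k_0 t}$ for $t$ a large power of $p$ form a Zariski dense subset $\mathcal{W}_{\kappa}^{\rm cl}$ of the relevant neighbourhood; in particular, as $v_p(t) \to \infty$ these weights accumulate at the weight of $\kappa$. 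By local finite flatness, I can choose an affinoid neighbourhood $\Sp\cR$ of $\underline{w}(x_0)$ over which $\underline{w}^{-1}(\Sp\cR) \cap C$ is finite flat over $\Sp\cR$; shrinking $\Sp\cR$ if necessary, and using that the image of $C$ omits only finitely many weights, for all but finitely many $t$ with $v_p(t)$ large the weight $w_t := \kappa \cdot \mathrm{Nm}^{(p-1)k_0 t}$ lies in $\Sp\cR$ and has a preimage $x_t \in C(\bar{L})$. Since the fibre over $w_t$ is a single $\cR$-point-scheme limiting set-theoretically onto the fibre over $\underline{w}(x_0)$, and $x_0$ is in that fibre, one can select the $x_t$ so that $x_t \to x_0$ $p$-adically as $v_p(t) \to \infty$; this uses the continuity of the branches of a finite flat cover and a standard compactness argument on the affinoid $C \cap \underline{w}^{-1}(\Sp\cR)$.

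Then I would translate the limit $x_t \to x_0$ into the assertion of the theorem. For each such $x_t$, part (1) of Theorem \ref{curve} produces a non-zero eigenvector $f_{x_t} \in M_{H^pp^n, \kappa + Y(x_t)}^{\dag}(\bar{L})$; by Proposition \ref{sep} (the commutative diagram (\ref{classical}), using $\cdot E^t$), since $w_t$ is an integral weight with $Y(x_t) = (p-1)k_0 t$, this overconvergent form at integral weight $w_t$ is, up to multiplication by the non-vanishing section $E^t$, actually \emph{classical} — i.e. it descends to a classical Siegel-Hilbert eigenform $f_t$ of weight $w_t = \kappa \cdot \mathrm{Nm}^{(p-1)k_0 t}$ and level $H^pp^n$. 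Here I am invoking the classicality of finite-slope (indeed ordinary, coming from $U_{(p)}$ a unit) overconvergent forms of integral weight, which is the standard small-slope/Coleman-type classicality theorem in this Siegel-Hilbert setting; alternatively one simply notes that the ordinary classical and ordinary overconvergent Hecke algebras at an integral weight agree. The Hecke eigenvalues of $f_t$ are $t_{v,i}(x_t), s_v(x_t), x_p(x_t)$, which by continuity of these rigid-analytic functions on $C$ converge to $t_{v,i}(x_0), s_v(x_0), x_p(x_0)$, i.e. to the Hecke eigenvalues of $f$, as $v_p(t)\to \infty$. Finally, for the cuspidality statement: the construction of $\mathcal{E}_{\kappa}$ can be run verbatim with cusp forms in place of all modular forms (replace $H^0$ of $\omega^{\kappa}$ by $H^0$ of the cuspidal subsheaf, which is still acted on completely continuously by $U_{(p)}$ since it is a closed sub-Banach-module stable under the Hecke correspondences), yielding a closed subcurve $\mathcal{E}_{\kappa}^{\rm cusp} \subset \mathcal{E}_{\kappa}$ containing $x_0$ whenever $f$ is cuspidal; applying the above argument inside $\mathcal{E}_{\kappa}^{\rm cusp}$ gives cuspidal $f_t$.

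The main obstacle I expect is the \emph{classicality} step — ensuring that the overconvergent eigenform $f_{x_t}$ at the integral weight $w_t$ produced by the eigencurve is genuinely classical. Because the $x_t$ lie on the component $C$ through the ordinary point $x_0$, their $U_{(p)}$-eigenvalues are $p$-adic units (slope zero), so this should follow from an ordinary-classicality statement; but making this precise in the Siegel-Hilbert generality requires either a control theorem à la Hida or a Coleman-style analytic continuation argument adapted to $\vec{\cM}_{H^p}^{\rm ord,tor}$, and one must be careful that it is available for \emph{all} sufficiently divisible $t$, not merely generically. A secondary technical point is extracting the convergence $x_t \to x_0$ from local finite flatness of $\underline{w}$: one must argue that the chosen branch is the one specializing to $x_0$, which is straightforward once $\Sp\cR$ is taken small enough that $C \cap \underline{w}^{-1}(\Sp\cR)$ is a disjoint union of connected affinoids each finite flat over $\Sp\cR$ and $x_0$ lies on exactly one of them.
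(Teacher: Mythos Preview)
Your proposal differs from the paper's proof in a crucial way, and the difference is exactly the obstacle you flag at the end: you rely on a \emph{classicality theorem} (that finite-slope overconvergent Siegel--Hilbert eigenforms at integral weight are classical), whereas the paper's argument is designed precisely to \emph{avoid} any such input. In the Siegel--Hilbert generality of this paper (arbitrary totally real $F$, arbitrary $g$), a small-slope classicality theorem was not available; indeed the introduction emphasizes that the eigenvariety of \cite{AIP} only handles $F=\QQ$. Your appeal to Proposition~\ref{sep} does not help here: that diagram compares two spaces of \emph{overconvergent} forms ($M^{\dag}_{\kappa}$ and $M^{\dag}_{\kappa\cdot\mathrm{Nm}^{(p-1)k_0t}}$) via multiplication by $E^t$, it does not assert that either side is classical. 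So the step ``$f_{x_t}$ is, up to $E^t$, actually classical'' is unjustified. (There is also a terminological slip: finite slope means the $U_{(p)}$-eigenvalue is non-zero, not that it is a unit; you are not entitled to assume $x_0$ is ordinary, and hence not entitled to assume the $x_t$ are ordinary either.)

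The paper's route goes in the \emph{opposite} direction: rather than producing overconvergent eigenforms on $\mathcal{E}_\kappa$ and then proving them classical, it starts from the manifestly \emph{classical} form $E^t\cdot f$ (a product of classical sections of automorphic bundles on the compactified Shimura variety), writes it as a finite sum of classical eigenforms, and then uses an explicit idempotent projector
\[
e=\lim_{n\to\infty}\left(\frac{\iota(\alpha)U_{(p)}}{\lambda_0}\right)^{n!}\frac{F(\iota(\alpha)U_{(p)})}{F(\lambda_0)^{-1}}
\]
built from the Fredholm factorization on the spectral curve $\mathcal{Z}_\alpha$ to single out an eigen-constituent $f_t$ whose associated point $x_t$ lies on the component through $x_0$. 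Non-vanishing of $e(E^t\cdot f)$ for $v_p(t)\gg0$ is what Proposition~\ref{sep} is actually used for (continuity in the family parameter), and convergence $x_t\to x$ then follows from the finiteness of $r_\alpha$ together with the reduction to $r_\alpha^{-1}(r_\alpha(x))=\{x\}$. Cuspidality comes for free because if $f$ is cuspidal then $E^t\cdot f$ is cuspidal, hence so are all its eigen-constituents. This is the argument of \cite[4.5.6]{KL}, and its virtue is exactly that no control theorem or analytic continuation of overconvergent forms is needed.
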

\begin{proof}The proof is completely similar to that of 4.5.6  \cite{KL}.

As before, we take the weight space $\mathcal{W}_{\kappa}$ centered in $\kappa$ to be $\Sp \cR$ since the construction is local. By the construction of $\mathcal{E}_{\kappa}$, when $\iota(\alpha)$ is  unit, we have a map 
\[r_{\alpha}: \mathcal{E}_{\kappa}\lra \mathcal{Z}_{\alpha}.\]
By the method of Chapter 7 \cite{CM}, we see the projection  $r_{\alpha}$ is finite. 

Let $x\in \mathcal{E}_{\kappa}(L)$ be the point corresponding to $f$. By the argument of 6.2.2 and 6.3.2 \cite{CM} we may assume 
\numberwithin{equation}{subsection} \begin{equation}\label{locfin}
r_{\alpha}^{-1}(r_{\alpha}(x))=\{x\}.\end{equation}By this property, we only need to find a family of elements in $\mathcal{Z}_{\alpha}(L)$ converging to $r_{\alpha}(x):=x_0$.

Let $w\in \Sp \cR$ denote the weight of $x$. Let $x_1,\cdots, x_r$ be the points in $\cZ_{\alpha}$ which lie over the weight $w$ and correspond to other (finitely many by 1.3.7 \cite{CM}) roots of $P_{\alpha}(T)_w\in L[T]$, the specialization  by $w$ of $P_{\alpha}(T)\in \cR[[T]]$.  The $\iota(\alpha)U_{(p)}$-eigenvalue of $x_i$ ($0\leq i\leq r$) is denoted by $\lambda_i$.   By the (local) finite flatness of the weight map $\underline{w}$ (shrinking $\Sp\cR$ if necessary) we may assume there are disjoint connected components $\{\cZ_i\}_{i=0,\cdots,r}$ of $\mathcal{Z}_{\alpha}$, such that for any $0\leq i\leq r$,  

\begin{itemize}
\item $x_i$ is the only point in $\cZ_i$ among the points $\{x_0,\cdots,x_r\}$.
\item  $T/\lambda_0$ is topologically unipotent on $\cZ_0$, and is topologically nilpotent on  $\cZ_i$ for any $i\geq 1$. 
\item  $\cZ_i$ is finite over $\Sp\cR$. 
 \end{itemize}

Thus $\bigcup_{i=1}^{r} \cZ_i$ is finite flat over $\Sp \cR$, hence corresponds to a polynomial $F(T)\in \cR[[T]]$ dividing $P_{\alpha}(T)$. By the construction, we have the following well-defined idempotent operator 
\[e=\lim_{n\ra \infty}\left(\frac{\iota(\alpha) U_{(p)}}{\lambda_0}\right)^{n!}\frac{F(\alpha U_{(p)})}{F(\lambda_0)^{-1}}, \] which is easily checked to be the  identity on a point in $\cZ_0$ and kill any points in  $\cZ_i$ for $i\geq 1$. 

Consider the integers $t$ such that $Y^{-1}(Y(x)+(p-1)k_0t)\in \Sp \cR$. We form the  Siegel-Hilbert modular eigenform of level $H^pp^n$ and weight $\kappa+(p-1)k_0t$:

\[g_t=e(E^t\cdot f)=e(E^t\cdot g_0).\]

By Proposition \ref{sep} (applying the first row of diagram (\ref{classical}) to $f$) and the continuity of the Hecke action on $M_{H^pp^n,\kappa+Y}^{\dag}(\cR)$, we have that 
\[g_t\neq 0,  \text{ if } v_p(t)\gg 0.\]We can write $E^t\cdot g_0$ as a finite sum of classical eigenforms. If $f$ is cuspidal to begin with, then $E^t\cdot g_0$ can be written as a finite sum of cuspidal eigenforms. Pick one of them so that the associated point $x_t\in \mathcal{E}_{\kappa}$ has image in $\cZ_0$ under the projection $r_{\alpha}$. By this construction, the point $x$ is the limit of $r_{\alpha}(x_t)$  when $t$ goes to $0$ $p$-adically. Now by (\ref{locfin}), one has that $x$ is the limit of $x_t$. Let $f_t$ be the classical Siegel-Hilbert modular form corresponding to $x_t$. This finally concludes the proof.  
\end{proof}

\subsection{Complement}\label{des}

In this final subsection we give a complement to theorem 4.15, which is needed for the application \cite{Mok}. 

Thus let $v$ be a prime of $\mathcal{O}$ with $v \nmid p$. Fix a Bernstein component $\mathcal{B}_v$ of $\GSp_{2g}(F_v)$. Recall that $\mathcal{B}_v$ is given by the (equivalence class of) data given by a pair $(M,\tau)$, where $M$ is a Levi subgroup of $\GSp_{2g/F_v}$, and $\tau$ is a supercuspidal representation of $M(F_v)$, up to twisting by unramified characters of $M(F_v)$. Let $E$ be a number field over which $\mathcal{B}_v$ is defined, and denote by $\mathfrak{z}_v = E[\mathcal{B}_v]$ the affine coordinate ring of $\mathcal{B}_v$, which is known as the Bernstein centre of $\mathcal{B}_v$. We have an idempotent element $e_v \in \mathfrak{z}_v$, such that for any irreducible admissible representation $\pi_v$ of $\GSp_{2g}(F_v)$, we have $\pi_v$ belongs to the component $\mathcal{B}_v$ if and only if $e_v \cdot \pi_v \neq 0$. 

Now we come back to the context of the previous subsections. Let $l$ be the rational prime below the prime $v$, and let $m$ be the exact power such that $l^m$ divides $N$. We assume that $n$ is sufficiently large, so that the following holds: denoting by $K_v(m)$ the principal congruence subgroup at the prime $v$ of level $n$, with associated idempotent $e_{K_v(m)}$. Then we assume that $n$ is large enough so that
\[
e_{K_v(m)} \cdot e_v =e_v
\]  	 
with $e_v$ being the idempotent associated to the Bernstein component $\mathcal{B}_v$ as above. We may also assume that the extension $L/\mathbf{Q}_p$ in the last subsection to be large enough to contain the number field $E$.

We note that in the particular case where $\mathcal{B}_v$ is the Iwahori component associated to the (standard) Iwahori subgroup $I_v \subset \GSp_{2g}(F_v)$, then for any $\pi_v$ belonging to $\mathcal{B}_v$, we have $U_{(p)}$ acts invertibly on $\pi_v^{I_v}$ ({\it c.f.} section 6.4.1 of \cite{BC}).

Back to the fixed Bernstein component $\mathcal{B}_v$ as above. The Bernstein centre $\mathfrak{z}_v$ acts on the space of overconvergent Siegel-Hilbert modular forms $M^{\dag}_{H^pp^n,\kappa+Y}(\mathcal{R})$. Indeed by the theory of Bernstein centre it suffices to see that the local Hecke algebra of $\GSp_{2g}(F_v)$ with respect to the congruence subgroup $K_v(m)$ acts on $M^{\dag}_{H^pp^n,\kappa+Y}(\mathcal{R})$. Since $v \nmid p$, this follows by a similar argument as in section 4.2.  
	 
As in chapter 7 of \cite{BC}, we can then form the space of overconvergent Siegel-Hilbert modular forms associated to the idempotent $e_v$:
\[
e_v M^{\dag}_{H^pp^n,\kappa+Y}(\mathcal{R}).
\]

Then the same argument as in the proof of theorem 4.15, but with the constructions applied to the space $e_v M^{\dag}_{H^pp^n,\kappa+Y}(\mathcal{R})$, yields the following: 	 
	 
\begin{theo}\label{accum}
Let $f$ be a classical cuspidal Siegel-Hilbert modular eigenform of weight $\kappa$ and
of level $H^pp^n$. Let $\pi$ be the cuspidal automorphic representation of $\GSp_{2g}(\mathbf{A}_F)$ generated by $f$, and assume that $\pi_v$ belongs to $\mathcal{B}_v$. There exists, for any positive integer $t$ with $v_p(t)$ large enough, a Siegel-Hilbert cuspidal
eigenform $f_t$ of weight $\kappa\cdot \mathrm{Nm}^{(p-1)k_0t}$ and of the same level, such
that the Hecke eigenvalues on the $f_t$'s converge $p$-adically to that of
$f$, as $v_p(t)\ra +\infty$. Furthermore the $f_t$ can be taken to have the following property: denoting by $\pi_t$ the cuspidal automorphic representation of $\GSp_{2g}(\mathbf{A}_F)$ generated by $f_t$. Then $\pi_{t,v}$ belongs to $\mathcal{B}_v$.
\end{theo}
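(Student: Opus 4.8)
The plan is to mirror the proof of Theorem \ref{accum} (the first one, proved above), inserting the Bernstein idempotent $e_v$ throughout so that the entire construction takes place inside the cut-out subspace $e_v M^{\dag}_{H^pp^n,\kappa+Y}(\mathcal{R})$. First I would record that $e_v$ commutes with all the Hecke operators away from $v$ (in particular with $U_{(p)}$, $T_{w,i}$, $S_w$ for $w \neq v$, and with the operator $\iota(\alpha)$ for $\alpha \in \mathcal{H}$), since $e_v$ lies in the local Hecke algebra at $v$ and the operators listed are supported at places different from $v$; hence $e_v M^{\dag}_{H^pp^n,\kappa+Y,r}(\mathcal{R})$ is a $U_{(p)}$-stable closed Banach submodule of $M^{\dag}_{H^pp^n,\kappa+Y,r}(\mathcal{R})$, and $U_{(p)}$ (indeed $\iota(\alpha)U_{(p)}$) remains completely continuous on it by Corollary \ref{up} together with the fact that a completely continuous operator restricted to a closed invariant submodule is completely continuous. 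One then repeats the Coleman--Mazur machinery verbatim on this submodule to obtain a rigid curve $\mathcal{E}_{\kappa,v}$ (with its Fredholm determinants $P_\alpha^{e_v}(T)$, the spaces $\mathcal{Z}_\alpha^{e_v}$, and the maps $r_\alpha$), equipped with a locally finite-flat weight map, exactly as in Theorem \ref{curve}.

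Next I would run the limiting argument of the first Theorem \ref{accum} inside $e_v M^{\dag}$. Starting from the classical cuspidal eigenform $f$ with $\pi_v \in \mathcal{B}_v$, one has by hypothesis $e_v \cdot f \neq 0$, so $f$ defines a point $x \in \mathcal{E}_{\kappa,v}(L)$; after shrinking $\Sp\mathcal{R}$ and using the arguments of 6.2.2, 6.3.2 of \cite{CM} one may assume $r_\alpha^{-1}(r_\alpha(x)) = \{x\}$. Build the idempotent $e = \lim_n (\iota(\alpha)U_{(p)}/\lambda_0)^{n!} F(\iota(\alpha)U_{(p)})/F(\lambda_0)^{-1}$ on $e_v M^{\dag}_{H^pp^n,\kappa+Y}(\mathcal{R})$ as before, and set $g_t = e(E^t \cdot f)$. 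The key point is that $E$ is the pullback of a Hilbert modular form of parallel weight, hence fixed by every local Hecke operator, in particular unaffected by $e_v$; so multiplication by $E^t$ preserves the $e_v$-part, and $g_t \in e_v M^{\dag}_{H^pp^n,\kappa+(p-1)k_0 t}(\mathcal{R})$. By Proposition \ref{sep} and continuity of the Hecke action, $g_t \neq 0$ for $v_p(t) \gg 0$; decompose $E^t g_0$ into classical cuspidal eigenforms and select a summand $f_t$ whose point $x_t$ lies in the component $\mathcal{Z}_0$, so that $r_\alpha(x_t) \to r_\alpha(x)$ and hence $x_t \to x$ by local injectivity of $r_\alpha$. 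Since $f_t$ is by construction a vector in $e_v M^{\dag}$, its generated local representation $\pi_{t,v}$ satisfies $e_v \cdot \pi_{t,v} \neq 0$, which is exactly the statement $\pi_{t,v} \in \mathcal{B}_v$.

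The main obstacle, and the only place where genuine work beyond bookkeeping is needed, is verifying that the space $e_v M^{\dag}_{H^pp^n,\kappa+Y}(\mathcal{R})$ really is stable under the full finite-slope formalism and that decomposing a classical form lying in it into eigenforms keeps all eigen-summands in the $e_v$-part. The first is handled by the commutation of $e_v$ with operators away from $v$ noted above, combined with the remark in Section \ref{des} that the local Hecke algebra at $v$ acts on $M^{\dag}_{H^pp^n,\kappa+Y}(\mathcal{R})$ by the same argument as in Section 4.2 (since $v \nmid p$, the Hecke correspondences at $v$ are defined on the rigid moduli spaces over $\mathcal{W}_\kappa$ just as the spherical ones were). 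The second is automatic: $e_v$ is an idempotent in the commutative subalgebra acting on $M^{\dag}$ generated by Hecke operators away from $v$, so it acts as $0$ or $1$ on each Hecke eigenspace; thus a classical eigenform summand $f_t$ of $E^t g_0$ lies in $e_v M^{\dag}$ if and only if $e_v f_t = f_t$, and the full sum $E^t g_0 = e(E^t f)$ being in $e_v M^{\dag}$ forces at least one — indeed every nonzero — eigen-summand to satisfy $e_v f_t = f_t$, giving $\pi_{t,v} \in \mathcal{B}_v$. One should also invoke the remark that in the Iwahori case $U_{(p)}$ acts invertibly on $\pi_v^{I_v}$ (c.f. 6.4.1 of \cite{BC}) to ensure the finite-slope hypothesis is not vacuous when $\mathcal{B}_v$ is the Iwahori component; for a general $\mathcal{B}_v$ this is not needed since $f$ and the $f_t$ already carry a fixed finite slope at $p$ inherited from the classicality of $f$.
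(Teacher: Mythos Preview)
Your approach is correct and is precisely what the paper does: its proof of this second Theorem \ref{accum} consists solely of the remark that one repeats the argument of the first Theorem \ref{accum} with all constructions carried out in $e_v M^{\dag}_{H^pp^n,\kappa+Y}(\mathcal{R})$, and you have simply spelled out those details. Two small slips worth fixing: $e_v$ lies in the Bernstein center \emph{at} $v$ (not in the algebra generated by operators away from $v$, though it does commute with that algebra, which is what you use), and the compatibility of multiplication by $E^t$ with $e_v$ is not because $E$ is literally fixed by local Hecke operators but because Proposition \ref{sep}, applied now with the local Hecke algebra at $v$ acting on the family (as the paper sets up in Section \ref{des}), shows that the twisted $v$-Hecke action specializes correctly under $\psi_t$ followed by $\cdot\, E^t$.
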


\bigskip	 
	  
\textit{Acknowledgement}
\medskip

We are grateful to Kiran Kedlaya,  Kai-Wen Lan and George Pappas for helpful discussions. The second-named author was supported through the program ``Oberwolfach Leibniz Fellows'' by the Mathematisches Forschungsinstitut Oberwolfach in 2011, where an early draft of the paper was written.


\vspace{\baselineskip}

Chung Pang Mok

Department of Mathematics and Statistics

McMaster University 

Hamilton, Ontario L8S 4K1

Canada

cpmok@math.mcmaster.ca

\vspace{\baselineskip}
 \vspace{\baselineskip}

Fucheng Tan

Department of Mathematics

Michigan State University

East Lansing, Michigan 48824

USA

ftan@math.msu.edu

\end{document}